\documentclass[twocolumn,english]{IEEEtran}
\usepackage[T1]{fontenc}
\usepackage{babel}
\usepackage{calc}
\usepackage{textcomp}
\usepackage{amsmath}
\usepackage{amsthm}
\usepackage{amssymb}
\usepackage{graphicx}
\usepackage[numbers]{natbib}
\usepackage[unicode=true,
 bookmarks=true,bookmarksnumbered=true,bookmarksopen=true,bookmarksopenlevel=1,
 breaklinks=false,pdfborder={0 0 0},pdfborderstyle={},backref=false,colorlinks=false]
 {hyperref}
\hypersetup{pdftitle={Your Title},
 pdfauthor={Your Name},
 pdfpagelayout=OneColumn, pdfnewwindow=true, pdfstartview=XYZ, plainpages=false}

\makeatletter

\newcommand{\lyxmathsym}[1]{\ifmmode\begingroup\def\b@ld{bold}
  \text{\ifx\math@version\b@ld\bfseries\fi#1}\endgroup\else#1\fi}

\theoremstyle{plain}
\newtheorem{thm}{\protect\theoremname}
\theoremstyle{plain}
\newtheorem{lem}[thm]{\protect\lemmaname}
\theoremstyle{remark}
\newtheorem{rem}[thm]{\protect\remarkname}
\theoremstyle{definition}
\newtheorem{example}[thm]{\protect\examplename}

\usepackage{algorithmic}
\usepackage[bb=boondox]{mathalfa}
\makeatletter
\def\thmhead@plain#1#2#3{%
  \thmname{#1}\thmnumber{\@ifnotempty{#1}{ }\@upn{#2}}%
  \thmnote{ {\the\thm@notefont#3}}}
\let\thmhead\thmhead@plain
\makeatother
\usepackage{xcolor}

\usepackage{subfig}
\usepackage{babel}
\makeatletter

\usepackage{array}
\usepackage{boldline}
\usepackage{multirow}
\usepackage{multicol}

\makeatother

\providecommand{\examplename}{Example}
\providecommand{\lemmaname}{Lemma}
\providecommand{\remarkname}{Remark}
\providecommand{\theoremname}{Theorem}

\begin{document}
\title{On the Closed-Form Solution for Robust Adaptive Beamforming }
\author{Licheng~Zhao, Rui~Zhou, and Wenqiang~Pu\thanks{Licheng Zhao, Rui Zhou, and Wenqiang Pu are with Shenzhen Research
Institute of Big Data, Shenzhen 518172, China (email: \{zhaolicheng,
rui.zhou, wpu\}@sribd.cn). }}
\maketitle
\begin{abstract}
In this paper, we revisit the classical robust adaptive beamforming
(RAB) problem. Due to steering vector mismatch, a robustness guarantee
is highly necessary in the problem formulation. The worst-case-constrained
approach models the mismatch as a spherically bounded vector, but
suffers from desired signal cancellation in the high input signal-to-noise
ratio (SNR) region. The reconstruction-based approach estimates the
interference-plus-noise covariance matrix from the snapshot data and
solves for the mismatch component using optimization toolboxes. We
prove that these two approaches can be theoretically unified through
primal-dual equivalence and share a common problem formulation under
different choices of parameters. With a proper choice of parameters,
we can overcome the weakness of the worst-case-constrained beamforming
and make it as good as the reconstruction-based method. To efficiently
solve the unified problem, we develop a novel closed-form solution
scheme comprising three consecutive stages: Diagonalization Transform,
Phase Alignment, and Karush-Kuhn-Tucker (KKT) Solution. In contrast
to the traditional methods that calculate the Lagrangian multiplier,
we construct an auxiliary variable and reduce the KKT system to a
single nonlinear equation solvable via bisection. Moreover, we unveil
the existence and uniqueness conditions for the unified RAB problem,
which have never been explored before. The simulation results indicate
that worst-case-constrained beamforming can achieve comparable performance
with the reconstruction-based method and that the proposed solution
consumes less running time than the existing benchmarks while maintaining
optimality. 
\end{abstract}

\begin{IEEEkeywords}
Robust adaptive beamforming, primal-dual equivalence, closed-form
solution, existence, uniqueness.
\end{IEEEkeywords}

\section{Introduction\label{sec:Introduction}}

\IEEEPARstart{A}{daptive} beamforming \citep{griffiths1982alternative,shan1985adaptive}
is a classical research field with numerous applications in wireless
communications and radar signal processing \citep{van2002optimum}.
Consider a uniform linear array (ULA) of $N$ sensors receiving narrowband
signals. The received snapshot vector $\mathbf{x}_{r}\left[t\right]\in\mathbb{C}^{N}$
at time $t$ ($t=1,\ldots,T$) is modeled as 
\begin{equation}
\begin{aligned}\mathbf{x}_{r}\left[t\right] & =\mathbf{x}_{s}\left[t\right]+\mathbf{x}_{i}\left[t\right]+\mathbf{x}_{n}\left[t\right]\\
 & =\mathbf{a}\cdot s\left[t\right]+\mathbf{x}_{i}\left[t\right]+\mathbf{x}_{n}\left[t\right],
\end{aligned}
\end{equation}
where $\mathbf{x}_{s}\left[t\right]$, $\mathbf{x}_{i}\left[t\right]$,
and $\mathbf{x}_{n}\left[t\right]$ denote the desired signal, interference,
and noise components of $\mathbf{x}_{r}\left[t\right]$ and are assumed
to be statistically independent. The desired signal $\mathbf{x}_{s}\left[t\right]$
is further expressed as $\mathbf{a}\cdot s\left[t\right]$ where $s\left[t\right]$
is the signal waveform and $\mathbf{a}$ is the steering vector associated
with the signal direction of arrival (DOA) $\theta$. $\mathbf{x}_{n}\left[t\right]$
is white background noise. The beamformer output is thus given by
\begin{equation}
y_{o}\left[t\right]=\mathbf{w}^{H}\mathbf{x}_{r}\left[t\right],
\end{equation}
with $\mathbf{w}\in\mathbb{C}^{N}$ representing the weight vector. 

A prevalent technique in adaptive beamforming is the Minimum Variance
Distortionless Response (MVDR) method \citep{capon1969high}, which
maximizes the output signal-to-interference-plus-noise ratio (SINR)
\begin{equation}
\mathrm{SINR}=\frac{\sigma_{s}^{2}\left|\mathbf{w}^{H}\mathbf{a}\right|^{2}}{\mathbf{w}^{H}\mathbf{R}\mathbf{w}}
\end{equation}
where $\sigma_{s}^{2}$ is the power of the desired signal and $\mathbf{R}$
should be the interference-plus-noise covariance matrix $\mathbf{R}_{i+n}=\mathbb{E}\left[\left(\mathbf{x}_{i}\left[t\right]+\mathbf{x}_{n}\left[t\right]\right)\left(\mathbf{x}_{i}\left[t\right]+\mathbf{x}_{n}\left[t\right]\right)^{H}\right]$.
Mathematically, maximizing the output SINR is equivalent to the MVDR
problem

\begin{equation}
\begin{aligned} & \underset{\mathbf{w}\in\mathbb{C}^{N}}{\mathsf{minimize}} &  & \mathbf{w}^{H}\mathbf{R}\mathbf{w}\\
 & \mathsf{subject\ to} &  & \mathbf{w}^{H}\mathbf{a}=1.
\end{aligned}
\label{eq:MVDR}
\end{equation}
In engineering practice, the true value of $\mathbf{R}_{i+n}$ is
unknown, so the sample covariance matrix $\hat{\mathbf{R}}=\frac{1}{T}\sum_{t=1}^{T}\mathbf{x}_{r}\left[t\right]\mathbf{x}_{r}^{H}\left[t\right]$
is used in the position of $\mathbf{R}$. When the number of snapshots
is sufficiently large, $\mathbf{R}\succ\mathbf{0}$ typically holds
and the MVDR problem admits a closed-form solution
\begin{equation}
\mathbf{w}^{\star}=\frac{\mathbf{R}^{-1}\mathbf{a}}{\mathbf{a}^{H}\mathbf{R}^{-1}\mathbf{a}}.
\end{equation}
MVDR beamforming typically requires exact knowledge of the steering
vector of the desired signal. However, this technique is susceptible
to various array modeling errors, which may stem from waveform propagation
uncertainty, array calibration inaccuracy, unexpected channel disturbance,
and/or directional aiming errors \citep{vural1979effects,li2003robust,vorobyov2003robust,hassanien2008robust}.
These modeling errors lead to steering vector mismatch and may severely
degrade beamforming performance. Therefore, it is highly necessary
to provide a robustness guarantee for the conventional scheme of adaptive
beamforming, giving rise to the methodology of robust adaptive beamforming
(RAB) \citep{li2005robust,khabbazibasmenj2012robust,vorobyov2013principles}. 

Early RAB studies can be traced back to \citep{cox1987robust,feldman1994projection}.
Later on, two lines of mainstream research on RAB become notable and
influential. In both lines of mainstream research, the true steering
vector $\mathbf{a}_{\mathrm{true}}$ is decomposed into its nominal
value $\mathbf{a}$ and a mismatch quantity $\boldsymbol{\delta}$,
i.e., $\mathbf{a}_{\mathrm{true}}=\mathbf{a}+\boldsymbol{\delta}$.
The first line is model-driven and includes worst-case-constrained
and probability-constrained approaches \citep{hassanien2008robust,gershman2010convex}.
The worst-case-constrained approach originated from \citep{vorobyov2003robust,lorenz2005robust}
and was further elaborated in \citep{vorobyov2004adaptive,gershman2006robust}.
This approach models the mismatch quantity $\boldsymbol{\delta}$
as a spherically bounded vector within the uncertainty set $\left\{ \boldsymbol{\delta}|\left\Vert \boldsymbol{\delta}\right\Vert _{2}\leq\varepsilon\right\} $,
where $\varepsilon$ is the uncertainty level. The magnitude $\left|\mathbf{w}^{H}\mathbf{a}_{\mathrm{true}}\right|$
is required to stay at least $1$ for any $\boldsymbol{\delta}$ in
the uncertainty set, which means 

\begin{equation}
\min_{\left\Vert \boldsymbol{\delta}\right\Vert _{2}\leq\varepsilon}\left|\mathbf{w}^{H}\left(\mathbf{a}+\boldsymbol{\delta}\right)\right|\geq1.\label{eq:robust constraint worst-case}
\end{equation}
According to the triangle and Cauchy--Schwarz inequalities, we have
\begin{equation}
\min_{\left\Vert \boldsymbol{\delta}\right\Vert _{2}\leq\varepsilon}\left|\mathbf{w}^{H}\left(\mathbf{a}+\boldsymbol{\delta}\right)\right|=\left|\mathbf{w}^{H}\mathbf{a}\right|-\varepsilon\left\Vert \mathbf{w}\right\Vert _{2}
\end{equation}
and the equality holds if and only if $\boldsymbol{\delta}=-\frac{\mathbf{w}}{\left\Vert \mathbf{w}\right\Vert _{2}}\varepsilon\exp\left(j\arg\left(\mathbf{w}^{H}\mathbf{a}\right)\right)$
\citep{vorobyov2003robust}. With proper phase rotation, we can make
$\mathbf{w}^{H}\mathbf{a}$ real-valued, so constraint \eqref{eq:robust constraint worst-case}
is recast as 
\begin{equation}
\begin{gathered}\begin{alignedat}{1} & \mathbf{w}^{H}\mathbf{a}\geq\varepsilon\left\Vert \mathbf{w}\right\Vert _{2}+1\\
 & \mathrm{Im}\left[\mathbf{w}^{H}\mathbf{a}\right]=0.
\end{alignedat}
\end{gathered}
\label{eq:robust constraint worst-case-final}
\end{equation}
The worst-case constraint could be too conservative, since the worst
operational scenarios are very unlikely to occur \citep{vorobyov2007relationship}.
Preserving the distortionless response under high-probability operational
scenarios seems more reasonable, which gives rise to the probability-constrained
approach. 

In the probability-constrained approach \citep{vorobyov2008relationship},
the mismatch quantity $\boldsymbol{\delta}$ is modeled as a multivariate
random variable with zero mean and covariance matrix $\boldsymbol{\Sigma}_{\boldsymbol{\delta}}$.
The magnitude $\left|\mathbf{w}^{H}\mathbf{a}_{\mathrm{true}}\right|$
should exceed $1$ with a probability above a specified threshold: 

\begin{equation}
\Pr\left\{ \left|\mathbf{w}^{H}\left(\mathbf{a}+\boldsymbol{\delta}\right)\right|\geq1\right\} >p_{\mathrm{th}}\label{eq:robust constraint prob}
\end{equation}
where $p_{\mathrm{th}}$ is a predetermined probability threshold.
Constraint \eqref{eq:robust constraint prob} can be tightly approximated
by \citep{vorobyov2013principles,vorobyov2008relationship}
\begin{equation}
\begin{gathered}\begin{alignedat}{1} & \mathbf{w}^{H}\mathbf{a}\geq\varepsilon\left\Vert \boldsymbol{\Sigma}_{\boldsymbol{\delta}}^{1/2}\mathbf{w}\right\Vert _{2}+1\\
 & \mathrm{Im}\left[\mathbf{w}^{H}\mathbf{a}\right]=0.
\end{alignedat}
\end{gathered}
\label{eq:robust constraint prob-final}
\end{equation}
When $\boldsymbol{\delta}$ is Gaussian distributed, $\varepsilon=\sqrt{-\log\left(1-p_{\mathrm{th}}\right)}$.
When the distribution is unknown, $\varepsilon=1/\sqrt{1-p_{\mathrm{th}}}$.
Note that both \eqref{eq:robust constraint worst-case-final} and
\eqref{eq:robust constraint prob-final} describe a second-order cone
(SOC) constraint and the formulations induced from both approaches
can be unified as: 
\begin{equation}
\begin{aligned} & \underset{\mathbf{w}\in\mathbb{C}^{N}}{\mathsf{minimize}} &  & \mathbf{w}^{H}\mathbf{R}\mathbf{w}\\
 & \mathsf{subject\ to} &  & \mathbf{w}^{H}\mathbf{a}\geq\varepsilon\left\Vert \mathbf{A}\mathbf{w}\right\Vert _{2}+1\\
 &  &  & \mathrm{Im}\left[\mathbf{w}^{H}\mathbf{a}\right]=0.
\end{aligned}
\label{eq:opt prob}
\end{equation}
The affine parameter $\mathbf{A}$ can be interpreted as confining
$\boldsymbol{\delta}$ to an ellipsoidal uncertainty set. This problem
is a convex second-order cone programming (SOCP) and has been studied
in \citep{lorenz2005robust}. In the case of limited snapshots or
low input signal-to-noise ratio (SNR), the diagonal loading strategy
\citep{li2003robust} can be applied to mitigate the estimation error
of the covariance matrix. Other useful extensions of problem \eqref{eq:opt prob}
are mentioned in \citep{vorobyov2004adaptive,shahbazpanahi2003robust,rong2005robust,rong2006robust,palomar2010convex}.
The model-driven approaches provide a unified convex formulation to
ensure robustness against steering vector mismatches, and the optimization
problem can be solved with off-the-shelf software packages. Nevertheless,
the beamforming performance of the model-driven approaches is sensitive
to the selection of hyperparameters $\varepsilon$, $\mathbf{A}$,
and $\mathbf{R}$. If $\varepsilon$ is too small, the true mismatch
quantity falls outside the uncertainty set, whereas an excessively
large $\varepsilon$ can make the constraint set infeasible. The ellipsoidal
structure of the uncertainty set relies heavily on prior information.
The matrix $\mathbf{R}$ is typically chosen as the sample covariance
matrix, but this choice may suffer from the self-cancellation effect
when it contains a strong signal component. 

The second line of mainstream RAB research is data-driven and mainly
refers to the interference covariance matrix reconstruction (ICMR)
methods \citep{gu2012robust}. The first step is to estimate the interference-plus-noise
covariance matrix $\mathbf{R}_{i+n}$ from $\hat{\mathbf{R}}$ using
the Capon spatial spectrum. One way to reconstruct $\mathbf{R}_{i+n}$
is 
\begin{equation}
\hat{\mathbf{R}}_{i+n}=\int_{\bar{\Theta}}\frac{\mathbf{a}\left(\theta\right)\mathbf{a}^{H}\left(\theta\right)}{\mathbf{a}^{H}\left(\theta\right)\hat{\mathbf{R}}^{-1}\mathbf{a}\left(\theta\right)}d\theta,
\end{equation}
where $\mathbf{a}\left(\theta\right)$ is the steering vector for
the hypothetical DOA $\theta$ and $\bar{\Theta}$ is the complement
of the desired signal angular sector. To avoid singularity and to
account for the structure of the noise covariance, a scaled identity
matrix could be added to $\hat{\mathbf{R}}_{i+n}$. The next step
is to obtain the orthogonal component of $\boldsymbol{\delta}$ with
respect to the nominal steering vector $\mathbf{a}$, denoted by $\boldsymbol{\delta}_{\bot}$.
The parallel component of $\boldsymbol{\delta}$ has no impact on
the beamforming performance \citep{gu2012robust}. The quantity $\boldsymbol{\delta}_{\bot}$
is designed to maximize the beamformer output power (minimize its
reciprocal) and keep the corrected steering vector farther away from
the interference region than the nominal one \citep{gu2012robust}:
\begin{equation}
\begin{aligned} & \underset{\boldsymbol{\delta}_{\bot}\in\mathbb{C}^{N}}{\mathsf{minimize}} &  & \left(\mathbf{a}+\boldsymbol{\delta}_{\bot}\right)^{H}\mathbf{\hat{R}}_{i+n}^{-1}\left(\mathbf{a}+\boldsymbol{\delta}_{\bot}\right)\\
 & \mathsf{subject\ to} &  & \mathbf{a}^{H}\boldsymbol{\delta}_{\bot}=0\\
 &  &  & \left(\mathbf{a}+\boldsymbol{\delta}_{\bot}\right)^{H}\hat{\mathbf{R}}_{i+n}\left(\mathbf{a}+\boldsymbol{\delta}_{\bot}\right)\leq\mathbf{a}^{H}\hat{\mathbf{R}}_{i+n}\mathbf{a}.
\end{aligned}
\label{eq: icmr}
\end{equation}
This problem is a convex quadratically constrained quadratic programming
(QCQP). The computational complexity is $\mathcal{O}\left(N^{3.5}\right)$
\citep{gu2012robust}. With the optimal $\boldsymbol{\delta}_{\bot}^{\star}$,
the obtained beamformer is 
\begin{equation}
\mathbf{w}^{\star}=\frac{\mathbf{\hat{R}}_{i+n}^{-1}\left(\mathbf{a}+\boldsymbol{\delta}_{\bot}^{\star}\right)}{\left(\mathbf{a}+\boldsymbol{\delta}_{\bot}^{\star}\right)^{H}\mathbf{\hat{R}}_{i+n}^{-1}\left(\mathbf{a}+\boldsymbol{\delta}_{\bot}^{\star}\right)}.
\end{equation}
ICMR methods are data-driven because the optimized $\boldsymbol{\delta}_{\bot}$
is highly dependent on the estimated $\hat{\mathbf{R}}_{i+n}$, which
is constructed from the Capon spatial spectrum derived from snapshot
data. In a later work \citep{huang2015robust}, \citeauthor{huang2015robust}
proposed a novel estimator of $\mathbf{R}_{i+n}$ using integration
on the surface of an annulus uncertainty set. Other variants of such
methods can be found in \citep{hassanien2008robust,huang2019new,zhu2020robust,luo2023urglq}.
Numerous experiment results show that the reconstructed interference
covariance matrix $\hat{\mathbf{R}}_{i+n}$ and the corrected steering
vector $\mathbf{a}+\boldsymbol{\delta}_{\bot}$ largely overcome the
effect of desired signal cancellation, which exhibits an improvement
over the model-driven approaches. 

Because model- and data-driven formulations are both convex problems,
we can apply the interior-point method to get the globally optimal
solution \citep{vorobyov2003robust,hassanien2008robust,gu2012robust,huang2015robust}.
This method is often implemented with off-the-shelf solvers, such
as SeDuMi \citep{sturm1999using} and MOSEK \citep{aps2019mosek}.
It is noteworthy that, to solve SOCP \eqref{eq:opt prob}, \citet{lorenz2005robust}
developed an innovative Lagrange-equation-based algorithm named RMVB.
It maps the complex-valued variable into a tuple of real and imaginary
components, i.e., $\check{\mathbf{w}}=\left[\mathrm{Re}\left[\mathbf{w}\right]^{T},\mathrm{Im}\left[\mathbf{w}\right]^{T}\right]^{T}$,
and expresses the optimal $\check{\mathbf{w}}$ as 
\begin{equation}
\check{\mathbf{w}}=-\zeta\left(\check{\mathbf{R}}+\zeta\check{\mathbf{Q}}\right)^{-1}\check{\mathbf{a}}
\end{equation}
with $\zeta$ being the Lagrange multiplier, 
\begin{equation}
\check{\mathbf{Q}}=\varepsilon^{2}\check{\mathbf{A}}^{T}\check{\mathbf{A}}-\check{\mathbf{a}}\mathbf{\check{a}}^{T},
\end{equation}
$\check{\mathbf{R}}=\left[\begin{array}{cc}
\mathrm{Re}\left[\mathbf{R}\right] & -\mathrm{Im}\left[\mathbf{R}\right]\\
\mathrm{Im}\left[\mathbf{R}\right] & \mathrm{Re}\left[\mathbf{R}\right]
\end{array}\right]$, $\check{\mathbf{A}}=\left[\begin{array}{cc}
\mathrm{Re}\left[\mathbf{A}\right] & -\mathrm{Im}\left[\mathbf{A}\right]\\
\mathrm{Im}\left[\mathbf{A}\right] & \mathrm{Re}\left[\mathbf{A}\right]
\end{array}\right]$, and $\check{\mathbf{a}}=\left[\mathrm{Re}\left[\mathbf{a}\right]^{T},\mathrm{Im}\left[\mathbf{a}\right]^{T}\right]^{T}$.
Although the optimal $\check{\mathbf{w}}$ has a closed-form expression,
the optimal $\zeta$ must be computed numerically from a zero of a
scalar secular function $S\left(\zeta\right)$:
\begin{equation}
S\left(\zeta\right)=\zeta^{2}\sum_{n=1}^{N}\frac{\check{b}_{n}^{2}\gamma_{n}}{\left(1+\zeta\gamma_{n}\right)^{2}}-2\zeta\sum_{n=1}^{N}\frac{\check{b}_{n}^{2}}{1+\zeta\gamma_{n}}-1
\end{equation}
where the eigenvalue decomposition (EVD) of $\left(\check{\mathbf{R}}^{-1/2}\right)^{T}\check{\mathbf{Q}}\check{\mathbf{R}}^{-1/2}$
is given as $\check{\mathbf{V}}\mathrm{Diag}\left(\boldsymbol{\gamma}\right)\check{\mathbf{V}}^{T}$
and $\check{\mathbf{b}}=\mathbf{\check{V}}^{T}\left(\check{\mathbf{R}}^{-1/2}\right)^{T}\check{\mathbf{a}}$.
$\check{\mathbf{R}}^{-1/2}$ is realized by Cholesky decomposition.
The function $S\left(\zeta\right)$ is not necessarily monotonic and
has multiple zeros, but the desired zero is found to be bounded below
by $\hat{\zeta}$ after a series of involved analysis in \citep[Sec. II-B]{lorenz2005robust}.
In the domain $\left(\hat{\zeta},+\infty\right)$, there exists a
unique zero for $S\left(\zeta\right)$ and it can be obtained from
the iterative Newton-Raphson method. In terms of computational complexity,
the first method is $\mathcal{O}\left(N^{3.5}\right)$ \citep{lobo1998applications}
while the second one is $\mathcal{O}\left(N^{3}\right)$ due to matrix
factorization and EVD. Obviously, RMVB is more efficient than the
interior-point method. However, there is still room for improvement
in RMVB. Firstly, RMVB transform the complex-valued variable into
a tuple of real-valued components, which doubles the problem size.
Secondly, RMVB relies on invertibility of $\check{\mathbf{R}}$ and
this assumption is fundamental in the analysis of a lower bound on
the Lagrange multiplier, which means that RMVB does not necessarily
work well on a rank deficient $\check{\mathbf{R}}$. Thirdly, the
derivation process of the optimal Lagrange multiplier is quite intricate.
Because $S\left(\zeta\right)$ may have multiple zeros in the real
domain, it seems inevitable to determine an interval containing only
the desired zero. RMVB establishes a lower bound $\hat{\zeta}$ but
the analysis of bound construction is too complicated. 

To summarize, the model-driven RAB formulation admits a more efficient
solution algorithm, but the beamforming performance is compromised
by the problem of desired signal cancellation. Conversely, although
the data-driven counterpart can mitigate the self-cancellation issue,
the solutions often require optimization software. Based on these
observations, we make the following contributions: 
\begin{itemize}
\item To address the problem of desired signal cancellation in the model-driven
formulation, we establish a mathematical connection with the data-driven
formulation and prove their primal-dual equivalence, which has not
been explored in the existing literature. By choosing the hyperparameters
under the guidance of the data-driven formulation, we can mitigate
the self-cancellation issue as effectively as the ICMR methods. The
primal-dual equivalence confirms the continued engineering significance
of the model-driven formulation in contemporary applications. 
\item To further improve solution efficiency, we derive the closed-form
solution for the model-driven formulation, resolving a long-standing
open problem. Variable transforms are performed in the complex domain
and the problem size remains the same. The proposed solution scheme
accommodates various parameter settings, such as a rank-deficient
$\mathbf{R}$ (due to limited received snapshots) and $\mathbf{A}$
not having full column rank. The scheme contains three consecutive
stages: Diagonalization Transform, Phase Alignment, and KKT Solution.
The process of KKT system solution is much simplified. Instead of
solving the traditional Lagrangian multiplier equation, we introduce
an auxiliary variable that fuses primal and dual variables to formulate
a single nonlinear equation. This equation is solved via bisection,
in contrast to the iterative Newton-Raphson method in RMVB, after
which all the system variables can be recovered. 
\item Apart from the closed-form solution, we provide existence and uniqueness
analysis, which is not realizable in previous research. The existence
condition mainly stems from problem feasibility but we spot a scenario
where the optimal value cannot be strictly achieved and a finite solution
does not exist. The uniqueness criterion is associated with the rank
condition of $\mathbf{R}$ as well as the uncertainty level $\varepsilon$.
Even if $\mathbf{R}$ is rank-deficient, the solution can still be
unique as long as $\varepsilon$ is selected within a proper range. 
\end{itemize}
\hspace{1em}The rest of the paper is organized as follows. In section
\ref{sec:Connection-data-model}, we reveal the primal-dual equivalence
between the data- and model-driven formulations. In Section \ref{sec:Problem and Solution},
we present the derivation process of the closed-form solution. In
section \ref{sec:Existence-and-Uniqueness}, we provide existence
and uniqueness analysis and give a few illustrative examples. Finally,
Section \ref{sec:Numerical-Simulations} presents numerical results,
and the conclusions are drawn in Section \ref{sec:Conclusion}. 

The following notation is adopted. Boldface upper-case letters represent
matrices, boldface lower-case letters denote column vectors, and standard
lower-case or upper-case letters stand for scalars. $\mathbb{R}$
($\mathbb{C}$) denotes the real (complex) field. $\exp\left(\cdot\right)$
stands for the natural exponential function. $\left|\cdot\right|$
denotes the absolute value for the real case and modulus for the complex
case. $\mathrm{Re}\left[\cdot\right]$ and $\mathrm{Im}\left[\cdot\right]$
denote the real and imaginary part of a complex number. $\arg$ represents
the phase angle of a complex number. $\mathbb{E}\left[\cdot\right]$
represents the expectation of a random variable. $\Pr\left\{ \cdot\right\} $
is the probability operator. $\left\Vert \cdot\right\Vert _{p}$ denotes
the $\ell_{p}$-norm of a vector. $\odot$ stands for the Hadamard
product. $\mathbf{X}^{T}$ and $\mathbf{X}^{H}$ denote the transpose
and conjugate transpose of $\mathbf{X}$. $\mathbf{X}\succeq\mathbf{0}$
means $\mathbf{X}$ is positive semidefinite. $\textrm{Diag}\left(\mathbf{x}\right)$
is a diagonal matrix with $\mathbf{x}$ filling its principal diagonal.
$\mathrm{BlkDiag}\left(\left\{ \mathbf{X}_{n}\right\} \right)$ is
a block-diagonal matrix placing $\left\{ \mathbf{X}_{n}\right\} $
sequentially on the main diagonal. The superscript $\star$ represents
the optimal solution of some optimization problem. Whenever arithmetic
operators ($>$, $\geq$, $\left|\cdot\right|$, $\cdot/\cdot$, etc.)
are applied to vectors or matrices, they are elementwise operations.

\section{Connection Between Model- and Data-Driven Formulations\label{sec:Connection-data-model}}

Previous studies have shown that the presence of the desired signal
in the sample covariance matrix may compromise the performance of
worst-case constrained beamforming at high input SNR. In contrast,
the ICMR methods can address the problem of desired signal cancellation
and lead to improved performance under various SNR conditions. However,
it does not mean that the model-driven formulation is inferior. While
the two formulations appear different, they are inherently equivalent
in the primal-dual sense. In the following, we will show how the data-driven
formulation \eqref{eq: icmr} can be equivalently recast into \eqref{eq:opt prob}. 

We start from the orthogonal property of the nominal steering vector
$\mathbf{a}$ and $\boldsymbol{\delta}_{\bot}$. It implies that $\boldsymbol{\delta}_{\bot}$
lies in the subspace of the orthogonal complement of $\mathbf{a}$:
$\boldsymbol{\delta}_{\bot}=\mathbf{S}_{a}\mathbf{h}$ with $\mathbf{a}^{H}\mathbf{S}_{a}=\mathbf{0}$.
By substituting the expression of $\boldsymbol{\delta}_{\bot}$, we
can equivalently transform the quadratic constraint and objective
with a change of variables, as presented in Lemma 1. 
\begin{lem}
\label{lem:transform}The quadratic constraint and objective in the
data-driven formulation \eqref{eq: icmr} can be equivalently transformed
into 
\begin{equation}
\begin{aligned} & \left(\mathbf{a}+\boldsymbol{\delta}_{\bot}\right)^{H}\hat{\mathbf{R}}_{i+n}\left(\mathbf{a}+\boldsymbol{\delta}_{\bot}\right)\leq\mathbf{a}^{H}\hat{\mathbf{R}}_{i+n}\mathbf{a}\\
\Longleftrightarrow & \left\Vert \bar{\mathbf{R}}^{1/2}\mathbf{h}+\bar{\mathbf{R}}^{-1/2}\bar{\mathbf{a}}\right\Vert _{2}\leq\sqrt{\bar{\mathbf{a}}^{H}\bar{\mathbf{R}}^{-1}\bar{\mathbf{a}}}\triangleq\varepsilon
\end{aligned}
\end{equation}
and 
\begin{equation}
\left(\mathbf{a}+\boldsymbol{\delta}_{\bot}\right)^{H}\mathbf{\hat{R}}_{i+n}^{-1}\left(\mathbf{a}+\boldsymbol{\delta}_{\bot}\right)=\left(\mathbf{f}+\mathbf{F}\mathbf{g}\right)^{H}\hat{\mathbf{R}}_{i+n}^{-1}\left(\mathbf{f}+\mathbf{F}\mathbf{g}\right),
\end{equation}
where $\bar{\mathbf{R}}=\mathbf{S}_{a}^{H}\hat{\mathbf{R}}_{i+n}\mathbf{S}_{a}$,
$\bar{\mathbf{a}}=\mathbf{S}_{a}^{H}\hat{\mathbf{R}}_{i+n}\mathbf{a}$,
$\mathbf{g}=\bar{\mathbf{R}}^{1/2}\mathbf{h}+\bar{\mathbf{R}}^{-1/2}\bar{\mathbf{a}}$,
$\mathbf{f}=\mathbf{a}-\mathbf{S}_{a}\bar{\mathbf{R}}^{-1}\bar{\mathbf{a}}$,
and $\mathbf{F}=\mathbf{S}_{a}\bar{\mathbf{R}}^{-1/2}$. 
\end{lem}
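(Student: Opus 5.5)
The plan is a direct substitution $\boldsymbol{\delta}_{\bot}=\mathbf{S}_{a}\mathbf{h}$ followed by completing the square. We take $\mathbf{S}_{a}\in\mathbb{C}^{N\times(N-1)}$ to have full column rank, so its columns span the orthogonal complement of $\mathbf{a}$; without loss of generality it has orthonormal columns. Assuming $\hat{\mathbf{R}}_{i+n}\succ\mathbf{0}$, which the reconstruction guarantees through the added scaled identity, it follows that $\bar{\mathbf{R}}=\mathbf{S}_{a}^{H}\hat{\mathbf{R}}_{i+n}\mathbf{S}_{a}\succ\mathbf{0}$. Hence $\bar{\mathbf{R}}^{1/2}$, $\bar{\mathbf{R}}^{-1/2}$ and $\bar{\mathbf{R}}^{-1}$ are well defined, and the map $\mathbf{h}\mapsto\mathbf{g}=\bar{\mathbf{R}}^{1/2}\mathbf{h}+\bar{\mathbf{R}}^{-1/2}\bar{\mathbf{a}}$ is an affine bijection. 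This bijection is what makes both transformations equivalences rather than mere implications.

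For the constraint, we expand
\[
\left(\mathbf{a}+\mathbf{S}_{a}\mathbf{h}\right)^{H}\hat{\mathbf{R}}_{i+n}\left(\mathbf{a}+\mathbf{S}_{a}\mathbf{h}\right)=\mathbf{a}^{H}\hat{\mathbf{R}}_{i+n}\mathbf{a}+\mathbf{h}^{H}\bar{\mathbf{R}}\mathbf{h}+\mathbf{h}^{H}\bar{\mathbf{a}}+\bar{\mathbf{a}}^{H}\mathbf{h}.
\]
Cancelling $\mathbf{a}^{H}\hat{\mathbf{R}}_{i+n}\mathbf{a}$ from both sides, the constraint becomes $\mathbf{h}^{H}\bar{\mathbf{R}}\mathbf{h}+2\mathrm{Re}[\bar{\mathbf{a}}^{H}\mathbf{h}]\leq0$. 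Completing the square gives
\[
\mathbf{h}^{H}\bar{\mathbf{R}}\mathbf{h}+2\mathrm{Re}[\bar{\mathbf{a}}^{H}\mathbf{h}]=\left\Vert \bar{\mathbf{R}}^{1/2}\mathbf{h}+\bar{\mathbf{R}}^{-1/2}\bar{\mathbf{a}}\right\Vert _{2}^{2}-\bar{\mathbf{a}}^{H}\bar{\mathbf{R}}^{-1}\bar{\mathbf{a}},
\]
so the constraint is equivalent to $\left\Vert \cdot\right\Vert _{2}^{2}\leq\varepsilon^{2}$. Taking square roots of nonnegative quantities yields the stated norm inequality with $\varepsilon=\sqrt{\bar{\mathbf{a}}^{H}\bar{\mathbf{R}}^{-1}\bar{\mathbf{a}}}$.

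For the objective, we invert the definition of $\mathbf{g}$ to get $\mathbf{h}=\bar{\mathbf{R}}^{-1/2}\mathbf{g}-\bar{\mathbf{R}}^{-1}\bar{\mathbf{a}}$. Substituting,
\[
\mathbf{a}+\boldsymbol{\delta}_{\bot}=\mathbf{a}+\mathbf{S}_{a}\bar{\mathbf{R}}^{-1/2}\mathbf{g}-\mathbf{S}_{a}\bar{\mathbf{R}}^{-1}\bar{\mathbf{a}}=\mathbf{f}+\mathbf{F}\mathbf{g}.
\]
The objective identity then follows by inserting this into $\left(\mathbf{a}+\boldsymbol{\delta}_{\bot}\right)^{H}\hat{\mathbf{R}}_{i+n}^{-1}\left(\mathbf{a}+\boldsymbol{\delta}_{\bot}\right)$.

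Each computation is routine. The main point to handle carefully is justifying invertibility of $\bar{\mathbf{R}}$ and the bijectivity of $\mathbf{h}\leftrightarrow\mathbf{g}$. Together these guarantee that the orthogonality constraint $\mathbf{a}^{H}\boldsymbol{\delta}_{\bot}=0$ is absorbed exactly into the parameterization by $\mathbf{h}$, and that optimizing over $\mathbf{g}$ is equivalent to optimizing over $\boldsymbol{\delta}_{\bot}$.
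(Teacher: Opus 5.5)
Your proposal is correct and follows essentially the same route as the paper: substitute $\boldsymbol{\delta}_{\bot}=\mathbf{S}_{a}\mathbf{h}$, cancel $\mathbf{a}^{H}\hat{\mathbf{R}}_{i+n}\mathbf{a}$, complete the square in $\mathbf{h}$, then invert $\mathbf{g}=\bar{\mathbf{R}}^{1/2}\mathbf{h}+\bar{\mathbf{R}}^{-1/2}\bar{\mathbf{a}}$ to obtain $\mathbf{a}+\boldsymbol{\delta}_{\bot}=\mathbf{f}+\mathbf{F}\mathbf{g}$. Your explicit justification that $\bar{\mathbf{R}}\succ\mathbf{0}$ (from $\hat{\mathbf{R}}_{i+n}\succ\mathbf{0}$ and full column rank of $\mathbf{S}_{a}$) and that $\mathbf{h}\mapsto\mathbf{g}$ is bijective makes explicit what the paper uses only implicitly, while the orthonormality of $\mathbf{S}_{a}$ that you impose is never actually needed.
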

\begin{IEEEproof}
See Appendix \ref{sec:Proof-of-Lemma transform} for the detailed
proof. 
\end{IEEEproof}
Problem \eqref{eq: icmr} is transformed as 
\begin{equation}
\begin{aligned} & \underset{\mathbf{g}\in\mathbb{C}^{N-1}}{\mathsf{minimize}} &  & \left(\mathbf{f}+\mathbf{F}\mathbf{g}\right)^{H}\hat{\mathbf{R}}_{i+n}^{-1}\left(\mathbf{f}+\mathbf{F}\mathbf{g}\right)\\
 & \mathsf{subject\thinspace to} &  & \left\Vert \mathbf{g}\right\Vert _{2}\leq\varepsilon,
\end{aligned}
\end{equation}
or equivalently, 
\begin{equation}
\begin{aligned} & \underset{\mathbf{z}\in\mathbb{C}^{N},\mathbf{g}\in\mathbb{C}^{N-1}}{\mathsf{minimize}} &  & \left\Vert \mathbf{z}\right\Vert _{2}^{2}\\
 & \hspace{0.3cm}\mathsf{subject\thinspace to} &  & \hat{\mathbf{R}}_{i+n}^{1/2}\mathbf{z}=\mathbf{f}+\mathbf{F}\mathbf{g}\\
 &  &  & \left\Vert \mathbf{g}\right\Vert _{2}\leq\varepsilon.
\end{aligned}
\end{equation}
Next, we introduce the complex dual variable $\mathbf{w}$ and derive
the partial Lagrangian function
\begin{equation}
\mathcal{L}\left(\mathbf{z},\mathbf{g},\mathbf{w}\right)=\left\Vert \mathbf{z}\right\Vert _{2}^{2}+\mathrm{Re}\left[\mathbf{w}^{H}\left(\mathbf{f}+\mathbf{F}\mathbf{g}-\hat{\mathbf{R}}_{i+n}^{1/2}\mathbf{z}\right)\right].
\end{equation}
The second-order cone (SOC) constraint $\left\Vert \mathbf{g}\right\Vert _{2}\leq\varepsilon$
is implicit. The dual function is given as 
\begin{equation}
\begin{aligned}g\left(\mathbf{w}\right) & =\inf_{\left\Vert \mathbf{g}\right\Vert _{2}\leq\varepsilon}\left\Vert \mathbf{z}\right\Vert _{2}^{2}-\mathrm{Re}\left[\mathbf{w}^{H}\left(\hat{\mathbf{R}}_{i+n}^{1/2}\mathbf{z}\right)\right]\\
 & \hspace{0.5cm}+\mathrm{Re}\left[\mathbf{w}^{H}\mathbf{F}\mathbf{g}\right]+\mathrm{Re}\left[\mathbf{w}^{H}\mathbf{f}\right]\\
 & =-\frac{1}{4}\mathbf{w}^{H}\hat{\mathbf{R}}_{i+n}\mathbf{w}-\varepsilon\left\Vert \mathbf{F}^{H}\mathbf{w}\right\Vert _{2}+\mathrm{Re}\left[\mathbf{w}^{H}\mathbf{f}\right],
\end{aligned}
\end{equation}
where $\mathbf{z}=\frac{1}{2}\hat{\mathbf{R}}_{i+n}^{1/2}\mathbf{w}$
(zero gradient) and $\mathbf{g}=-\varepsilon\left(\mathbf{F}^{H}\mathbf{w}\right)/\left\Vert \mathbf{F}^{H}\mathbf{w}\right\Vert _{2}$
(the Cauchy--Schwarz inequality). Consequently, the dual problem
is written as 
\begin{equation}
\begin{aligned} & \underset{\mathbf{w}\in\mathbb{C}^{N}}{\mathsf{maximize}} &  & -\frac{1}{4}\mathbf{w}^{H}\hat{\mathbf{R}}_{i+n}\mathbf{w}-\varepsilon\left\Vert \mathbf{F}^{H}\mathbf{w}\right\Vert _{2}+\mathrm{Re}\left[\mathbf{w}^{H}\mathbf{f}\right].\end{aligned}
\label{eq:data-driven dual}
\end{equation}
For any $\mathbf{w}\in\mathbb{C}^{N}$, there always a $\beta\geq0$
and $\bar{\mathbf{w}}\in\mathbb{C}^{N}$ such that $\mathbf{w}=\beta\bar{\mathbf{w}}$.
With this polar representation of $\mathbf{w}$, the dual problem
\eqref{eq:data-driven dual} can be reduced to solving an optimization
problem over $\bar{\mathbf{w}}$. 
\begin{thm}
\label{thm:qcqp2socp}The optimal solution $\mathbf{w}^{\star}$ to
problem \eqref{eq:data-driven dual} is a constant multiple of the
optimal solution $\bar{\mathbf{w}}^{\star}$ to the following problem
\begin{equation}
\begin{aligned} & \underset{\bar{\mathbf{w}}\in\mathbb{C}^{N}}{\mathsf{minimize}} &  & \bar{\mathbf{w}}^{H}\hat{\mathbf{R}}_{i+n}\bar{\mathbf{w}}\\
 & \mathsf{subject\thinspace to} &  & \mathrm{Re}\left[\bar{\mathbf{w}}^{H}\mathbf{f}\right]-\varepsilon\left\Vert \mathbf{F}^{H}\bar{\mathbf{w}}\right\Vert _{2}=1
\end{aligned}
\end{equation}
and the scaling factor is $\beta^{\star}=\frac{2\left(\mathrm{Re}\left[\left(\bar{\mathbf{w}}^{\star}\right)^{H}\mathbf{f}\right]-\varepsilon\left\Vert \mathbf{F}^{H}\bar{\mathbf{w}}^{\star}\right\Vert _{2}\right)}{\left(\bar{\mathbf{w}}^{\star}\right)^{H}\hat{\mathbf{R}}_{i+n}\bar{\mathbf{w}}^{\star}}$. 
\end{thm}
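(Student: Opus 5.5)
The plan is to substitute the polar representation $\mathbf{w}=\beta\bar{\mathbf{w}}$ with $\beta\geq0$ into the dual objective \eqref{eq:data-driven dual}. Write $q(\bar{\mathbf{w}})=\bar{\mathbf{w}}^{H}\hat{\mathbf{R}}_{i+n}\bar{\mathbf{w}}$ and $\ell(\bar{\mathbf{w}})=\mathrm{Re}[\bar{\mathbf{w}}^{H}\mathbf{f}]-\varepsilon\Vert\mathbf{F}^{H}\bar{\mathbf{w}}\Vert_{2}$. Then $q$ is quadratic and $\ell$ is positively homogeneous of degree one. The dual objective becomes
\[
\phi(\beta,\bar{\mathbf{w}})=-\tfrac{1}{4}\beta^{2}q(\bar{\mathbf{w}})+\beta\,\ell(\bar{\mathbf{w}}).
\]
Since $\hat{\mathbf{R}}_{i+n}\succ\mathbf{0}$ (it is inverted in \eqref{eq: icmr}), $q(\bar{\mathbf{w}})>0$ for every $\bar{\mathbf{w}}\neq\mathbf{0}$.

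I would first carry out the inner maximization over $\beta\geq0$ with $\bar{\mathbf{w}}$ fixed, which is a concave scalar quadratic.
\begin{itemize}
\item If $\ell(\bar{\mathbf{w}})\leq0$, the best choice is $\beta=0$, with value $0$.
\item If $\ell(\bar{\mathbf{w}})>0$, the maximizer is $\beta=2\ell(\bar{\mathbf{w}})/q(\bar{\mathbf{w}})$, with value $\ell(\bar{\mathbf{w}})^{2}/q(\bar{\mathbf{w}})$.
\end{itemize}
The second case already gives the stated $\beta^{\star}$. The dual problem therefore reduces to maximizing $\ell(\bar{\mathbf{w}})^{2}/q(\bar{\mathbf{w}})$ over those $\bar{\mathbf{w}}$ with $\ell(\bar{\mathbf{w}})>0$. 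This requires that such a $\bar{\mathbf{w}}$ exists, otherwise $\mathbf{w}^{\star}=\mathbf{0}$.

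Next I would use homogeneity. The ratio $\ell^{2}/q$ is invariant under positive scaling $\bar{\mathbf{w}}\mapsto c\bar{\mathbf{w}}$ with $c>0$, so we may normalize $\ell(\bar{\mathbf{w}})=1$. Maximizing $1/q$ is then the same as minimizing $q(\bar{\mathbf{w}})$ subject to $\ell(\bar{\mathbf{w}})=1$, which is exactly the problem in Theorem~\ref{thm:qcqp2socp}. Conversely, any $\bar{\mathbf{w}}$ with $\ell>0$ rescales to a feasible point of that problem with the same ratio. The two optimal values therefore correspond through $\max\ell^{2}/q=1/\min q$, and optimizers map to each other via $\mathbf{w}^{\star}=\beta^{\star}\bar{\mathbf{w}}^{\star}$. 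Because $\ell(\bar{\mathbf{w}}^{\star})=1$, this simplifies to $\beta^{\star}=2/q(\bar{\mathbf{w}}^{\star})$, consistent with the formula in the statement.

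The main obstacle is showing that the degenerate case $\ell\leq0$ everywhere does not occur, i.e.\ that the feasible set $\{\ell=1\}$ is nonempty. I would test $\bar{\mathbf{w}}=\mathbf{a}$. Since $\mathbf{a}^{H}\mathbf{S}_{a}=\mathbf{0}$, we get $\mathbf{F}^{H}\mathbf{a}=\mathbf{0}$ and $\mathbf{a}^{H}\mathbf{f}=\Vert\mathbf{a}\Vert_{2}^{2}>0$, so $\ell(\mathbf{a})>0$. Hence the optimal dual value is strictly positive and $\mathbf{w}^{\star}\neq\mathbf{0}$.

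I would also check attainment of $\min q$ on $\{\ell=1\}$. This follows from coercivity of $q$, since $\hat{\mathbf{R}}_{i+n}\succ\mathbf{0}$, together with closedness of the constraint set, because $\ell$ is continuous. Attainment ensures the optimizers $\bar{\mathbf{w}}^{\star}$ and $\mathbf{w}^{\star}$ referred to in the statement actually exist.
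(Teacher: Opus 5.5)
Your proposal is correct and follows essentially the same route as the paper. Both arguments substitute $\mathbf{w}=\beta\bar{\mathbf{w}}$, maximize the concave quadratic in $\beta\geq0$ to get $\beta=[2\ell/q]_{+}$, use $\bar{\mathbf{w}}=\mathbf{a}$ (where $\mathbf{F}^{H}\mathbf{a}=\mathbf{0}$ and $\mathbf{a}^{H}\mathbf{f}=\mathbf{a}^{H}\mathbf{a}$) to rule out the degenerate region $\ell\leq0$, and normalize the scale-invariant ratio $\ell^{2}/q$ to $\ell=1$; your explicit attainment argument via coercivity of $q$ and closedness of $\{\ell=1\}$ is a small rigor addition that the paper leaves implicit.
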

\begin{IEEEproof}
See Appendix \ref{sec:Proof-of-Theorem qcqp2socp} for the detailed
proof. 
\end{IEEEproof}
The equality constraint can be relaxed to inequality without changing
the optimal solution. Suppose $\bar{\mathbf{w}}^{\star}$ results
in $\mathrm{Re}\left[\left(\bar{\mathbf{w}}^{\star}\right)^{H}\mathbf{f}\right]-\varepsilon\left\Vert \mathbf{F}^{H}\bar{\mathbf{w}}^{\star}\right\Vert _{2}=\gamma>1$.
Then, $\bar{\mathbf{w}}^{\star}/\gamma$ achieves a lower objective
in the feasible set, causing contradiction. Furthermore, $\left(\bar{\mathbf{w}}^{\star}\right)^{H}\mathbf{f}$
should be real-valued. Suppose $\mathrm{Im}\left[\left(\bar{\mathbf{w}}^{\star}\right)^{H}\mathbf{f}\right]\neq0$.
Consider $\hat{\mathbf{w}}=\bar{\mathbf{w}}^{\star}\cdot\exp\left(j\arg\left(\left(\bar{\mathbf{w}}^{\star}\right)^{H}\mathbf{f}\right)\right)$,
and we have $\mathrm{Re}\left[\hat{\mathbf{w}}^{H}\mathbf{f}\right]-\varepsilon\left\Vert \mathbf{F}^{H}\hat{\mathbf{w}}\right\Vert _{2}=\left|\left(\bar{\mathbf{w}}^{\star}\right)^{H}\mathbf{f}\right|-\varepsilon\left\Vert \mathbf{F}^{H}\bar{\mathbf{w}}^{\star}\right\Vert _{2}>\mathrm{Re}\left[\left(\bar{\mathbf{w}}^{\star}\right)^{H}\mathbf{f}\right]-\varepsilon\left\Vert \mathbf{F}^{H}\bar{\mathbf{w}}^{\star}\right\Vert _{2}\geq1$.
Let $\gamma^{\prime}=\left|\left(\bar{\mathbf{w}}^{\star}\right)^{H}\mathbf{f}\right|-\varepsilon\left\Vert \mathbf{F}^{H}\bar{\mathbf{w}}^{\star}\right\Vert _{2}>1$,
and $\hat{\mathbf{w}}/\gamma^{\prime}$ leads to the same contradiction.
Eventually, the dual problem is equivalently converted into 
\begin{equation}
\begin{aligned} & \underset{\bar{\mathbf{w}}\in\mathbb{C}^{N}}{\mathsf{minimize}} &  & \bar{\mathbf{w}}^{H}\hat{\mathbf{R}}_{i+n}\bar{\mathbf{w}}\\
 & \mathsf{subject\thinspace to} &  & \bar{\mathbf{w}}^{H}\mathbf{f}-\varepsilon\left\Vert \mathbf{F}^{H}\bar{\mathbf{w}}\right\Vert _{2}\geq1\\
 &  &  & \mathrm{Im}\left[\bar{\mathbf{w}}^{H}\mathbf{f}\right]=0,
\end{aligned}
\end{equation}
which is identical in form to the model-driven formulation. The primal-dual
equivalence has been proved and we can focus on a unified optimization
problem \eqref{eq:opt prob}. In the unified formulation, the beamformer
can be directly computed, skipping the intermediate mismatch variable
$\boldsymbol{\delta}_{\bot}$. This is a distinct advantage over the
classical ICMR methods. 

Although both formulations can be reduced to the same mathematical
form, their parameters have different properties. The coefficient
matrix $\mathbf{R}$ in the data-driven formulation must be full-rank
due to the the need for matrix inversion (cf. \eqref{eq: icmr}),
so the estimated $\hat{\mathbf{R}}_{i+n}$ may require diagonal loading.
The affine mapping parameter $\mathbf{A}$ has full column rank for
the model-driven formulation, but it does not for the data-driven
counterpart because $\mathbf{F}^{H}$ has fewer rows than columns.
In view of these parameter characteristics, the unified formulation
is investigated under the following two scenarios: 1) $\mathbf{A}$
has full column rank and $\text{\ensuremath{\mathbf{R}}}\succeq\mathbf{0}$
with $\mathbf{R}\neq\mathbf{0}$, and 2) $\mathbf{A}$ does not have
full column rank and $\text{\ensuremath{\mathbf{R}}}\succ\mathbf{0}$. 

\section{Closed-Form Solution Derivation\label{sec:Problem and Solution}}

In this section, we present the derivation process of the closed-form
solution for the unified formulation of RAB, i.e., problem \eqref{eq:opt prob}.
The basic assumptions for $\mathbf{a}$ and $\varepsilon$ are $\mathbf{a}\neq\mathbf{0}$
and $\varepsilon>0$. The assumptions for $\mathbf{R}$ and $\mathbf{A}$
are consistent with those discussed at the end of the previous section. 

\subsection{Diagonalization Transform }

Diagonalization transform comprises two steps: converting the ellipsoidal
conic constraint into the standard SOC form and diagonalizing the
objective coefficient matrix to fully decouple the variable elements. 

\subsubsection{Full Column Rank Case}

When $\mathbf{A}$ has full column rank, we perform Cholesky decomposition
on $\mathbf{A}^{H}\mathbf{A}$ to find an invertible square matrix
$\mathbf{B}$ such that $\mathbf{B}^{H}\mathbf{B}=\mathbf{A}^{H}\mathbf{A}$.
Thus, we can rewrite the first constraint as 
\begin{equation}
\mathbf{w}^{H}\mathbf{a}\geq\varepsilon\left\Vert \mathbf{B}\mathbf{w}\right\Vert _{2}+1.
\end{equation}
The objective and the constraints preserve their forms under a linear
mapping in $\mathbf{w}$:
\begin{equation}
\mathbf{w}^{H}\mathbf{R}\mathbf{w}=\left(\mathbf{B}\mathbf{w}\right)^{H}\cdot\left[\left(\mathbf{B}^{-1}\right)^{H}\mathbf{R}\mathbf{B}^{-1}\right]\cdot\left(\mathbf{B}\mathbf{w}\right),
\end{equation}
and 
\begin{equation}
\begin{aligned} & \mathbf{w}^{H}\mathbf{a}\geq\varepsilon\left\Vert \mathbf{B}\mathbf{w}\right\Vert _{2}+1\\
\Longleftrightarrow & \left(\mathbf{B}\mathbf{w}\right)^{H}\left(\mathbf{B}^{-1}\right)^{H}\mathbf{a}\geq\varepsilon\left\Vert \mathbf{B}\mathbf{w}\right\Vert _{2}+1\\
 & \mathrm{Im}\left[\mathbf{w}^{H}\mathbf{a}\right]=0\\
\Longleftrightarrow & \mathrm{Im}\left[\left(\mathbf{B}\mathbf{w}\right)^{H}\left(\mathbf{B}^{-1}\right)^{H}\mathbf{a}\right]=0.
\end{aligned}
\end{equation}
So we can regard $\tilde{\mathbf{w}}=\mathbf{B}\mathbf{w}$ as the
new optimization variable and recast problem \eqref{eq:opt prob}
as 
\begin{equation}
\begin{aligned} & \underset{\tilde{\mathbf{w}}\in\mathbb{C}^{N}}{\mathsf{minimize}} &  & \tilde{\mathbf{w}}^{H}\tilde{\mathbf{R}}\tilde{\mathbf{w}}\\
 & \mathsf{subject\ to} &  & \tilde{\mathbf{w}}^{H}\tilde{\mathbf{a}}\geq\varepsilon\left\Vert \tilde{\mathbf{w}}\right\Vert _{2}+1\\
 &  &  & \mathrm{Im}\left[\tilde{\mathbf{w}}^{H}\tilde{\mathbf{a}}\right]=0
\end{aligned}
\label{eq:opt prob 1}
\end{equation}
where $\tilde{\mathbf{R}}=\left(\mathbf{B}^{-1}\right)^{H}\mathbf{R}\mathbf{B}^{-1}$
and $\tilde{\mathbf{a}}=\left(\mathbf{B}^{-1}\right)^{H}\mathbf{a}$. 

Next, we diagonalize $\tilde{\mathbf{R}}$ with EVD: $\tilde{\mathbf{R}}=\mathbf{U}\boldsymbol{\Lambda}\mathbf{U}^{H}$
where $\mathbf{U}$ is unitary and $\boldsymbol{\Lambda}=\mathrm{Diag}\left(\boldsymbol{\lambda}\right)$
with components of $\boldsymbol{\lambda}$ in decreasing order. By
assumption, we have $\tilde{\mathbf{R}}\succeq\mathbf{0}$ but $\tilde{\mathbf{R}}\neq\mathbf{0}$,
so there exists a strictly positive element in $\boldsymbol{\lambda}$.
The objective of \eqref{eq:opt prob 1} becomes 
\begin{equation}
\tilde{\mathbf{w}}^{H}\tilde{\mathbf{R}}\tilde{\mathbf{w}}=\left(\mathbf{U}^{H}\tilde{\mathbf{w}}\right)^{H}\boldsymbol{\Lambda}\left(\mathbf{U}^{H}\tilde{\mathbf{w}}\right).
\end{equation}
The constraints are unitary invariant:
\begin{equation}
\begin{aligned} & \tilde{\mathbf{w}}^{H}\tilde{\mathbf{a}}\geq\varepsilon\left\Vert \tilde{\mathbf{w}}\right\Vert _{2}+1\\
\Longleftrightarrow & \left(\mathbf{U}^{H}\tilde{\mathbf{w}}\right)^{H}\mathbf{U}^{H}\tilde{\mathbf{a}}\geq\varepsilon\left\Vert \mathbf{U}^{H}\tilde{\mathbf{w}}\right\Vert _{2}+1\\
 & \mathrm{Im}\left[\tilde{\mathbf{w}}^{H}\tilde{\mathbf{a}}\right]=0\\
\Longleftrightarrow & \mathrm{Im}\left[\left(\mathbf{U}^{H}\tilde{\mathbf{w}}\right)^{H}\mathbf{U}^{H}\tilde{\mathbf{a}}\right]=0.
\end{aligned}
\end{equation}
We can rotate the variable $\tilde{\mathbf{w}}$ as $\mathbf{U}^{H}\tilde{\mathbf{w}}\triangleq\mathbf{v}$
and transform problem \eqref{eq:opt prob 1} into 
\begin{equation}
\begin{aligned} & \underset{\mathbf{v}\in\mathbb{C}^{N}}{\mathsf{minimize}} &  & \mathbf{v}^{H}\boldsymbol{\Lambda}\mathbf{v}\\
 & \mathsf{subject\ to} &  & \mathbf{v}^{H}\mathbf{b}\geq\varepsilon\left\Vert \mathbf{v}\right\Vert _{2}+1\\
 &  &  & \mathrm{Im}\left[\mathbf{v}^{H}\mathbf{b}\right]=0
\end{aligned}
\label{eq:opt prob 2}
\end{equation}
where $\mathbf{b}=\mathbf{U}^{H}\tilde{\mathbf{a}}$. 

\subsubsection{Non-Full Column Rank Case}

When the columns of $\mathbf{A}$ are not linearly independent, we
denote the column rank of $\mathbf{A}$ as $r\left(<N\right)$. We
perform compact singular value decomposition (SVD) on $\mathbf{A}$
and take the singular values $\mathbf{d}$ (elementwisely nonzero
and in decreasing order) as well as the corresponding right singular
vectors $\mathbf{V}_{1}$. The orthogonal complement of $\mathbf{V}_{1}$
is represented by $\mathbf{V}_{2}$. Let $\mathbf{B}=\mathrm{Diag}\left(\mathbf{d}\right)\mathbf{V}_{1}^{H}$,
and $\mathbf{B}^{H}\mathbf{B}=\mathbf{A}^{H}\mathbf{A}$ is satisfied.
Note that any vector $\mathbf{w}\in\mathbb{C}^{N}$ can be uniquely
decomposed into two orthogonal components: $\mathbf{w}=\mathbf{V}_{1}\left[\mathrm{Diag}\left(\mathbf{d}\right)\right]^{-1}\mathbf{\tilde{w}}_{1}+\mathbf{V}_{2}\tilde{\mathbf{w}}_{2}$
with $\tilde{\mathbf{w}}_{1}\in\mathbb{C}^{r}$ and $\tilde{\mathbf{w}}_{2}\in\mathbb{C}^{N-r}$.
The transformation of the ellipsoidal conic constraint into the standard
SOC form can be realized with the following lemma. 
\begin{lem}
\label{lem: ellip2standard soc}When the column rank of $\mathbf{A}$
is not full, the unified formulation is equivalent to 
\begin{equation}
\begin{aligned} & \underset{\tilde{\mathbf{w}}\in\mathbb{C}^{N}}{\mathsf{minimize}} &  & \tilde{\mathbf{w}}^{H}\tilde{\mathbf{R}}\tilde{\mathbf{w}}\\
 & \mathsf{subject\ to} &  & \tilde{\mathbf{w}}^{H}\tilde{\mathbf{a}}\geq\varepsilon\left\Vert \tilde{\mathbf{w}}_{1}\right\Vert _{2}+1\\
 &  &  & \mathrm{Im}\left[\tilde{\mathbf{w}}^{H}\tilde{\mathbf{a}}\right]=0\\
 &  &  & \tilde{\mathbf{w}}=\left[\mathbf{\tilde{w}}_{1}^{T},\tilde{\mathbf{w}}_{2}^{T}\right]^{T}
\end{aligned}
\label{eq:opt prob 1 nonfull}
\end{equation}
where $\tilde{\mathbf{a}}=\left[\mathbf{\tilde{a}}_{1}^{T},\tilde{\mathbf{a}}_{2}^{T}\right]^{T}$
with $\tilde{\mathbf{a}}_{1}=\left[\mathrm{Diag}\left(\mathbf{d}\right)\right]^{-1}\mathbf{V}_{1}^{H}\mathbf{a}$
and $\tilde{\mathbf{a}}_{2}=\mathbf{V}_{2}^{H}\mathbf{a}$, and $\tilde{\mathbf{R}}=\tilde{\mathbf{B}}^{H}\mathbf{R}\tilde{\mathbf{B}}$
with $\tilde{\mathbf{B}}=\left[\mathbf{V}_{1}\left[\mathrm{Diag}\left(\mathbf{d}\right)\right]^{-1},\mathbf{V}_{2}\right]$. 
\end{lem}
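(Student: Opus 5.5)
The plan is to show that the linear change of variables $\mathbf{w}=\tilde{\mathbf{B}}\tilde{\mathbf{w}}$, with $\tilde{\mathbf{B}}=\left[\mathbf{V}_{1}\left[\mathrm{Diag}\left(\mathbf{d}\right)\right]^{-1},\mathbf{V}_{2}\right]$, is a bijection of $\mathbb{C}^{N}$. We will then check that, under this map, the objective and both constraints of \eqref{eq:opt prob} become exactly those of \eqref{eq:opt prob 1 nonfull}. Since feasible sets and objective values correspond one-to-one, the two problems have the same optimal value, and their optimal solutions correspond via $\mathbf{w}^{\star}=\tilde{\mathbf{B}}\tilde{\mathbf{w}}^{\star}$.

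\textbf{Step 1: invertibility.} The matrix $\left[\mathbf{V}_{1},\mathbf{V}_{2}\right]$ is unitary because $\mathbf{V}_{2}$ spans the orthogonal complement of the range of $\mathbf{V}_{1}$. We can write $\tilde{\mathbf{B}}=\left[\mathbf{V}_{1},\mathbf{V}_{2}\right]\mathrm{BlkDiag}\left(\left\{ \left[\mathrm{Diag}\left(\mathbf{d}\right)\right]^{-1},\mathbf{I}_{N-r}\right\} \right)$, and all entries of $\mathbf{d}$ are nonzero. Hence $\tilde{\mathbf{B}}$ is invertible, and $\tilde{\mathbf{B}}^{-1}=\left[\mathrm{Diag}\left(\mathbf{d}\right)\mathbf{V}_{1}^{H};\,\mathbf{V}_{2}^{H}\right]$. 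This is precisely the unique orthogonal decomposition $\mathbf{w}=\mathbf{V}_{1}\left[\mathrm{Diag}\left(\mathbf{d}\right)\right]^{-1}\tilde{\mathbf{w}}_{1}+\mathbf{V}_{2}\tilde{\mathbf{w}}_{2}$ stated before the lemma.

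\textbf{Step 2: the objective.} Substitution directly gives $\mathbf{w}^{H}\mathbf{R}\mathbf{w}=\tilde{\mathbf{w}}^{H}\tilde{\mathbf{B}}^{H}\mathbf{R}\tilde{\mathbf{B}}\tilde{\mathbf{w}}=\tilde{\mathbf{w}}^{H}\tilde{\mathbf{R}}\tilde{\mathbf{w}}$.

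\textbf{Step 3: the linear term.} We have $\mathbf{w}^{H}\mathbf{a}=\tilde{\mathbf{w}}^{H}\tilde{\mathbf{B}}^{H}\mathbf{a}=\tilde{\mathbf{w}}_{1}^{H}\left[\mathrm{Diag}\left(\mathbf{d}\right)\right]^{-1}\mathbf{V}_{1}^{H}\mathbf{a}+\tilde{\mathbf{w}}_{2}^{H}\mathbf{V}_{2}^{H}\mathbf{a}=\tilde{\mathbf{w}}^{H}\tilde{\mathbf{a}}$, using that $\mathbf{d}$ is real. Consequently the real-part inequality and the condition $\mathrm{Im}\left[\mathbf{w}^{H}\mathbf{a}\right]=0$ both transfer verbatim.

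\textbf{Step 4: the norm term.} This is the only step that uses the rank deficiency, and it is the main point to handle carefully. The compact SVD gives $\mathbf{A}=\mathbf{U}_{1}\mathrm{Diag}\left(\mathbf{d}\right)\mathbf{V}_{1}^{H}$, where $\mathbf{U}_{1}$ has orthonormal columns. Therefore $\left\Vert \mathbf{A}\mathbf{w}\right\Vert _{2}=\left\Vert \mathrm{Diag}\left(\mathbf{d}\right)\mathbf{V}_{1}^{H}\mathbf{w}\right\Vert _{2}$; equivalently, $\left\Vert \mathbf{A}\mathbf{w}\right\Vert _{2}=\left\Vert \mathbf{B}\mathbf{w}\right\Vert _{2}$ because $\mathbf{B}^{H}\mathbf{B}=\mathbf{A}^{H}\mathbf{A}$. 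Next, substitute the decomposition of $\mathbf{w}$ and use $\mathbf{V}_{1}^{H}\mathbf{V}_{1}=\mathbf{I}_{r}$ and $\mathbf{V}_{1}^{H}\mathbf{V}_{2}=\mathbf{0}$. This gives $\mathrm{Diag}\left(\mathbf{d}\right)\mathbf{V}_{1}^{H}\mathbf{w}=\tilde{\mathbf{w}}_{1}$, so $\left\Vert \mathbf{A}\mathbf{w}\right\Vert _{2}=\left\Vert \tilde{\mathbf{w}}_{1}\right\Vert _{2}$, with $\tilde{\mathbf{w}}_{2}$ dropping out entirely.

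Combining Steps 2–4, $\mathbf{w}$ is feasible for \eqref{eq:opt prob} if and only if $\tilde{\mathbf{w}}=\tilde{\mathbf{B}}^{-1}\mathbf{w}$ is feasible for \eqref{eq:opt prob 1 nonfull}, and the two objective values coincide. Together with the bijectivity from Step 1, this proves the equivalence.
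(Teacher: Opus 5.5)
Your proposal is correct and follows essentially the same route as the paper. Both substitute $\mathbf{w}=\tilde{\mathbf{B}}\tilde{\mathbf{w}}$, use $\mathbf{V}_{1}^{H}\mathbf{V}_{1}=\mathbf{I}$ and $\mathbf{V}_{1}^{H}\mathbf{V}_{2}=\mathbf{0}$ to get $\Vert\mathbf{B}\mathbf{w}\Vert_{2}=\Vert\tilde{\mathbf{w}}_{1}\Vert_{2}$, split $\mathbf{w}^{H}\mathbf{a}=\tilde{\mathbf{w}}^{H}\tilde{\mathbf{a}}$, and set $\tilde{\mathbf{R}}=\tilde{\mathbf{B}}^{H}\mathbf{R}\tilde{\mathbf{B}}$. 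You are more explicit than the paper in checking that $\tilde{\mathbf{B}}$ is invertible and that $\Vert\mathbf{A}\mathbf{w}\Vert_{2}=\Vert\mathrm{Diag}(\mathbf{d})\mathbf{V}_{1}^{H}\mathbf{w}\Vert_{2}$, which the paper takes for granted.
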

\begin{IEEEproof}
See Appendix \ref{sec:Proof-of-Lemma ellip2standard soc} for the
detailed proof. 
\end{IEEEproof}
Next, we partition $\tilde{\mathbf{R}}$ according to the dimensions
of $\tilde{\mathbf{w}}_{1}\in\mathbb{C}^{r}$ and $\tilde{\mathbf{w}}_{2}\in\mathbb{C}^{N-r}$:
$\left[\begin{array}{cc}
\tilde{\mathbf{R}}_{11} & \tilde{\mathbf{R}}_{12}\\
\tilde{\mathbf{R}}_{21} & \tilde{\mathbf{R}}_{22}
\end{array}\right]$. We diagonalize $\tilde{\mathbf{R}}_{22}$ and its Schur complement
$\tilde{\mathbf{S}}_{11}=\tilde{\mathbf{R}}_{11}-\tilde{\mathbf{R}}_{12}\tilde{\mathbf{R}}_{22}^{-1}\tilde{\mathbf{R}}_{21}$
with EVD: $\tilde{\mathbf{S}}_{11}=\mathbf{U}_{1}\mathbf{\boldsymbol{\Lambda}}_{1}\mathbf{U}_{1}^{H}$
and $\tilde{\mathbf{R}}_{22}=\mathbf{U}_{2}\mathbf{\boldsymbol{\Lambda}}_{2}\mathbf{U}_{2}^{H}$,
where $\boldsymbol{\Lambda}_{1}=\mathrm{Diag}\left(\boldsymbol{\lambda}_{1}\right)$
and $\boldsymbol{\Lambda}_{2}=\mathrm{Diag}\left(\boldsymbol{\lambda}_{2}\right)$
with components of $\boldsymbol{\lambda}_{1}$ and $\boldsymbol{\lambda}_{2}$
in decreasing order. It can be observed that any $\tilde{\mathbf{w}}=\left[\mathbf{\tilde{w}}_{1}^{T},\tilde{\mathbf{w}}_{2}^{T}\right]^{T}\in\mathbb{C}^{N}$
can be expressed as 
\begin{equation}
\left[\begin{array}{c}
\tilde{\mathbf{w}}_{1}\\
\tilde{\mathbf{w}}_{2}
\end{array}\right]=\left[\begin{array}{cc}
\mathbf{U}_{1} & \mathbf{0}\\
-\tilde{\mathbf{R}}_{22}^{-1}\tilde{\mathbf{R}}_{21}\mathbf{U}_{1} & \mathbf{U}_{2}
\end{array}\right]\left[\begin{array}{c}
\mathbf{v}_{1}\\
\mathbf{v}_{2}
\end{array}\right]\label{eq:lin trans}
\end{equation}
where $\mathbf{v}_{1}\in\mathbb{C}^{r}$ and $\mathbf{v}_{2}\in\mathbb{C}^{N-r}$
because the transformation matrix is invertible. By changing the optimization
variable to $\mathbf{v}$, the objective can be fully decoupled with
respect to each scalar component. 
\begin{thm}
\label{thm:obj decouple}Through the linear transformation in \eqref{eq:lin trans},
problem \eqref{eq:opt prob 1 nonfull} can be equivalently rewritten
into 
\begin{equation}
\begin{aligned} & \underset{\mathbf{v}\in\mathbb{C}^{N}}{\mathsf{minimize}} &  & \mathbf{v}^{H}\boldsymbol{\Lambda}\mathbf{v}\\
 & \mathsf{subject\ to} &  & \mathbf{v}^{H}\mathbf{b}\geq\varepsilon\left\Vert \mathbf{v}_{1}\right\Vert _{2}+1\\
 &  &  & \mathrm{Im}\left[\mathbf{v}^{H}\mathbf{b}\right]=0\\
 &  &  & \mathbf{v}=\left[\mathbf{v}_{1}^{T},\mathbf{v}_{2}^{T}\right]^{T}
\end{aligned}
\label{eq:opt prob 2 nonfull}
\end{equation}
where $\mathbf{b}=\left[\mathbf{b}_{1}^{T},\mathbf{b}_{2}^{T}\right]^{T}\left(\neq\mathbf{0}\right)$
with $\mathbf{b}_{1}=\mathbf{U}_{1}^{H}\left(\tilde{\mathbf{a}}_{1}-\tilde{\mathbf{R}}_{21}^{H}\tilde{\mathbf{R}}_{22}^{-1}\tilde{\mathbf{a}}_{2}\right)$
and $\mathbf{b}_{2}=\mathbf{U}_{2}^{H}\tilde{\mathbf{a}}_{2}$, and
$\boldsymbol{\Lambda}=\mathrm{BlkDiag}\left(\left\{ \mathbf{\boldsymbol{\Lambda}}_{1},\mathbf{\boldsymbol{\Lambda}}_{2}\right\} \right)$. 
\end{thm}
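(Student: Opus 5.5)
Write $\mathbf{T}=\left[\begin{array}{cc}\mathbf{U}_{1} & \mathbf{0}\\ -\tilde{\mathbf{R}}_{22}^{-1}\tilde{\mathbf{R}}_{21}\mathbf{U}_{1} & \mathbf{U}_{2}\end{array}\right]$ for the transformation matrix in \eqref{eq:lin trans}, so that $\tilde{\mathbf{w}}=\mathbf{T}\mathbf{v}$. The plan is a direct substitution argument, checking the objective, the constraints, and the claim $\mathbf{b}\neq\mathbf{0}$ in turn.

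First, bijectivity. $\mathbf{T}$ is block lower-triangular with unitary diagonal blocks $\mathbf{U}_{1}$ and $\mathbf{U}_{2}$, hence invertible. This requires $\tilde{\mathbf{R}}_{22}$ to be invertible. In the non-full column rank scenario we have $\mathbf{R}\succ\mathbf{0}$, and $\tilde{\mathbf{B}}$ is invertible (its columns are $\mathbf{V}_{1}$ scaled by the nonzero $\mathbf{d}$, together with the orthogonal complement $\mathbf{V}_{2}$). Therefore $\tilde{\mathbf{R}}=\tilde{\mathbf{B}}^{H}\mathbf{R}\tilde{\mathbf{B}}\succ\mathbf{0}$, which gives $\tilde{\mathbf{R}}_{22}\succ\mathbf{0}$ and $\tilde{\mathbf{S}}_{11}\succ\mathbf{0}$. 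So $\mathbf{v}\mapsto\tilde{\mathbf{w}}$ is a bijection of $\mathbb{C}^{N}$, and the feasible sets and objective values correspond exactly.

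Second, the objective, via the congruence $\mathbf{T}^{H}\tilde{\mathbf{R}}\mathbf{T}$. Let $\mathbf{L}=\left[\begin{array}{cc}\mathbf{I} & \mathbf{0}\\ -\tilde{\mathbf{R}}_{22}^{-1}\tilde{\mathbf{R}}_{21} & \mathbf{I}\end{array}\right]$, so that $\mathbf{T}=\mathbf{L}\,\mathrm{BlkDiag}(\{\mathbf{U}_{1},\mathbf{U}_{2}\})$. The block Gaussian elimination identity gives $\mathbf{L}^{H}\tilde{\mathbf{R}}\mathbf{L}=\mathrm{BlkDiag}(\{\tilde{\mathbf{S}}_{11},\tilde{\mathbf{R}}_{22}\})$; the off-diagonal block is $\tilde{\mathbf{R}}_{12}-\tilde{\mathbf{R}}_{12}=\mathbf{0}$, using $\tilde{\mathbf{R}}_{21}=\tilde{\mathbf{R}}_{12}^{H}$. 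Conjugating by the block unitary then yields $\mathbf{T}^{H}\tilde{\mathbf{R}}\mathbf{T}=\mathrm{BlkDiag}(\{\boldsymbol{\Lambda}_{1},\boldsymbol{\Lambda}_{2}\})=\boldsymbol{\Lambda}$.

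Third, the constraints. Since $\tilde{\mathbf{w}}^{H}\tilde{\mathbf{a}}=\mathbf{v}^{H}\mathbf{T}^{H}\tilde{\mathbf{a}}$, we set $\mathbf{b}=\mathbf{T}^{H}\tilde{\mathbf{a}}$. Computing the blocks gives $\mathbf{b}_{1}=\mathbf{U}_{1}^{H}(\tilde{\mathbf{a}}_{1}-\tilde{\mathbf{R}}_{21}^{H}\tilde{\mathbf{R}}_{22}^{-1}\tilde{\mathbf{a}}_{2})$, using $(\tilde{\mathbf{R}}_{22}^{-1})^{H}=\tilde{\mathbf{R}}_{22}^{-1}$, and $\mathbf{b}_{2}=\mathbf{U}_{2}^{H}\tilde{\mathbf{a}}_{2}$, matching the statement. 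The first block row of $\mathbf{T}$ gives $\tilde{\mathbf{w}}_{1}=\mathbf{U}_{1}\mathbf{v}_{1}$, so $\|\tilde{\mathbf{w}}_{1}\|_{2}=\|\mathbf{v}_{1}\|_{2}$ by unitary invariance. This is the key point: the lower-triangular shape of $\mathbf{T}$ keeps the SOC norm depending only on $\mathbf{v}_{1}$. The imaginary-part constraint transfers verbatim.

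Finally, $\mathbf{b}\neq\mathbf{0}$ holds because $\mathbf{T}$ is invertible and $\tilde{\mathbf{a}}=\tilde{\mathbf{B}}^{-1}\mathbf{a}$-type, i.e., $\tilde{\mathbf{a}}$ is the image of $\mathbf{a}\neq\mathbf{0}$ under an injective map ($\tilde{\mathbf{a}}=\tilde{\mathbf{B}}^{H}\mathbf{a}$ with $\tilde{\mathbf{B}}$ invertible). None of these steps is a real obstacle. The only care needed is justifying invertibility of $\tilde{\mathbf{R}}_{22}$ from $\mathbf{R}\succ\mathbf{0}$, and keeping track of Hermitian transposes in the expression for $\mathbf{b}_{1}$.
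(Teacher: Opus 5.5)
Your proof is correct and follows essentially the same route as the paper. The paper also computes the congruence of $\tilde{\mathbf{R}}$ by your $\mathbf{T}$ to obtain $\mathrm{BlkDiag}(\{\boldsymbol{\Lambda}_{1},\boldsymbol{\Lambda}_{2}\})$, multiplying the three block matrices out directly instead of factoring $\mathbf{T}=\mathbf{L}\,\mathrm{BlkDiag}(\{\mathbf{U}_{1},\mathbf{U}_{2}\})$. It then substitutes $\tilde{\mathbf{w}}_{1}=\mathbf{U}_{1}\mathbf{v}_{1}$ and $\tilde{\mathbf{w}}^{H}\tilde{\mathbf{a}}=\mathbf{v}_{1}^{H}\mathbf{b}_{1}+\mathbf{v}_{2}^{H}\mathbf{b}_{2}$ into the constraints, as you do.

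Your extra checks go beyond what the paper writes out and are correct: the invertibility of $\tilde{\mathbf{R}}_{22}$ (from $\mathbf{R}\succ\mathbf{0}$ and invertibility of $\tilde{\mathbf{B}}$), and $\mathbf{b}\neq\mathbf{0}$. For the latter, note that $\tilde{\mathbf{a}}=\tilde{\mathbf{B}}^{H}\mathbf{a}$, not $\tilde{\mathbf{B}}^{-1}\mathbf{a}$ as your first phrasing suggests.
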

\begin{IEEEproof}
See Appendix \ref{sec:Proof-of-Theorem obj decouple} for the detailed
proof. 
\end{IEEEproof}

\subsection{Phase Alignment }

It is evident that the objective is phase invariant in $\mathbf{v}$
for both \eqref{eq:opt prob 2} and \eqref{eq:opt prob 2 nonfull}.
This observation leads to the lemma below. 
\begin{lem}
\label{lem:phase align}Suppose that there exists a solution to problem
\eqref{eq:opt prob 2} or \eqref{eq:opt prob 2 nonfull}. The optimal
phase of $\mathbf{v}$ is given as 
\begin{equation}
\arg\left(\mathbf{v}^{\star}\right)=\arg\left(\mathbf{b}\right).
\end{equation}
To ensure mathematical consistency, we define the phases of zero-valued
elements as zero. 
\end{lem}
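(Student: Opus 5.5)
Take any optimal $\mathbf{v}^{\star}$ of \eqref{eq:opt prob 2} (or \eqref{eq:opt prob 2 nonfull}) and compare it with its phase-aligned version. Define $\hat{\mathbf{v}}$ componentwise by
\[
\hat{v}_{n}=\left|v_{n}^{\star}\right|\exp\left(j\arg\left(b_{n}\right)\right),
\]
keeping the moduli and replacing the phases by those of $\mathbf{b}$. Three facts follow immediately:
\begin{itemize}
\item The objective $\mathbf{v}^{H}\boldsymbol{\Lambda}\mathbf{v}=\sum_{n}\lambda_{n}\left|v_{n}\right|^{2}$ depends only on moduli, since $\boldsymbol{\Lambda}$ is diagonal.
\item The norms $\left\Vert \mathbf{v}\right\Vert _{2}$ and $\left\Vert \mathbf{v}_{1}\right\Vert _{2}$ also depend only on moduli.
\item $\hat{\mathbf{v}}^{H}\mathbf{b}=\sum_{n}\left|v_{n}^{\star}\right|\left|b_{n}\right|$ is real, so the imaginary-part constraint holds automatically.
\end{itemize}
By the triangle inequality,
\[
\hat{\mathbf{v}}^{H}\mathbf{b}\geq\left|\left(\mathbf{v}^{\star}\right)^{H}\mathbf{b}\right|=\left(\mathbf{v}^{\star}\right)^{H}\mathbf{b}\geq\varepsilon\left\Vert \mathbf{v}^{\star}\right\Vert _{2}+1 .
\]
Hence $\hat{\mathbf{v}}$ is feasible with the same objective value, so it is also optimal.

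\textbf{Phase optimality of $\mathbf{v}^{\star}$ itself.} I will argue by contradiction. Suppose some component with $b_{n}\neq0$ and $v_{n}^{\star}\neq0$ has $\arg(v_{n}^{\star})\neq\arg(b_{n})$. Then the triangle inequality above is strict, so $\hat{\mathbf{v}}$ satisfies the cone constraint with slack $\gamma>1$ in the normalized sense. The constraint is positively homogeneous apart from the constant $1$: $\mathbf{v}^{H}\mathbf{b}-\varepsilon\left\Vert \cdot\right\Vert \geq1$. So we can scale to $\hat{\mathbf{v}}/\gamma$ with
\[
\gamma=\hat{\mathbf{v}}^{H}\mathbf{b}-\varepsilon\left\Vert \hat{\mathbf{v}}\right\Vert >1,
\]
which keeps feasibility and multiplies the objective by $1/\gamma^{2}$. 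This is the same trick the paper uses after Theorem \ref{thm:qcqp2socp}. It gives a strictly smaller objective unless the objective at $\hat{\mathbf{v}}$ is zero.

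\textbf{Main obstacle.} The difficulty is exactly the degenerate situations where this strict-improvement argument fails:
\begin{itemize}
\item the objective value is $0$, which is possible when $\boldsymbol{\Lambda}$ has zero entries, i.e., $\mathbf{R}$ is rank deficient;
\item $v_{n}^{\star}=0$ or $b_{n}=0$, where the phase is ambiguous.
\end{itemize}
For zero-valued entries the stated convention (phase defined as zero) handles $v_{n}^{\star}=0$. For $b_{n}=0$ with $v_{n}^{\star}\neq0$, I would argue that shrinking such components to zero is also feasible and does not increase the objective, because it only decreases the norm term while leaving $\mathbf{v}^{H}\mathbf{b}$ unchanged. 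For the zero-objective degenerate case, I would interpret the lemma as asserting that an optimal solution with aligned phase exists. This is what the subsequent KKT stage needs: we may restrict to $\mathbf{v}=\left|\mathbf{v}\right|\odot\exp\left(j\arg\left(\mathbf{b}\right)\right)$ without loss of optimality, which the construction of $\hat{\mathbf{v}}$ already proves unconditionally. I would state the strict version (every optimum is aligned) only when the optimal value is positive.

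Finally, the result reduces both problems to real nonnegative variables $\left|v_{n}\right|$ with the constraint
\[
\sum_{n}\left|b_{n}\right|\left|v_{n}\right|\geq\varepsilon\left\Vert \left|\mathbf{v}\right|\right\Vert +1 .
\]
This real-valued form sets up the KKT stage.
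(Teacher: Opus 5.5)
Your argument is essentially the paper's: align the phases to $\mathbf{b}$, use the strict inequality $\mathrm{Re}[(\mathbf{v}^{\star})^{H}\mathbf{b}]<|\mathbf{v}^{\star}|^{T}|\mathbf{b}|$ to create slack, and shrink to lower the objective; the only cosmetic difference is that the paper shrinks by $\alpha=\mathrm{Re}[(\mathbf{v}^{\star})^{H}\mathbf{b}]/(|\mathbf{v}^{\star}|^{T}|\mathbf{b}|)$, holding $\mathbf{v}^{H}\mathbf{b}$ fixed, rather than by your $1/\gamma$. Your caveats are well-founded and make your version more rigorous than the paper's: its final strict inequality $\sum_{n}\lambda_{n}\alpha^{2}|v_{n}^{\star}|^{2}<\sum_{n}\lambda_{n}|v_{n}^{\star}|^{2}$ tacitly needs a positive optimal value (when $\sum_{n\in\mathcal{I}_{0}}c_{n}^{2}>\varepsilon^{2}$, non-aligned optima can genuinely exist), and its equality characterization ignores zero entries, so your unconditional claim that the aligned $\hat{\mathbf{v}}$ is optimal is exactly what the subsequent reduction to the real-valued problem needs.
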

\begin{IEEEproof}
See Appendix \ref{sec:Proof-of-Lemma phase align} for the detailed
proof. 
\end{IEEEproof}
With Lemma \ref{lem:phase align}, we can express the optimal solution
of $\mathbf{v}$ as $\mathbf{u}\odot\exp\left(j\arg\left(\mathbf{b}\right)\right)$
where $\mathbf{u}$ stands for the magnitude of $\mathbf{v}$. The
complex-valued problem can thus be simplified to the real field:
\begin{equation}
\begin{aligned} & \underset{\mathbf{u}\in\mathbb{R}^{N}}{\mathsf{minimize}} &  & \sum_{n=1}^{N}\lambda_{n}u_{n}^{2}\\
 & \mathsf{subject\ to} &  & \sum_{n=1}^{N}c_{n}u_{n}\geq\varepsilon\sqrt{\sum_{n=1}^{N}u_{n}^{2}}+1\\
 &  &  & \mathbf{u}\geq\mathbf{0}
\end{aligned}
\label{eq:opt prob 3}
\end{equation}
for full column rank $\mathbf{A}$ and 
\begin{equation}
\begin{aligned} & \underset{\mathbf{u}\in\mathbb{R}^{N}}{\mathsf{minimize}} &  & \sum_{n=1}^{N}\lambda_{n}u_{n}^{2}\\
 & \mathsf{subject\ to} &  & \sum_{n=1}^{N}c_{n}u_{n}\geq\varepsilon\sqrt{\sum_{n=1}^{r}u_{n}^{2}}+1\\
 &  &  & \mathbf{u}\geq\mathbf{0}
\end{aligned}
\label{eq:opt prob 3 nonfull}
\end{equation}
when the column rank of $\mathbf{A}$ is not full ($\boldsymbol{\Lambda}=\mathrm{BlkDiag}\left(\left\{ \mathbf{\boldsymbol{\Lambda}}_{1},\mathbf{\boldsymbol{\Lambda}}_{2}\right\} \right)$
can be compactly written as $\mathrm{Diag}\left(\boldsymbol{\lambda}\right)$).
For the former problem, the parameters $\boldsymbol{\lambda}$ and
$\mathbf{c}$ satisfy $\boldsymbol{\lambda}\geq0$, $\boldsymbol{\lambda}\neq\mathbf{0}$,
$\mathbf{c}=\left|\mathbf{b}\right|\geq\mathbf{0}$, and $\mathbf{c}\neq\mathbf{0}$.
For the latter problem, the parameter $\boldsymbol{\lambda}$ is elementwisely
positive because $\text{\ensuremath{\mathbf{R}}}\succ\mathbf{0}$.
Both problems can be relaxed by dropping the constraint $\mathbf{u}\geq\mathbf{0}$.
In fact, this constraint is redundant. The optimal solutions to the
relaxed problems automatically satisfy the dropped constraint, which
will be verified later. 

\subsection{KKT Solution }

\subsubsection{Full Column Rank Case}

Now we turn to the relaxed problem 
\begin{equation}
\begin{aligned} & \underset{\mathbf{u}\in\mathbb{R}^{N}}{\mathsf{minimize}} &  & \sum_{n=1}^{N}\lambda_{n}u_{n}^{2}\\
 & \mathsf{subject\ to} &  & \sum_{n=1}^{N}c_{n}u_{n}\geq\varepsilon\sqrt{\sum_{n=1}^{N}u_{n}^{2}}+1
\end{aligned}
\label{eq:opt prob 3 relax}
\end{equation}
and derive its Lagrangian function:
\begin{equation}
\begin{aligned}\mathcal{L}\left(\mathbf{u},\mu\right)= & \sum_{n=1}^{N}\lambda_{n}u_{n}^{2}-\mu\left(\sum_{n=1}^{N}c_{n}u_{n}-\varepsilon\sqrt{\sum_{n=1}^{N}u_{n}^{2}}-1\right)\end{aligned}
\end{equation}
with $\mu\geq0$ being the Lagrange multiplier. Obviously, $\mathbf{u}=\mathbf{0}$
is never an optimal solution since it is not even feasible. So $\mathcal{L}\left(\mathbf{u},\mu\right)$
must be differentiable at the optimal $\mathbf{u}$. The Karush-Kuhn-Tucker
(KKT) conditions of problem \eqref{eq:opt prob 3 relax}under $\mathbf{u}\neq\mathbf{0}$
are given by: 
\begin{itemize}
\item Primal feasibility:
\end{itemize}
\begin{equation}
\sum_{n=1}^{N}c_{n}u_{n}\geq\varepsilon\sqrt{\sum_{n=1}^{N}u_{n}^{2}}+1,
\end{equation}

\begin{itemize}
\item Dual feasibility:
\begin{equation}
\mu\geq0,
\end{equation}
\item Complementary slackness:
\begin{equation}
\mu\left(\sum_{n=1}^{N}c_{n}u_{n}-\varepsilon\sqrt{\sum_{n=1}^{N}u_{n}^{2}}-1\right)=0,
\end{equation}
\item Zero gradient of Lagrangian: 
\begin{equation}
\begin{aligned} & \frac{\partial}{\partial u_{n}}\mathcal{L}\left(\mathbf{u},\mu\right)=0,\forall n\\
\Longrightarrow & 2\lambda_{n}u_{n}-\mu\left(c_{n}-\varepsilon\frac{u_{n}}{\sqrt{\sum_{n=1}^{N}u_{n}^{2}}}\right)=0,\forall n.
\end{aligned}
\end{equation}
\end{itemize}
In order to solve this KKT system, we need to further discuss whether
the coefficient matrix $\mathbf{R}$ is full rank. 

\textbf{Full Rank Coefficient Matrix. }In this case, the optimal $\mu$
must be positive, otherwise we obtain $\mathbf{u}=\mathbf{0}$ from
the zero gradient condition, causing contradiction. Let $k=\left(\mu\varepsilon\right)/\sqrt{\sum_{n=1}^{N}u_{n}^{2}}$.
Since $\mathbf{u}\neq\mathbf{0}$ and $\mu>0$, we have $k>0$. Thus,
we can get an expression for $u_{n}$:
\begin{equation}
2\lambda_{n}u_{n}-\mu c_{n}+ku_{n}=0\Longrightarrow u_{n}=\frac{\mu c_{n}}{2\lambda_{n}+k},\label{eq:u expr}
\end{equation}
where $\lambda_{n}>0$ and $k>0$ guarantee a nonzero denominator.
With $u_{n}$, we can solve for $k$: 
\begin{equation}
\begin{aligned} & \frac{\mu\varepsilon}{k}=\sqrt{\sum_{n=1}^{N}u_{n}^{2}}=\sqrt{\sum_{n=1}^{N}\frac{\mu^{2}c_{n}^{2}}{\left(2\lambda_{n}+k\right)^{2}}}\\
\Longrightarrow & \frac{\mu^{2}\varepsilon^{2}}{k^{2}}=\sum_{n=1}^{N}\frac{\mu^{2}c_{n}^{2}}{\left(2\lambda_{n}+k\right)^{2}}\\
\Longrightarrow & \varepsilon^{2}=\sum_{n=1}^{N}\frac{c_{n}^{2}k^{2}}{\left(2\lambda_{n}+k\right)^{2}}=\sum_{n=1}^{N}\left(\frac{c_{n}k}{2\lambda_{n}+k}\right)^{2}\triangleq f\left(k\right)
\end{aligned}
\label{eq:k expr}
\end{equation}
It is straightforward to see that the function $g\left(k\right)=\frac{c_{n}k}{2\lambda_{n}+k}$
($c_{n}\geq0$ and $\lambda_{n}>0$) is monotonically nondecreasing
and remains nonnegative in $\left(0,+\infty\right)$, so $f\left(k\right)$
is also nondecreasing in this domain. Moreover, $\lim_{k\rightarrow0}f\left(k\right)=0$
and $\lim_{k\rightarrow+\infty}f\left(k\right)=\sum_{n=1}^{N}c_{n}^{2}$.
If $\varepsilon^{2}<\sum_{n=1}^{N}c_{n}^{2}$ holds, a solution for
$k$ exists and it can be uniquely obtained from the bisection method.
Once $k$ is obtained, we can subsequently solve for $\mu$ by complementary
slackness: {\footnotesize{
\begin{equation}
\begin{aligned} & \sum_{n=1}^{N}c_{n}u_{n}=\varepsilon\sqrt{\sum_{n=1}^{N}u_{n}^{2}}+1\\
\Longrightarrow & \sum_{n=1}^{N}\frac{\mu c_{n}^{2}}{2\lambda_{n}+k}\overset{(a)}{=}\frac{\mu\varepsilon^{2}}{k}+1\Longrightarrow\left[\sum_{n=1}^{N}\frac{c_{n}^{2}}{2\lambda_{n}+k}-\frac{\varepsilon^{2}}{k}\right]^{-1}=\mu\\
\overset{(b)}{\Longrightarrow} & \left[\sum_{n=1}^{N}\frac{2\lambda_{n}c_{n}^{2}}{\left(2\lambda_{n}+k\right)^{2}}\right]^{-1}=\mu,
\end{aligned}
\label{eq:mu expr}
\end{equation}
}}where (a) is because $\sqrt{\sum_{n=1}^{N}u_{n}^{2}}=\left(\mu\varepsilon\right)/k$
and (b) is due to $\frac{\varepsilon^{2}}{k}=\sum_{n=1}^{N}\frac{c_{n}^{2}k}{\left(2\lambda_{n}+k\right)^{2}}$
(cf. \eqref{eq:k expr}). From the last equation of \eqref{eq:mu expr},
$\mu$ is indeed positive. With $k$ and $\mu$, all $u_{n}$'s are
available and $\mathbf{u}\geq\mathbf{0}$ is automatically satisfied
from \eqref{eq:u expr}. 

\textbf{Rank Deficient Coefficient Matrix. }A rank-deficient$\mathbf{R}$
implies that $\exists n_{0}$, $\lambda_{n_{0}}=0$. We denote the
set containing all the indices of zero eigenvalues $\left\{ n_{0}|\lambda_{n_{0}}=0\right\} $
as $\mathcal{I}_{0}$. One potential KKT solution of $\mu$ is $\mu=0$.
In this case, we get $k=0$. According to the zero gradient condition
and the requirement $\mathbf{u}\geq\mathbf{0}$, 
\begin{equation}
u_{n}=\begin{cases}
0 & n\notin\mathcal{I}_{0}\\
\left|\mathrm{arbitrary}\right| & n\in\mathcal{I}_{0}.
\end{cases}\label{eq:u expr mu zero}
\end{equation}
Meanwhile, primal feasibility should be satisfied:
\begin{equation}
\begin{aligned} & \sum_{n=1}^{N}c_{n}u_{n}\geq\varepsilon\sqrt{\sum_{n=1}^{N}u_{n}^{2}}+1\\
\Longrightarrow & \sum_{n\in\mathcal{I}_{0}}c_{n}u_{n}\geq\varepsilon\sqrt{\sum_{n\in\mathcal{I}_{0}}u_{n}^{2}}+1.
\end{aligned}
\label{eq:KKT 1 ineq}
\end{equation}
For there to be a finite $\mathbf{u}$ satisfying \eqref{eq:KKT 1 ineq},
we introduce the following supporting lemma.
\begin{lem}
\label{lem:exist eqv}The necessary and sufficient condition for the
existence of \textbf{$\mathbf{x}$ }such that $\mathbf{x}^{T}\mathbf{y}\geq\varepsilon\left\Vert \mathbf{x}\right\Vert _{2}+1$
is $\left\Vert \mathbf{y}\right\Vert _{2}>\varepsilon$. 
\end{lem}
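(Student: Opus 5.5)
Both directions of Lemma \ref{lem:exist eqv} follow from the Cauchy--Schwarz inequality together with a scaling argument. Here $\mathbf{x},\mathbf{y}$ are real vectors of the same dimension and $\varepsilon>0$, as throughout this section.

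\emph{Necessity.} Suppose some $\mathbf{x}$ satisfies $\mathbf{x}^{T}\mathbf{y}\geq\varepsilon\left\Vert \mathbf{x}\right\Vert _{2}+1$. By Cauchy--Schwarz, $\left\Vert \mathbf{y}\right\Vert _{2}\left\Vert \mathbf{x}\right\Vert _{2}\geq\mathbf{x}^{T}\mathbf{y}\geq\varepsilon\left\Vert \mathbf{x}\right\Vert _{2}+1$. The right-hand side is at least $1>0$, so $\mathbf{x}\neq\mathbf{0}$. Rearranging gives $\left(\left\Vert \mathbf{y}\right\Vert _{2}-\varepsilon\right)\left\Vert \mathbf{x}\right\Vert _{2}\geq1>0$. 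Since $\left\Vert \mathbf{x}\right\Vert _{2}>0$, this forces $\left\Vert \mathbf{y}\right\Vert _{2}>\varepsilon$.

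\emph{Sufficiency.} Suppose $\left\Vert \mathbf{y}\right\Vert _{2}>\varepsilon$. Then $\mathbf{y}\neq\mathbf{0}$, and we align $\mathbf{x}$ with $\mathbf{y}$ by setting $\mathbf{x}=t\,\mathbf{y}/\left\Vert \mathbf{y}\right\Vert _{2}$ with $t>0$. The constraint becomes $t\left\Vert \mathbf{y}\right\Vert _{2}\geq\varepsilon t+1$, that is, $t\left(\left\Vert \mathbf{y}\right\Vert _{2}-\varepsilon\right)\geq1$. This holds for the choice $t=1/\left(\left\Vert \mathbf{y}\right\Vert _{2}-\varepsilon\right)$, and indeed for every larger $t$.

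The proof has no substantive obstacle. The only points needing care are noting that $\mathbf{x}=\mathbf{0}$ is excluded, which is what lets us divide by $\left\Vert \mathbf{x}\right\Vert _{2}$, and making sure the inequality is strict.

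In the application to \eqref{eq:KKT 1 ineq}, the lemma is applied to the restrictions $\mathbf{x}=(u_{n})_{n\in\mathcal{I}_{0}}$ and $\mathbf{y}=(c_{n})_{n\in\mathcal{I}_{0}}$. There one also wants the nonnegativity $\mathbf{u}\geq\mathbf{0}$. The witness $\mathbf{x}$ built in the sufficiency step is a positive multiple of $\mathbf{y}$, and $\mathbf{y}\geq\mathbf{0}$ because $\mathbf{c}=\left|\mathbf{b}\right|\geq\mathbf{0}$. So the constructed $\mathbf{x}$ is automatically nonnegative.
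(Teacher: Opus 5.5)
Your proof is correct and is essentially the paper's argument. For sufficiency the paper uses the same witness $\mathbf{x}=\rho\mathbf{y}/\|\mathbf{y}\|_2$ with $\rho=1/(\|\mathbf{y}\|_2-\varepsilon)$. For necessity it uses the same Cauchy--Schwarz bound, phrased as a contrapositive: assuming $\|\mathbf{y}\|_2\leq\varepsilon$, it derives $\mathbf{x}^T\mathbf{y}\leq\varepsilon\|\mathbf{x}\|_2<\varepsilon\|\mathbf{x}\|_2+1$, whereas you read the same chain of inequalities directly.
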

\begin{IEEEproof}
See Appendix \ref{sec:Proof-of-Lemma exist eqv} for the detailed
proof. 
\end{IEEEproof}
Applying Lemma \ref{lem:exist eqv}, we can reduce the primal feasibility
condition \eqref{eq:KKT 1 ineq} to be $\sum_{n\in\mathcal{I}_{0}}c_{n}^{2}>\varepsilon^{2}$.
As can be seen from \eqref{eq:u expr mu zero}, the optimal $\mathbf{u}$
is non-unique. Another potential KKT solution lies in $\mu>0$. As
$\mu>0$ leads to $k>0$, the expressions of $u_{n}$, $k$, and $\mu$
follow \eqref{eq:u expr}, \eqref{eq:k expr}, and \eqref{eq:mu expr},
respectively. However, a minor modification should be made to $f\left(k\right)$:
\begin{equation}
\begin{aligned}f\left(k\right)= & \sum_{n\in\mathcal{I}_{0}}\left(\frac{c_{n}k}{2\lambda_{n}+k}\right)^{2}+\sum_{n\notin\mathcal{I}_{0}}\left(\frac{c_{n}k}{2\lambda_{n}+k}\right)^{2}\\
= & \sum_{n\in\mathcal{I}_{0}}c_{n}^{2}+\sum_{n\notin\mathcal{I}_{0}}\left(\frac{c_{n}k}{2\lambda_{n}+k}\right)^{2}
\end{aligned}
\label{eq: f(k) def}
\end{equation}
Note that $\lim_{k\rightarrow0}f\left(k\right)=\sum_{n\in\mathcal{I}_{0}}c_{n}^{2}$
and $\lim_{k\rightarrow+\infty}f\left(k\right)=\sum_{n=1}^{N}c_{n}^{2}$.
A unique solution for $k$ exists under $\sum_{n\in\mathcal{I}_{0}}c_{n}^{2}<\varepsilon^{2}<\sum_{n=1}^{N}c_{n}^{2}$.
The requirement $\mathbf{u}\geq0$ is also satisfied due to the expression
of $u_{n}$ in \eqref{eq:u expr}. 

To wrap up, when $\sum_{n\in\mathcal{I}_{0}}c_{n}^{2}>\varepsilon^{2}$,
the first KKT solution is valid and the optimal solution is given
by \eqref{eq:u expr mu zero}. When $\sum_{n\in\mathcal{I}_{0}}c_{n}^{2}<\varepsilon^{2}<\sum_{n=1}^{N}c_{n}^{2}$,
the second KKT solution is valid and the optimal solution takes the
same form as the full-rank case: \eqref{eq:u expr}. When $\sum_{n\in\mathcal{I}_{0}}c_{n}^{2}=\varepsilon^{2}$,
neither aforementioned KKT solution is valid and a finite optimal
solution does not exist. 

\subsubsection{Non-Full Column Rank Case}

When the column rank of $\mathbf{A}$ is deficient, $\lambda_{n}>0$
for all $n$ and the KKT system is closely analogous. As in the full-rank
coefficient matrix case of the previous subsection, the optimal $\mu$
must be positive. Let $k=\left(\mu\varepsilon\right)/\sqrt{\sum_{n=1}^{r}u_{n}^{2}}$
and the expressions for $u_{n}$ is given by 
\begin{equation}
u_{n}=\begin{cases}
\frac{\mu c_{n}}{2\lambda_{n}+k} & n=1,\ldots,r\\
\frac{\mu c_{n}}{2\lambda_{n}} & n=r+1,\ldots,N.
\end{cases}
\end{equation}
The auxiliary variable should satisfy 
\begin{equation}
f\left(k\right)=\sum_{n=1}^{r}\left(\frac{c_{n}k}{2\lambda_{n}+k}\right)^{2}=\varepsilon^{2}.\label{eq:k expr nonfull}
\end{equation}
If $\varepsilon^{2}<\sum_{n=1}^{r}c_{n}^{2}$ holds, a unique $k$
can be solved with bisection. Eventually, the Lagrange multiplier
$\mu$ is calculated as {\footnotesize{
\begin{equation}
\begin{aligned} & \sum_{n=1}^{N}c_{n}u_{n}=\varepsilon\sqrt{\sum_{n=1}^{r}u_{n}^{2}}+1\\
\Longrightarrow & \sum_{n=1}^{r}\frac{\mu c_{n}^{2}}{2\lambda_{n}+k}+\sum_{n=r+1}^{N}\frac{\mu c_{n}^{2}}{2\lambda_{n}}\overset{(a)}{=}\frac{\mu\varepsilon^{2}}{k}+1\\
\Longrightarrow & \left[\sum_{n=1}^{r}\frac{c_{n}^{2}}{2\lambda_{n}+k}-\frac{\varepsilon^{2}}{k}+\sum_{n=r+1}^{N}\frac{c_{n}^{2}}{2\lambda_{n}}\right]^{-1}=\mu\\
\overset{(b)}{\Longrightarrow} & \left[\sum_{n=1}^{r}\frac{2\lambda_{n}c_{n}^{2}}{\left(2\lambda_{n}+k\right)^{2}}+\sum_{n=r+1}^{N}\frac{c_{n}^{2}}{2\lambda_{n}}\right]^{-1}=\mu,
\end{aligned}
\label{eq:mu expr nonfull}
\end{equation}
}}where (a) is because $\sqrt{\sum_{n=1}^{r}u_{n}^{2}}=\left(\mu\varepsilon\right)/k$
and (b) results from $\frac{\varepsilon^{2}}{k}=\sum_{n=1}^{r}\frac{c_{n}^{2}k}{\left(2\lambda_{n}+k\right)^{2}}$
(cf. \eqref{eq:k expr nonfull}). Upon determination of $k$ and $\mu$,
$\mathbf{u}\geq\mathbf{0}$ holds trivially. 

\subsection{Solution Summarization\label{subsec:Solution Summarization}}

The overall solution to problem \eqref{eq:opt prob} in the two investigated
scenarios is summarized as follows: 

Under the condition that $\mathbf{A}$ has full column rank and $\text{\ensuremath{\mathbf{R}}}\succeq\mathbf{0}$
with $\mathbf{R}\neq\mathbf{0}$, 
\begin{equation}
\mathbf{w}^{\star}=\mathbf{B}^{-1}\mathbf{U}\left[\mathbf{u}^{\star}\odot\exp\left(j\arg\left(\mathbf{b}\right)\right)\right]
\end{equation}
where $\mathbf{B}$ is a square matrix such that $\mathbf{B}^{H}\mathbf{B}=\mathbf{A}^{H}\mathbf{A}$,
the EVD of $\left(\mathbf{B}^{-1}\right)^{H}\mathbf{R}\mathbf{B}^{-1}$
is $\mathbf{U}\boldsymbol{\Lambda}\mathbf{U}^{H}$, and $\boldsymbol{\Lambda}=\mathrm{Diag}\left(\boldsymbol{\lambda}\right)$
with components of $\boldsymbol{\lambda}$ in decreasing order. Define
$\mathcal{I}_{0}=\left\{ n_{0}|\lambda_{n_{0}}=0\right\} $, $\mathbf{b}=\mathbf{U}^{H}\left(\mathbf{B}^{-1}\right)^{H}\mathbf{a}$,
and $\mathbf{c}=\left|\mathbf{b}\right|$. Specially, if $\mathcal{I}_{0}=\emptyset$,
$\sum_{n\in\mathcal{I}_{0}}c_{n}^{2}=0$.
\begin{itemize}
\item When $\sum_{n\in\mathcal{I}_{0}}c_{n}^{2}<\varepsilon^{2}<\sum_{n=1}^{N}c_{n}^{2}$,
\begin{equation}
u_{n}^{\star}=\frac{\mu c_{n}}{2\lambda_{n}+k},\forall n
\end{equation}
with $\mu=\left[\sum_{n=1}^{N}\frac{2\lambda_{n}c_{n}^{2}}{\left(2\lambda_{n}+k\right)^{2}}\right]^{-1}$
and $k$ uniquely solved from $f\left(k\right)=\sum_{n\in\mathcal{I}_{0}}c_{n}^{2}+\sum_{n\notin\mathcal{I}_{0}}\left(\frac{c_{n}k}{2\lambda_{n}+k}\right)^{2}=\varepsilon^{2}$
via bisection. 
\item When $\sum_{n\in\mathcal{I}_{0}}c_{n}^{2}>\varepsilon^{2}$, 
\begin{equation}
u_{n}^{\star}=\begin{cases}
0 & n\notin\mathcal{I}_{0}\\
\left|\mathrm{arbitrary}\right| & n\in\mathcal{I}_{0}
\end{cases}
\end{equation}
subject to 
\begin{equation}
\sum_{n\in\mathcal{I}_{0}}c_{n}u_{n}^{\star}\geq\varepsilon\sqrt{\sum_{n\in\mathcal{I}_{0}}\left(u_{n}^{\star}\right)^{2}}+1.
\end{equation}
$\mathbf{u}^{\star}$ is non-unique in this case and one optimal solution
at $n\in\mathcal{I}_{0}$ can be $u_{n}^{\star}=\frac{c_{n}}{\sum_{n\in\mathcal{I}_{0}}c_{n}^{2}-\varepsilon\sqrt{\sum_{n\in\mathcal{I}_{0}}c_{n}^{2}}}$
(cf. proof of Lemma \ref{lem:exist eqv}). For any optimal $\mathbf{u}^{\star}$
and $t>1$, $tu_{n}^{\star}$ is also optimal. 
\end{itemize}
\hspace{0.3cm}Under the condition that $\mathbf{A}$ does not have
full column rank and $\text{\ensuremath{\mathbf{R}}}\succ\mathbf{0}$,
\begin{equation}
\mathbf{w}^{\star}=\tilde{\mathbf{B}}\cdot\left[\begin{array}{cc}
\mathbf{U}_{1} & \mathbf{0}\\
-\tilde{\mathbf{R}}_{22}^{-1}\tilde{\mathbf{R}}_{21}\mathbf{U}_{1} & \mathbf{U}_{2}
\end{array}\right]\cdot\left(\mathbf{u}^{\star}\odot\exp\left(j\arg\left(\mathbf{b}\right)\right)\right).
\end{equation}
where $\tilde{\mathbf{B}}=\left[\mathbf{V}_{1}\left[\mathrm{Diag}\left(\mathbf{d}\right)\right]^{-1},\mathbf{V}_{2}\right]$,
$\mathbf{d}$ and $\mathbf{V}_{1}$ are the singular values and right
singular vectors obtained from the compact SVD of $\mathbf{A}$, $\mathbf{V}_{2}$
is orthogonal complement of $\mathbf{V}_{1}$, $\tilde{\mathbf{R}}=\tilde{\mathbf{B}}^{H}\mathbf{R}\tilde{\mathbf{B}}$,
$\tilde{\mathbf{R}}$ is partitioned into four blocks with respect
to the first $r$ and the remaining $N-r$ indices: $\left[\begin{array}{cc}
\tilde{\mathbf{R}}_{11} & \tilde{\mathbf{R}}_{12}\\
\tilde{\mathbf{R}}_{21} & \tilde{\mathbf{R}}_{22}
\end{array}\right]$, the EVD of $\tilde{\mathbf{S}}_{11}=\tilde{\mathbf{R}}_{11}-\tilde{\mathbf{R}}_{12}\tilde{\mathbf{R}}_{22}^{-1}\tilde{\mathbf{R}}_{21}$
and $\tilde{\mathbf{R}}_{22}$ are $\mathbf{U}_{1}\mathbf{\boldsymbol{\Lambda}}_{1}\mathbf{U}_{1}^{H}$
and $\mathbf{U}_{2}\mathbf{\boldsymbol{\Lambda}}_{2}\mathbf{U}_{2}^{H}$,
$\boldsymbol{\Lambda}_{1}=\mathrm{Diag}\left(\boldsymbol{\lambda}_{1}\right)$,
$\boldsymbol{\Lambda}_{2}=\mathrm{Diag}\left(\boldsymbol{\lambda}_{2}\right)$,
and the components of $\boldsymbol{\lambda}_{1}$ and $\boldsymbol{\lambda}_{2}$
are in decreasing order. $\mathrm{BlkDiag}\left(\left\{ \mathbf{\boldsymbol{\Lambda}}_{1},\mathbf{\boldsymbol{\Lambda}}_{2}\right\} \right)$
is compactly written as $\mathrm{Diag}\left(\boldsymbol{\lambda}\right)$.
Define $\mathbf{b}=\left[\mathbf{b}_{1}^{T},\mathbf{b}_{2}^{T}\right]^{T}$
with $\mathbf{b}_{1}=\mathbf{U}_{1}^{H}\left(\tilde{\mathbf{a}}_{1}-\tilde{\mathbf{R}}_{21}^{H}\tilde{\mathbf{R}}_{22}^{-1}\tilde{\mathbf{a}}_{2}\right)$,
$\mathbf{b}_{2}=\mathbf{U}_{2}^{H}\tilde{\mathbf{a}}_{2}$, $\tilde{\mathbf{a}}_{1}=\left[\mathrm{Diag}\left(\mathbf{d}\right)\right]^{-1}\mathbf{V}_{1}^{H}\mathbf{a}$,
$\tilde{\mathbf{a}}_{2}=\mathbf{V}_{2}^{H}\mathbf{a}$, and $\mathbf{c}=\left|\mathbf{b}\right|$.
When $\varepsilon^{2}<\sum_{n=1}^{r}c_{n}^{2}$, 
\begin{equation}
u_{n}^{\star}=\begin{cases}
\frac{\mu c_{n}}{2\lambda_{n}+k} & n=1,\ldots,r\\
\frac{\mu c_{n}}{2\lambda_{n}} & n=r+1,\ldots,N
\end{cases}
\end{equation}
with $\mu=\left[\sum_{n=1}^{r}\frac{2\lambda_{n}c_{n}^{2}}{\left(2\lambda_{n}+k\right)^{2}}+\sum_{n=r+1}^{N}\frac{c_{n}^{2}}{2\lambda_{n}}\right]^{-1}$
and $k$ uniquely solved from $f\left(k\right)=\sum_{n=1}^{r}\left(\frac{c_{n}k}{2\lambda_{n}+k}\right)^{2}=\varepsilon^{2}$
via bisection. 

This solution scheme consists of three stages: Diagonalization Transform,
Phase Alignment, and KKT Solution, and is named as DTPAK. 
\begin{rem}
The overall computational complexity highly resembles that of RMVB
and is dominated by the EVD operation, which is $\mathcal{O}\left(N^{3}\right)$. 
\end{rem}

\section{Existence and Uniqueness Analysis \label{sec:Existence-and-Uniqueness}}

\subsection{Existence Study}

From the derivation above, we can gain insight into the conditions
for solution existence, which are closely related to the computation
of $k$. In the investigated scenarios, the equations for $k$ are
presented as 
\begin{itemize}
\item When $\mathbf{A}$ has full column rank and $\mathbf{R}\succ\mathbf{0}$,
$\varepsilon^{2}=\sum_{n=1}^{N}\left(\frac{c_{n}k}{2\lambda_{n}+k}\right)^{2}$
;
\item When $\mathbf{A}$ has full column rank and the covariance $\mathbf{R}$
is rank-deficient, $\varepsilon^{2}=\sum_{n\in\mathcal{I}_{0}}c_{n}^{2}+\sum_{n\notin\mathcal{I}_{0}}\left(\frac{c_{n}k}{2\lambda_{n}+k}\right)^{2}$;
\item When $\mathbf{A}$ does not have full column rank and $\mathbf{R}\succ\mathbf{0}$,
$\varepsilon^{2}=\sum_{n=1}^{r}\left(\frac{c_{n}k}{2\lambda_{n}+k}\right)^{2}$.
\end{itemize}
Therefore, their respective existence conditions are 

\vspace{0.2cm}

\noindent\fbox{\begin{minipage}[t]{1\columnwidth - 2\fboxsep - 2\fboxrule}%
\begin{itemize}
\item When $\mathbf{A}$ has full column rank and $\mathbf{R}\succ\mathbf{0}$,
the existence condition is $\varepsilon^{2}<\sum_{n=1}^{N}c_{n}^{2}$;
\vspace{0.2cm}
\item When $\mathbf{A}$ has full column rank and $\mathbf{R}$ is rank-deficient,
the existence condition is $\varepsilon^{2}<\sum_{n=1}^{N}c_{n}^{2}$
and $\varepsilon^{2}\neq\sum_{n\in\mathcal{I}_{0}}c_{n}^{2}$; \vspace{0.2cm}
\item When $\mathbf{A}$ does not have full column rank and $\mathbf{R}\succ\mathbf{0}$,
the existence condition is $\varepsilon^{2}<\sum_{n=1}^{r}c_{n}^{2}$,
where $r$ is the column rank of $\mathbf{A}$. 
\end{itemize}
\end{minipage}}\vspace{0.2cm}

If $\varepsilon^{2}$ touches or exceeds the upper limit, the feasibility
set becomes empty. Nevertheless, a nonempty feasible set is not sufficient
to ensure the existence of a solution. A finite solution is not achievable
in the example below where $\mathbf{A}$ is invertible, $\mathbf{R}$
is rank-deficient, and $\varepsilon^{2}=\sum_{n\in\mathcal{I}_{0}}c_{n}^{2}$. 
\begin{example}
\label{exa: def r nonexist - 1}Let $\mathbf{R}=\mathrm{Diag}\left(\left[1,0\right]^{T}\right)$,
$\mathbf{A}=\mathbf{I}$, $\varepsilon=2$, and $\mathbf{a}=\left[1,2\right]^{T}=\mathbf{c}$.
Problem \eqref{eq:opt prob} becomes 
\begin{equation}
\begin{aligned} & \underset{\mathbf{w}=\left[w_{1},w_{2}\right]^{T}\in\mathbb{R}^{2}}{\mathsf{minimize}} &  & w_{1}^{2}\\
 & \mathsf{subject\ to} &  & w_{1}+2w_{2}\geq2\sqrt{w_{1}^{2}+w_{2}^{2}}+1\\
 &  &  & \mathrm{Im}\left[w_{1}+2w_{2}\right]=0.
\end{aligned}
\end{equation}
This example satisfies $\varepsilon^{2}=\sum_{n\in\mathcal{I}_{0}}c_{n}^{2}$
with $\mathcal{I}_{0}=\left\{ 2\right\} $ and has a feasible point
$\left[2,4\right]^{T}$. To minimize the objective, we first calculate
a lower bound for $w_{1}$:

\begin{equation}
\begin{aligned}w_{1}\geq & 2\sqrt{w_{1}^{2}+w_{2}^{2}}-2w_{2}+1\\
= & 2\left(\sqrt{w_{1}^{2}+w_{2}^{2}}-w_{2}\right)+1\\
\geq & 2\left(\sqrt{w_{1}^{2}+w_{2}^{2}}-\left|w_{2}\right|\right)+1\\
= & 2\frac{w_{1}^{2}}{\sqrt{w_{1}^{2}+w_{2}^{2}}+\left|w_{2}\right|}+1\geq1,
\end{aligned}
\end{equation}
which indicates that $w_{1}$ can be expressed as $1+\delta$ with
$\delta\geq0$. Next, we show that the increment $\delta$ can be
any positive value but never achieve zero. We replace $w_{1}$ with
$1+\delta$ and solve the inequality again: 
\begin{equation}
\begin{aligned} & 1+\delta+2w_{2}\geq2\sqrt{\left(1+\delta\right)^{2}+w_{2}^{2}}+1\\
\Longleftrightarrow & \begin{cases}
\left(\delta+2w_{2}\right)^{2}\geq4\left(\left(1+\delta\right)^{2}+w_{2}^{2}\right)\\
\delta+2w_{2}\geq0
\end{cases}\\
\Longleftrightarrow & \begin{cases}
4\delta w_{2}\geq3\delta^{2}+8\delta+4\\
2w_{2}\geq-\delta
\end{cases}
\end{aligned}
\end{equation}
When $\delta=0$, the inequalities are never satisfied under a finite
$w_{2}$. For any $\delta>0$, there always exists a sufficiently
large $w_{2}$ that satisfies both inequalities. Hence, the optimal
value $w_{1}^{2}=1$ cannot be strictly achieved and that the optimal
solution $w_{1}=1$ can never be reached, proving nonexistence of
a finite solution. 
\end{example}

\subsection{Uniqueness Discussion}

Building on the existence results, we further discuss the uniqueness
conditions. In the convex problem \eqref{eq:opt prob}, the optimal
solution, if it exists, is unique as long as $\mathbf{R}\succ\mathbf{0}$,
i.e., the objective is strictly convex. However, when $\mathbf{R}$
is rank-deficient, the optimal solution can still be unique. We will
show how this can happen. 

In the investigated scenarios, the matrix $\mathbf{A}$ should have
full column rank if $\mathbf{R}$ is rank-deficient. Solution uniqueness
of the unified formulation depends on that of problem \eqref{eq:opt prob 3 relax},
so the condition $\sum_{n\in\mathcal{I}_{0}}c_{n}^{2}<\varepsilon^{2}<\sum_{n=1}^{N}c_{n}^{2}$
must be met. Moreover, the ambiguity in matrix factorization and EVD
may influence solution uniqueness because of unitary invariance and
repeated eigenvalues. In the following lemma, we will prove that decomposition
ambiguity does not affect the uniqueness of the optimal solution. 
\begin{lem}
\label{lem: unique}For any invertible $\mathbf{B}$ such that $\mathbf{B}^{H}\mathbf{B}=\mathbf{A}^{H}\mathbf{A}$
and any unitary $\mathbf{U}$ such that $\mathbf{U}\cdot\mathrm{Diag}\left(\boldsymbol{\lambda}\right)\cdot\mathbf{U}^{H}$
is the EVD of $\left(\mathbf{B}^{-1}\right)^{H}\mathbf{R}\mathbf{B}^{-1}$
with components of $\boldsymbol{\lambda}$ in decreasing order, the
solution $\mathbf{w}^{\star}$ to problem \eqref{eq:opt prob} remains
unique if problem \eqref{eq:opt prob 3 relax} admits a unique solution
$\mathbf{u}^{\star}$. 
\end{lem}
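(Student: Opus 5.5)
The plan is to show that, whenever problem \eqref{eq:opt prob 3 relax} has a unique solution, the recovered beamformer $\mathbf{w}^{\star}=\mathbf{B}^{-1}\mathbf{U}\left[\mathbf{u}^{\star}\odot\exp\left(j\arg\left(\mathbf{b}\right)\right)\right]$ can be written intrinsically in terms of $\mathbf{R}$, $\mathbf{A}^{H}\mathbf{A}$, $\mathbf{a}$ and two scalars. I will then check that these scalars do not depend on the chosen $\mathbf{B}$ or $\mathbf{U}$. By the discussion before the lemma, uniqueness of $\mathbf{u}^{\star}$ puts us in the regime $\sum_{n\in\mathcal{I}_{0}}c_{n}^{2}<\varepsilon^{2}<\sum_{n=1}^{N}c_{n}^{2}$. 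There $u_{n}^{\star}=\mu c_{n}/(2\lambda_{n}+k)$ with $k>0$ and $\mu>0$.

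\textbf{Step 1 (absorbing the phases).} Since $c_{n}=|b_{n}|$ and $\exp(j\arg b_{n})$ is the phase of $b_{n}$, we have $u_{n}^{\star}\exp(j\arg b_{n})=\mu b_{n}/(2\lambda_{n}+k)$. This also holds when $b_{n}=0$, because then both sides vanish under the zero-phase convention. Hence
\[
\mathbf{u}^{\star}\odot\exp\left(j\arg\left(\mathbf{b}\right)\right)=\mu\left(2\boldsymbol{\Lambda}+k\mathbf{I}\right)^{-1}\mathbf{U}^{H}\tilde{\mathbf{a}}.
\]
Using $\tilde{\mathbf{R}}=\mathbf{U}\boldsymbol{\Lambda}\mathbf{U}^{H}$, $\tilde{\mathbf{R}}=(\mathbf{B}^{-1})^{H}\mathbf{R}\mathbf{B}^{-1}$ and $\tilde{\mathbf{a}}=(\mathbf{B}^{-1})^{H}\mathbf{a}$, this gives
\[
\mathbf{w}^{\star}=\mu\mathbf{B}^{-1}\left(2\tilde{\mathbf{R}}+k\mathbf{I}\right)^{-1}(\mathbf{B}^{-1})^{H}\mathbf{a}=\mu\left(2\mathbf{R}+k\mathbf{A}^{H}\mathbf{A}\right)^{-1}\mathbf{a}.
\]
The inverse exists because $k>0$ and $\mathbf{A}^{H}\mathbf{A}\succ\mathbf{0}$. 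In this formula $\mathbf{U}$ has disappeared entirely, and $\mathbf{B}$ enters only through $\mathbf{B}^{H}\mathbf{B}=\mathbf{A}^{H}\mathbf{A}$.

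\textbf{Step 2 (invariance of $k$ and $\mu$).} Both scalars are built from $\boldsymbol{\lambda}$ and $\mathbf{c}$, so I rewrite them as quadratic forms:
\[
f(k)=k^{2}\,\tilde{\mathbf{a}}^{H}\left(2\tilde{\mathbf{R}}+k\mathbf{I}\right)^{-2}\tilde{\mathbf{a}},\qquad
\mu^{-1}=2\,\tilde{\mathbf{a}}^{H}\left(2\tilde{\mathbf{R}}+k\mathbf{I}\right)^{-1}\tilde{\mathbf{R}}\left(2\tilde{\mathbf{R}}+k\mathbf{I}\right)^{-1}\tilde{\mathbf{a}}.
\]
These are valid for every $k>0$, including the terms with $n\in\mathcal{I}_{0}$. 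Neither form depends on $\mathbf{U}$.

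For $\mathbf{B}$, take any two invertible factors $\mathbf{B},\mathbf{B}'$ with $\mathbf{B}'^{H}\mathbf{B}'=\mathbf{B}^{H}\mathbf{B}$. Then $\mathbf{Q}=\mathbf{B}'\mathbf{B}^{-1}$ is unitary. It follows that $\tilde{\mathbf{R}}'=\mathbf{Q}\tilde{\mathbf{R}}\mathbf{Q}^{H}$ and $\tilde{\mathbf{a}}'=\mathbf{Q}\tilde{\mathbf{a}}$, so both quadratic forms, and $\boldsymbol{\lambda}$ itself, are unchanged. Consequently the function $f$ is the same for every choice. Its root $k$ is unique by the strict monotonicity used in Section III: in this regime some $c_{n}>0$ with $n\notin\mathcal{I}_{0}$, which is needed for $\varepsilon^{2}$ to exceed $\sum_{n\in\mathcal{I}_{0}}c_{n}^{2}$. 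Hence $k$ is the same, $\mu$ is then the same, and by Step 1 $\mathbf{w}^{\star}$ is the same.

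\textbf{Step 3 (conclusion).} For each fixed decomposition, the maps $\mathbf{w}\mapsto\tilde{\mathbf{w}}\mapsto\mathbf{v}$ are bijections, and the phase of $\mathbf{v}^{\star}$ is forced by Lemma \ref{lem:phase align}. So for that decomposition the optimal $\mathbf{w}$ is exactly the one recovered from the unique $\mathbf{u}^{\star}$, and Step 2 shows this recovered vector is identical across all admissible $(\mathbf{B},\mathbf{U})$. I expect the main obstacle to be Step 1, namely handling the phase convention for zero $b_{n}$ and the repeated or zero eigenvalues cleanly. Passing to the resolvent form $(2\tilde{\mathbf{R}}+k\mathbf{I})^{-1}$ is what makes the eigenvector ambiguity vanish.
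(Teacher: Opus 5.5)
Your proof is correct, and it takes a cleaner route than the paper's. Both arguments start from the same identity, $\mathbf{u}^{\star}\odot\exp(j\arg(\mathbf{b}))=\mu\,\mathbf{b}/(2\boldsymbol{\lambda}+k\mathbf{1})$, including the zero-phase check. They differ in how they remove the dependence on the factorization.

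\textbf{The paper's route.} It fixes a reference pair $(\mathbf{B}_0,\mathbf{U}_0)$ and writes every admissible pair as $\mathbf{B}=\mathbf{V}\mathbf{B}_0$, $\mathbf{U}=\mathbf{V}\mathbf{U}_0\mathbf{E}$. Here $\mathbf{E}$ is block-diagonal unitary over the eigenspaces of repeated eigenvalues. The paper then checks that $\mathbf{V}$ cancels in $\mathbf{B}^{-1}\mathbf{U}$, and that $\mathbf{E}$ commutes with $\mathrm{BlkDiag}(\{\frac{\mu}{2\lambda_n+k}\mathbf{I}_n\})$.

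\textbf{Your route.} You never characterize the non-uniqueness of the EVD. Instead you use $\mathbf{U}(2\boldsymbol{\Lambda}+k\mathbf{I})^{-1}\mathbf{U}^{H}=(2\tilde{\mathbf{R}}+k\mathbf{I})^{-1}$, which holds for any unitary diagonalizer. Absorbing $\mathbf{B}$ then gives the decomposition-free formula $\mathbf{w}^{\star}=\mu(2\mathbf{R}+k\mathbf{A}^{H}\mathbf{A})^{-1}\mathbf{a}$. The paper's final expression equals this, but the paper never states it.

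\textbf{What your Step 2 adds.} It settles a point the paper passes over. Since $\mathbf{c}=|\mathbf{b}|$ changes with $\mathbf{E}$, one must check that $k$ and $\mu$ do not. Your quadratic-form expressions do this; they amount to noting that only the eigenspace sums of $c_n^2$ enter. Your strict-monotonicity remark also justifies why the root $k$ is unique, which the paper asserts after establishing only that $f$ is nondecreasing.

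\textbf{Your Step 3.} This observation makes the lemma nearly immediate on its own. A unique $\mathbf{u}^{\star}$ for one decomposition yields, via the bijective changes of variable and Lemma~\ref{lem:phase align}, a unique minimizer of \eqref{eq:opt prob}. Every other decomposition must then recover that same vector, so Steps 1--2 serve as an explicit confirmation rather than a logical necessity.
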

\begin{IEEEproof}
See Appendix \ref{sec:Proof-of-Lemma unique} for the detailed proof. 
\end{IEEEproof}
Based on the preceding analysis, the uniqueness conditions are formally
stated as follows. 

\vspace{0.2cm}

\noindent\fbox{\begin{minipage}[t]{1\columnwidth - 2\fboxsep - 2\fboxrule}%
\begin{itemize}
\item When $\mathbf{A}$ has full column rank and $\mathbf{R}\succ\mathbf{0}$,
the uniqueness condition is $\varepsilon^{2}<\sum_{n=1}^{N}c_{n}^{2}$;
\vspace{0.2cm}
\item When $\mathbf{A}$ has full column rank and $\mathbf{R}$ is rank-deficient,
the uniqueness condition is $\sum_{n\in\mathcal{I}_{0}}c_{n}^{2}<\varepsilon^{2}<\sum_{n=1}^{N}c_{n}^{2}$;
\vspace{0.2cm}
\item When $\mathbf{A}$ does not have full column rank and $\mathbf{R}\succ\mathbf{0}$,
the uniqueness condition is $\varepsilon^{2}<\sum_{n=1}^{r}c_{n}^{2}$.
\end{itemize}
\end{minipage}}\vspace{0.2cm}

We will provide an example to illustrate a unique optimal solution
even if the objective is not strictly convex. 
\begin{example}
\label{exa: def r unique}Let $\mathbf{R}=\mathrm{Diag}\left(\left[1,0\right]^{T}\right)$,
$\mathbf{A}=\mathbf{I}$, $\varepsilon=3/\sqrt{2}$, and $\mathbf{a}=\left[1,2\right]^{T}=\mathbf{c}$.
Problem \eqref{eq:opt prob} becomes 
\begin{equation}
\begin{aligned} & \underset{\mathbf{w}=\left[w_{1},w_{2}\right]^{T}\in\mathbb{R}^{2}}{\mathsf{minimize}} &  & w_{1}^{2}\\
 & \mathsf{subject\ to} &  & w_{1}+2w_{2}\geq\frac{3}{\sqrt{2}}\sqrt{w_{1}^{2}+w_{2}^{2}}+1\\
 &  &  & \mathrm{Im}\left[w_{1}+2w_{2}\right]=0.
\end{aligned}
\end{equation}
This example satisfies $\sum_{n\in\mathcal{I}_{0}}c_{n}^{2}<\varepsilon^{2}<\sum_{n=1}^{N}c_{n}^{2}$.
This problem is feasible with an interior point $\left[2\sqrt{6},6\sqrt{2}\right]^{T}$
and the optimal solution is $\mathbf{w}^{\star}=\left[3.4142,9.6569\right]^{T}$
according to Sec. \ref{subsec:Solution Summarization}. 
\end{example}

\section{Numerical Simulations \label{sec:Numerical-Simulations}}

In this section, we present numerical results on the solution to the
RAB problem. All simulations are performed on a PC with a 2.90 GHz
i7-10700 CPU and 16.0 GB RAM. We will carry out a detailed comparison
of the proposed scheme DTPAK and a few existing methods. In terms
of beamforming, we consider the original ICMR method of \citep{gu2012robust}
and follow its implementation. In terms of solution algorithm, we
consider the interior-point method implemented with the off-the-shelf
solver MOSEK and the RMVB algorithm of \citep{lorenz2005robust}. 

\textbf{Default Simulation Settings.} We consider a ULA equipped with
$N=40$ omnidirectional antenna elements at half-wavelength spacing.
The additive background noise conforms to a zero-mean complex circularly
symmetric Gaussian distribution. The steering vector $\mathbf{a}\left(\theta\right)$
is expressed as $\mathbf{a}\left(\theta\right)=\left[1,\exp\left(-j\pi\sin\theta\right),\ldots,\exp\left(-j\pi\left(N-1\right)\sin\theta\right)\right]^{T}$.
The desired signal arrives from the direction $\theta_{s}=16^{\lyxmathsym{\textdegree}}$
and the estimated direction $\bar{\theta}_{s}$ is $15^{\lyxmathsym{\textdegree}}$.
The input SNR is $10$ dB. Two interference signals originate from
the DOAs $\theta_{i,1}=-42^{{^\circ}}$ and $\theta_{i,2}=-11^{{^\circ}}$,
and the estimated directions are $\bar{\theta}_{i,1}=-45^{{^\circ}}$
and $\bar{\theta}_{i,2}=-10^{{^\circ}}$. The interference-to-noise
ratio is $25$ dB. The number of snapshots $T$ is $5N$. In the reconstruction
of $\mathbf{R}_{i+n}$, the desired signal angular sector is $\left[\bar{\theta}_{s}-6^{\lyxmathsym{\textdegree}},\ensuremath{\bar{\theta}_{s}}+\ensuremath{6^{\lyxmathsym{\textdegree}}}\right]=\ensuremath{\left[9^{\lyxmathsym{\textdegree}},2\ensuremath{1^{\lyxmathsym{\textdegree}}}\right]}$
and its complement set is $\bar{\Theta}=\left[-90^{\lyxmathsym{\textdegree}},\ensuremath{9^{\lyxmathsym{\textdegree}}}\right)\ensuremath{\cup\left(21^{\lyxmathsym{\textdegree}},9\ensuremath{0^{\lyxmathsym{\textdegree}}}\right]}$.
We present the numerical results through three representative mismatch
examples. The reported performance records are all averaged from $200$
randomized instances. 

To implement the worst-case-constrained beamforming, the unified formulation
\eqref{eq:opt prob} should adopt the model-driven parameters: $\mathbf{R}$
is chosen as the sample covariance matrix $\hat{\mathbf{R}}$, $\mathbf{a}=\mathbf{a}\left(\bar{\theta}_{s}\right)$,
$\mathbf{A}=\mathbf{I}$, and $\varepsilon=\sqrt{0.98N}$. To obtain
the reconstruction-based beamformer as in ICMR, the unified formulation
should take the data-driven parameters: $\mathbf{R}=\mathbf{R}_{i+n}$,
$\mathbf{a}=\mathbf{a}\left(\bar{\theta}_{s}\right)-\mathbf{S}_{a}\bar{\mathbf{R}}^{-1}\bar{\mathbf{a}}$,
$\mathbf{A}=\bar{\mathbf{R}}^{-1/2}\mathbf{S}_{a}^{H}$, and $\varepsilon=\sqrt{\bar{\mathbf{a}}^{H}\bar{\mathbf{R}}^{-1}\bar{\mathbf{a}}}$
where $\bar{\mathbf{R}}=\mathbf{S}_{a}^{H}\hat{\mathbf{R}}_{i+n}\mathbf{S}_{a}$,
$\bar{\mathbf{a}}=\mathbf{S}_{a}^{H}\hat{\mathbf{R}}_{i+n}\mathbf{a}\left(\bar{\theta}_{s}\right)$,
and $\mathbf{S}_{a}$ is the orthogonal complement of $\mathbf{a}\left(\bar{\theta}_{s}\right)$.
When the number of snapshots is less than $N$, $0.02\mathbf{I}$
could be added to $\hat{\mathbf{R}}$ for numerical stability. 

\subsection{Signal Look Direction Mismatch}

In the first example, the true DOAs of the desired and interference
signals are assumed to follow a truncated Gaussian distribution with
zero mean, standard deviation $\sigma=2.5/3$, and truncation bounds
at $\pm3\sigma$. The true DOAs remain fixed within each snapshot
block but varies across different instances. 

Figures \ref{fig:SINR vs SNR-1} and \ref{fig:SINR vs sample-1} display
the output SINR versus the input SNR and the number of snapshots,
respectively. The optimal beamformer of ICMR can be fully recovered
by the unified formulation using the data-driven parameters, and the
output SINR is very close to the optimal bound. The worst-case-constrained
beamformer never outperforms the reconstruction-based counterpart,
especially at high input SNR, and the SINR loss is asymptotically
10 dB. In Figure \ref{fig:SINR vs sample-1}, we can also observe
a clear gap across the entire range of snapshot numbers. However,
this limitation can be readily fixed by switching to a new set of
data-driven parameters, without the need for an algorithm change.
Whichever parameters are used, different solution methods give nearly
identical beamforming performance, which serves as a sanity check
for optimality of the compared algorithms. 

\begin{figure}[tbh]
\begin{centering}
\includegraphics[width=0.8\columnwidth]{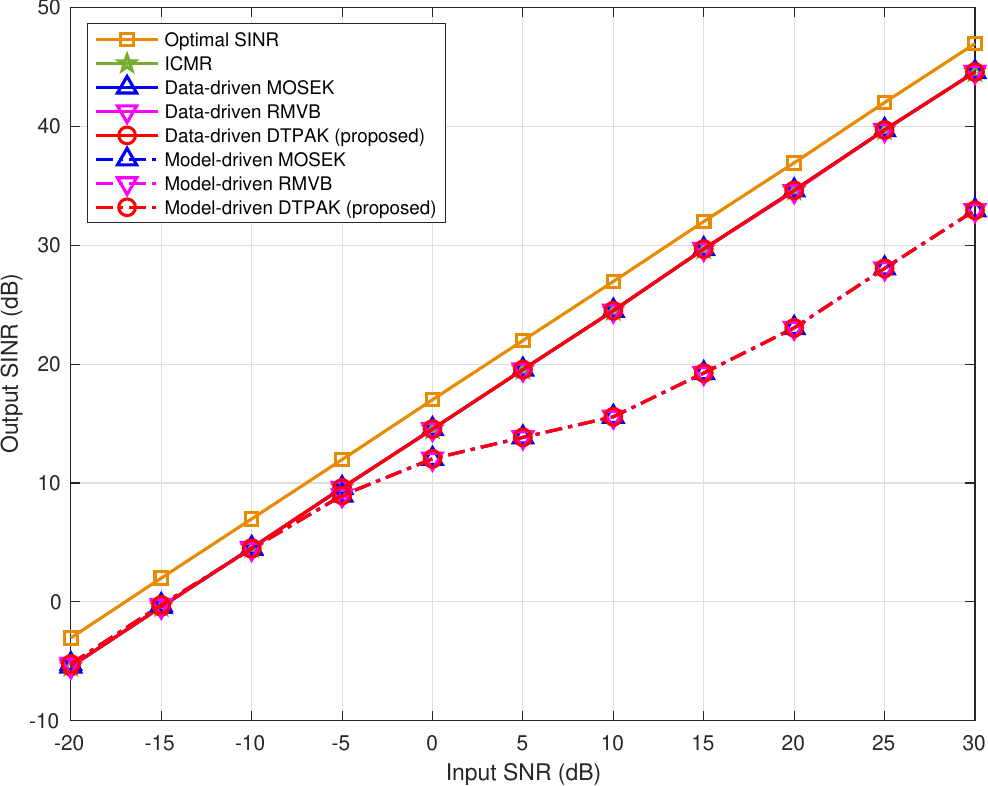}
\par\end{centering}
\caption{\label{fig:SINR vs SNR-1}Output SINR versus input SNR in the case
of signal look direction mismatch. ``Data-driven'' and ``model-driven''
refer to different parameter choices in the unified formulation.}
\end{figure}

\begin{figure}[tbh]
\begin{centering}
\includegraphics[width=0.8\columnwidth]{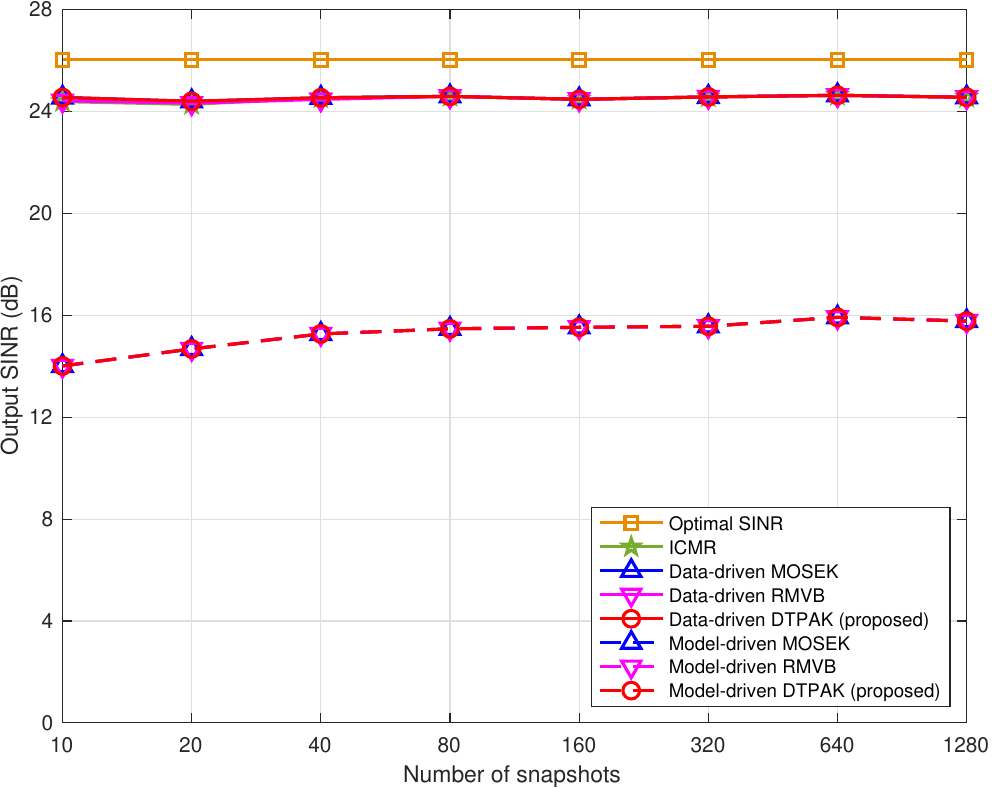}
\par\end{centering}
\caption{\label{fig:SINR vs sample-1}Output SINR versus snapshot number in
the case of signal look direction mismatch. $N=40$.}
\end{figure}

The algorithmic performance is assessed in terms of computational
time and the comparison is shown in Figure \ref{fig:time}. Under
the model-driven parameters, DTPAK is the most efficient, and the
running time is reduced by $75\%$ and $88\%$ compared to RMVB and
MOSEK. Under the data-driven parameters, DTPAK is as fast as RMVB
and faster than solver-based methods, i.e., ICMR and MOSEK. The efficiency
improvement is about $45\%$. The formulation with data-driven parameters
consumes more computational time for two reasons. Firstly, the computation
of the data-driven parameters involves matrix inversion and factorization,
which increases time complexity. Secondly, the matrix $\mathbf{A}$
in the data-driven parameters has fewer rows than columns, leading
to more complex operations in the Diagonalization Transform stage. 

\begin{figure}[tbh]
\begin{centering}
\includegraphics[width=0.8\columnwidth]{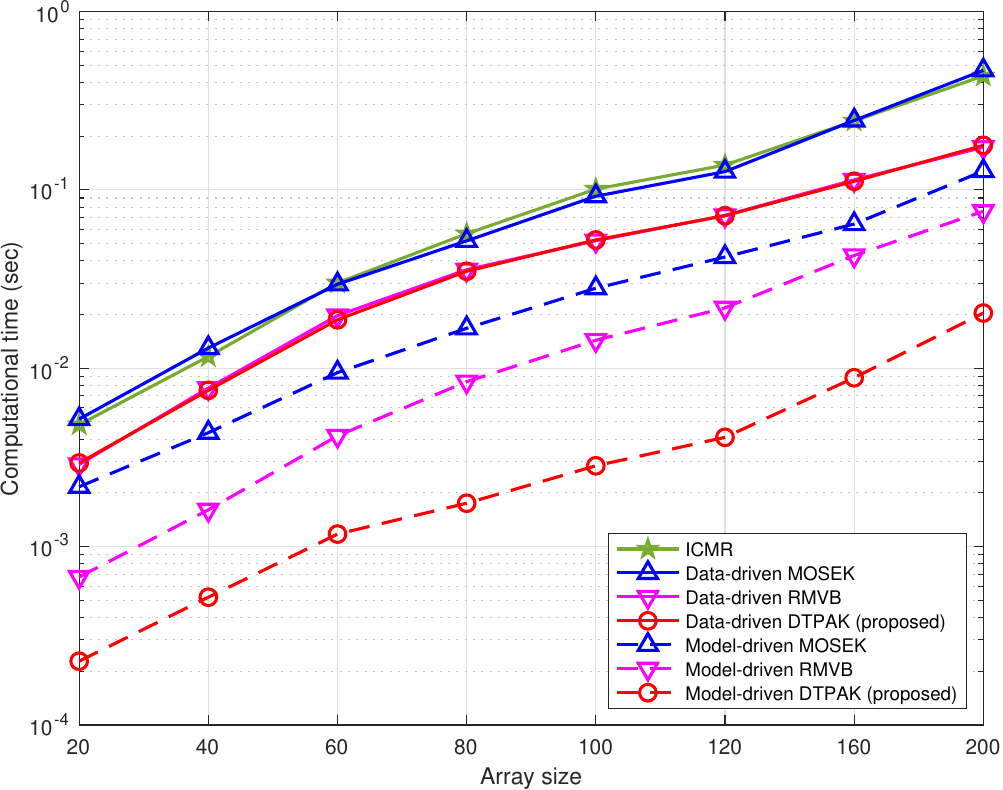}
\par\end{centering}
\caption{\label{fig:time}Computation time in the case of signal look direction
mismatch.}
\end{figure}

\subsection{Gain and Phase Perturbation Mismatch}

In the second example, the true steering vector is influenced by gain
and phase perturbations in each antenna element. The gain and phase
perturbations follow Gaussian distributions $\mathcal{N}\left(1,0.03^{2}\right)$
and $\mathcal{N}\left(0,\left(0.02\pi\right)^{2}\right)$, respectively.
The true steering vector remains fixed within each snapshot block
but varies across different instances. 

The comparison of output SINR is presented in Figures \ref{fig:SINR vs SNR-2}
and \ref{fig:SINR vs sample-2}. The ICMR beamformer can still be
perfectly reproduced regardless of gain and phase perturbations, but
we can see a non-negligible gap to the optimal SINR, around $7$ dB.
This is because the theoretical steering vector expression is inaccurate
for an uncalibrated array and the estimation error of $\hat{\mathbf{R}}_{i+n}$
is relatively large. Despite suffering from self-cancellation of the
desired signal at high input SNR, the worst-case-constrained beamforming
is superior in the low input SNR regime and thus remains of particular
interest in the presence of calibration errors. The computational
behaviors of the compared algorithms are very similar to those in
Figure \ref{fig:time} because the modeling variation in steering
vector mismatch does not change the algorithm architecture. 

\begin{figure}[tbh]
\begin{centering}
\includegraphics[width=0.8\columnwidth]{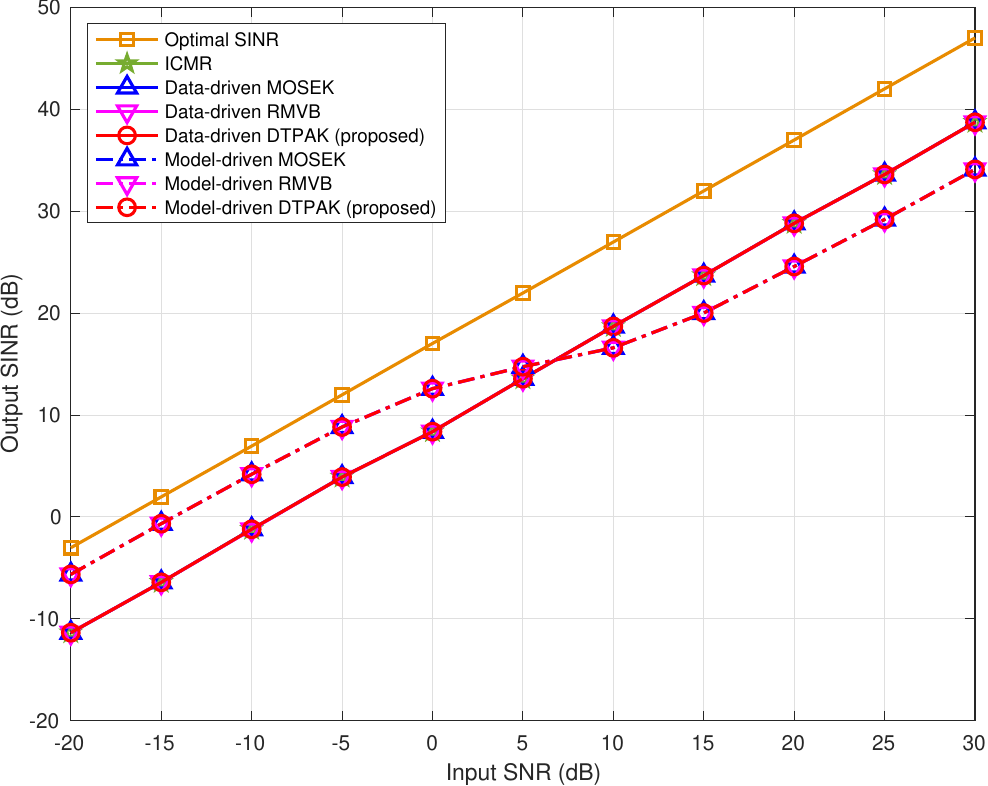}
\par\end{centering}
\caption{\label{fig:SINR vs SNR-2}Output SINR versus input SNR in the case
of gain and phase perturbation mismatch.}
\end{figure}

\begin{figure}[tbh]
\begin{centering}
\includegraphics[width=0.8\columnwidth]{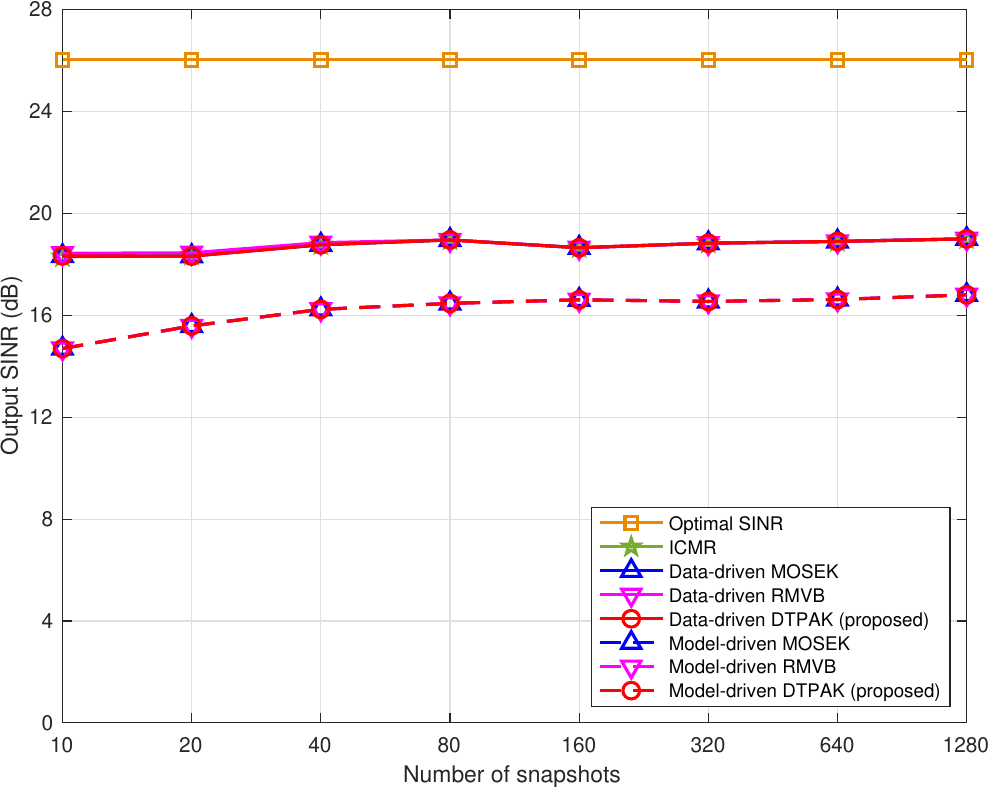}
\par\end{centering}
\caption{\label{fig:SINR vs sample-2}Output SINR versus snapshot number in
the case of gain and phase perturbation mismatch. $N=40$. }
\end{figure}

\subsection{Incoherent Local Scattering Mismatch}

The third example illustrates the impact of incoherent local scattering
on the steering vector mismatch. The desired signal has a time-varying
steering vector and is modeled as \citep{besson2000decoupled}
\begin{equation}
\mathbf{a}\left[t\right]=\varphi_{0}\left[t\right]\cdot\mathbf{a}\left(\theta_{s}\right)+\sum_{p=1}^{3}\varphi_{p}\left[t\right]\cdot\mathbf{a}\left(\theta_{p}\right)
\end{equation}
where $\varphi_{0}\left[t\right]$ and $\varphi_{p}\left[t\right]$,
$p=1,2,3$, are independently and identically distributed (i.i.d.)
standard complex Gaussian random variables. The DOAs $\theta_{p}$
are statistically independent and Gaussian distributed with mean $\theta_{s}$
and standard deviation $1^{\lyxmathsym{\textdegree}}$. The $\theta_{p}$
values stay fixed in each snapshot block but vary across instances.
The random variables $\varphi_{0}\left[t\right]$ and $\varphi_{p}\left[t\right]$
change within each snapshot block and also from instance to instance.
In this example, the signal covariance matrix $\mathbf{R}_{s}$ is
no longer rank-one and the output SINR is modified as \citep{gershman1999robust}
\begin{equation}
\mathrm{SINR}=\frac{\mathbf{w}^{H}\mathbf{R}_{s}\mathbf{w}}{\mathbf{w}^{H}\mathbf{R}_{i+n}\mathbf{w}}.\label{eq:ratio}
\end{equation}
The maximizer of the ratio \eqref{eq:ratio} is given by $\mathcal{P}\left\{ \mathbf{R}_{i+n}^{-1}\mathbf{R}_{s}\right\} $
where $\mathcal{P}\left\{ \cdot\right\} $ is the principal eigenvector
operator \citep{gershman1999robust}. 

We exhibit the comparison of output SINR in Figures \ref{fig:SINR vs SNR-3}
and \ref{fig:SINR vs sample-3}. Successful recovery of the ICMR beamformer
is still achieved and the optimality gap is small. The unified formulation
can produce both types of beamformer, simply by switching the problem
parameters. The reconstruction-based beamformer yields higher output
SINR than the worst-case-constrained beamformer throughout the range
of input SNR. The asymptotic improvement is 10 dB. The unified formulation
is convex, so different algorithms can all converge to a global optimum.
The running time comparison is largely consistent with the results
shown in Figure \ref{fig:time}. 

\begin{figure}[tbh]
\begin{centering}
\includegraphics[width=0.8\columnwidth]{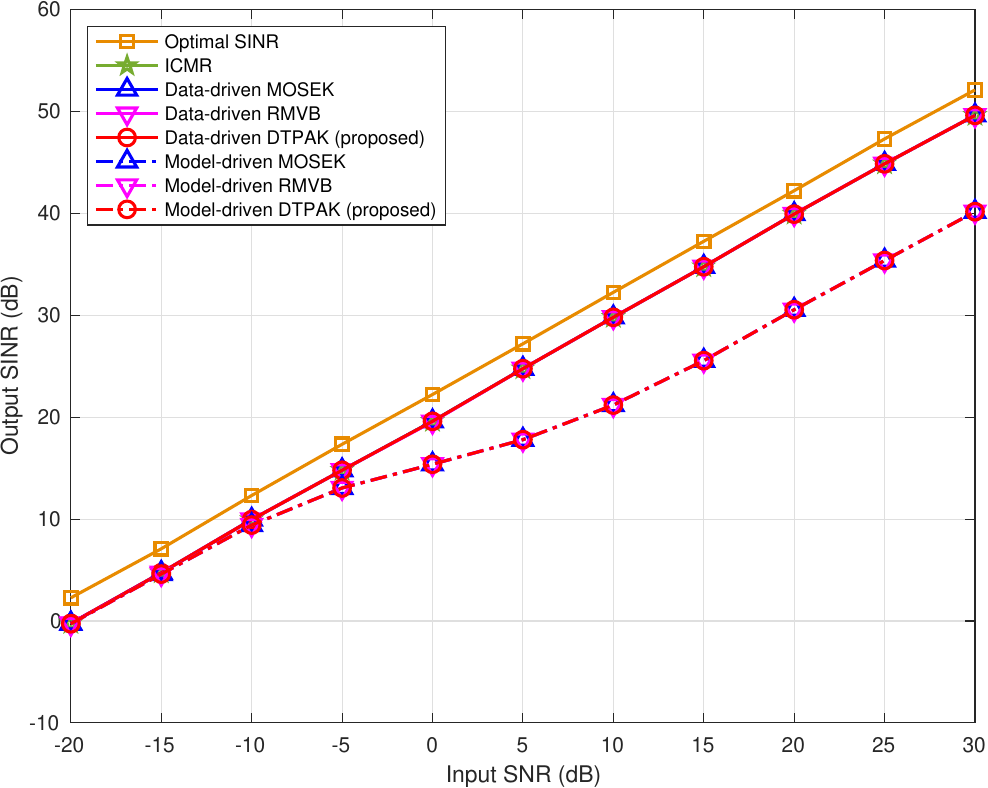}
\par\end{centering}
\caption{\label{fig:SINR vs SNR-3}Output SINR versus input SNR in the case
of incoherent local scattering mismatch.}
\end{figure}

\begin{figure}[tbh]
\begin{centering}
\includegraphics[width=0.8\columnwidth]{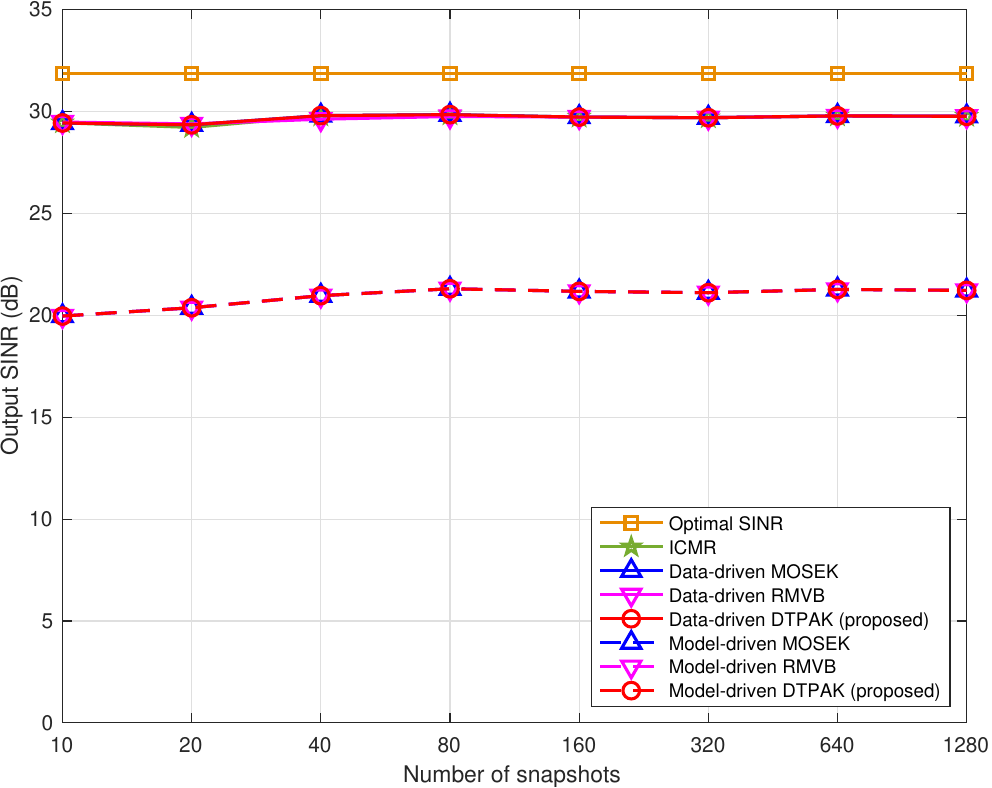}
\par\end{centering}
\caption{\label{fig:SINR vs sample-3}Output SINR versus snapshot number in
the case of incoherent local scattering mismatch.}
\end{figure}

\section{Conclusion\label{sec:Conclusion}}

In this paper, we have revisited the RAB problem and established the
primal-dual equivalence between the reconstruction-based (data-driven)
and worst-case-constrained (model-driven) formulations. The negative
effect of desired signal cancellation can thus be mitigated for the
worst-case-constrained beamformer by substituting appropriate problem
parameters. This equivalence leads to a unified formulation, for which
we have derived a closed-form solution. Unlike the solver-based method
MOSEK and the Lagrange-equation-based algorithm RMVB, the proposed
solution scheme, named DTPAK, consists of three stages: Diagonalization
Transform, Phase Alignment, and KKT Solution. By introducing an auxiliary
variable that fuses the primal variable and Lagrangian multiplier,
we can reduce the KKT system to a single nonlinear equation solvable
via bisection, from which all system variables can be recovered. In
addition to the closed-form solution, we have provided the existence
and uniqueness conditions for the unified RAB problem, which were
not revealed in previous research. Numerical simulations have shown
the successful recovery of the classical ICMR beamforming solution
and verified the optimality and computational efficiency of DTPAK. 

{\small

\bibliographystyle{IEEEtranN}
\bibliography{IEEEabrv,LichengZhao}

}

\clearpage

\setcounter{page}{1}
\begin{appendices}

{\twocolumn[\centering \Huge{Supplementary Document to ``On the
Closed-Form Solution for Robust Adaptive Beamforming''}]}

\section{\label{sec:Proof-of-Lemma transform}Proof of Lemma \ref{lem:transform}}
\begin{IEEEproof}
We transform the quadratic constraint first. With the definitions
of $\bar{\mathbf{R}}$ and $\bar{\mathbf{a}}$, we have 
\begin{equation}
\begin{aligned} & \left(\mathbf{a}+\boldsymbol{\delta}_{\bot}\right)^{H}\hat{\mathbf{R}}_{i+n}\left(\mathbf{a}+\boldsymbol{\delta}_{\bot}\right)\leq\mathbf{a}^{H}\hat{\mathbf{R}}_{i+n}\mathbf{a}\\
\Longleftrightarrow & \boldsymbol{\delta}_{\bot}^{H}\hat{\mathbf{R}}_{i+n}\boldsymbol{\delta}_{\bot}+2\mathrm{Re}\left[\mathbf{a}^{H}\hat{\mathbf{R}}_{i+n}\boldsymbol{\delta}_{\bot}\right]\leq0\\
\Longleftrightarrow & \mathbf{h}^{H}\mathbf{S}_{a}^{H}\hat{\mathbf{R}}_{i+n}\mathbf{S}_{a}\mathbf{h}+2\mathrm{Re}\left[\mathbf{a}^{H}\hat{\mathbf{R}}_{i+n}\mathbf{S}_{a}\mathbf{h}\right]\leq0\\
\Longleftrightarrow & \mathbf{h}^{H}\bar{\mathbf{R}}\mathbf{h}+2\mathrm{Re}\left[\bar{\mathbf{a}}^{H}\mathbf{h}\right]+\bar{\mathbf{a}}^{H}\bar{\mathbf{R}}^{-1}\bar{\mathbf{a}}\leq\bar{\mathbf{a}}^{H}\bar{\mathbf{R}}^{-1}\bar{\mathbf{a}}\\
\Longleftrightarrow & \left\Vert \bar{\mathbf{R}}^{1/2}\mathbf{h}+\bar{\mathbf{R}}^{-1/2}\bar{\mathbf{a}}\right\Vert _{2}\leq\sqrt{\bar{\mathbf{a}}^{H}\bar{\mathbf{R}}^{-1}\bar{\mathbf{a}}}\triangleq\varepsilon.
\end{aligned}
\end{equation}
Let $\mathbf{g}=\bar{\mathbf{R}}^{1/2}\mathbf{h}+\bar{\mathbf{R}}^{-1/2}\bar{\mathbf{a}}$.
Then we can solve for $\mathbf{h}$: $\mathbf{h}=\bar{\mathbf{R}}^{-1/2}\left(\mathbf{g}-\bar{\mathbf{R}}^{-1/2}\bar{\mathbf{a}}\right)$.
With the definitions of $\mathbf{f}$ and $\mathbf{F}$, it holds
that $\mathbf{a}+\boldsymbol{\delta}_{\bot}=\mathbf{a}+\mathbf{S}_{a}\mathbf{h}=\mathbf{a}-\mathbf{S}_{a}\bar{\mathbf{R}}^{-1}\bar{\mathbf{a}}+\mathbf{S}_{a}\bar{\mathbf{R}}^{-1/2}\mathbf{g}=\mathbf{f}+\mathbf{F}\mathbf{g}$.
Thus, 
\begin{equation}
\left(\mathbf{a}+\boldsymbol{\delta}_{\bot}\right)^{H}\mathbf{\hat{R}}_{i+n}^{-1}\left(\mathbf{a}+\boldsymbol{\delta}_{\bot}\right)=\left(\mathbf{f}+\mathbf{F}\mathbf{g}\right)^{H}\hat{\mathbf{R}}_{i+n}^{-1}\left(\mathbf{f}+\mathbf{F}\mathbf{g}\right).
\end{equation}
\end{IEEEproof}

\section{\label{sec:Proof-of-Theorem qcqp2socp}Proof of Theorem \ref{thm:qcqp2socp}}
\begin{IEEEproof}
Note that any $\mathbf{w}\in\mathbb{C}^{N}$ can be decomposed as
$\mathbf{w}=\beta\bar{\mathbf{w}}$ where $\beta\geq0$, so we can
reparameterize the dual problem without loss of optimality. In this
polar representation, the new objective function becomes
\begin{equation}
\begin{aligned} & g\left(\beta,\bar{\mathbf{w}}\right)=-\frac{1}{4}\left(\beta\bar{\mathbf{w}}\right)^{H}\hat{\mathbf{R}}_{i+n}\left(\beta\bar{\mathbf{w}}\right)\\
 & -\varepsilon\left\Vert \mathbf{F}^{H}\left(\beta\bar{\mathbf{w}}\right)\right\Vert _{2}+\mathrm{Re}\left[\mathbf{\left(\beta\bar{\mathbf{w}}\right)}^{H}\mathbf{f}\right]\\
 & =-\frac{1}{4}\left(\bar{\mathbf{w}}^{H}\hat{\mathbf{R}}_{i+n}\bar{\mathbf{w}}\right)\beta^{2}-\left(\varepsilon\left\Vert \mathbf{F}^{H}\bar{\mathbf{w}}\right\Vert _{2}-\mathrm{Re}\left[\bar{\mathbf{w}}^{H}\mathbf{f}\right]\right)\beta.
\end{aligned}
\end{equation}
It is obvious that $g\left(\beta,\bar{\mathbf{w}}\right)=0$ at $\bar{\mathbf{w}}=\mathbf{0}$.
It can also be shown that $g\left(\beta,\mathbf{a}\right)=-\frac{1}{4}\left(\mathbf{a}^{H}\hat{\mathbf{R}}_{i+n}\mathbf{a}\right)\beta^{2}+\left(\mathbf{a}^{H}\mathbf{a}\right)\beta$
at $\bar{\mathbf{w}}=\mathbf{a}$. This is because $\mathbf{F}^{H}\mathbf{a}=\bar{\mathbf{R}}^{-1/2}\mathbf{S}_{a}^{H}\mathbf{a}=\mathbf{0}$
and $\mathrm{Re}\left[\mathbf{a}^{H}\mathbf{f}\right]=\mathrm{Re}\left[\mathbf{a}^{H}\mathbf{a}-\mathbf{a}^{H}\mathbf{S}_{a}\bar{\mathbf{R}}^{-1}\bar{\mathbf{a}}\right]=\mathbf{a}^{H}\mathbf{a}$.
Maximizing $g\left(\beta,\bar{\mathbf{w}}\right)$ with respect to
$\beta$ over $\beta\geq0$ and $\bar{\mathbf{w}}\neq\mathbf{0}$,
we obtain the optimal $\beta$ as $\left[\frac{2\left(\mathrm{Re}\left[\bar{\mathbf{w}}^{H}\mathbf{f}\right]-\varepsilon\left\Vert \mathbf{F}^{H}\bar{\mathbf{w}}\right\Vert _{2}\right)}{\bar{\mathbf{w}}^{H}\hat{\mathbf{R}}_{i+n}\bar{\mathbf{w}}}\right]_{+}$
($\left[x\right]_{+}=\max\left(x,0\right)$). When $\mathrm{Re}\left[\bar{\mathbf{w}}^{H}\mathbf{f}\right]-\varepsilon\left\Vert \mathbf{F}^{H}\bar{\mathbf{w}}\right\Vert _{2}\leq0$,
the optimal $\beta$ is $0$ and $\max_{\beta}g\left(\beta,\bar{\mathbf{w}}\right)=0$
for all $\bar{\mathbf{w}}$. But we have $g\left(\frac{2\mathbf{a}^{H}\mathbf{a}}{\mathbf{a}^{H}\hat{\mathbf{R}}_{i+n}\mathbf{a}},\mathbf{a}\right)=\frac{\left(\mathbf{a}^{H}\mathbf{a}\right)^{2}}{\mathbf{a}^{H}\hat{\mathbf{R}}_{i+n}\mathbf{a}}>0$,
so the optimal $\bar{\mathbf{w}}$ cannot be $\mathbf{0}$ or lie
in $\left\{ \bar{\mathbf{w}}|\mathrm{Re}\left[\bar{\mathbf{w}}^{H}\mathbf{f}\right]-\varepsilon\left\Vert \mathbf{F}^{H}\bar{\mathbf{w}}\right\Vert _{2}\leq0\right\} $.
Hence, $\beta^{\star}$ can be simplified as $\frac{2\left(\mathrm{Re}\left[\bar{\mathbf{w}}^{H}\mathbf{f}\right]-\varepsilon\left\Vert \mathbf{F}^{H}\bar{\mathbf{w}}\right\Vert _{2}\right)}{\bar{\mathbf{w}}^{H}\hat{\mathbf{R}}_{i+n}\bar{\mathbf{w}}}$
and $\max_{\beta}g\left(\beta,\bar{\mathbf{w}}\right)=\frac{\left(\mathrm{Re}\left[\bar{\mathbf{w}}^{H}\mathbf{f}\right]-\varepsilon\left\Vert \mathbf{F}^{H}\bar{\mathbf{w}}\right\Vert _{2}\right)^{2}}{\bar{\mathbf{w}}^{H}\hat{\mathbf{R}}_{i+n}\bar{\mathbf{w}}}$.
As a result, the dual problem is simplified as 
\begin{equation}
\begin{aligned} & \underset{\bar{\mathbf{w}}\in\mathbb{C}^{N}}{\mathsf{maximize}} &  & \frac{\left(\mathrm{Re}\left[\bar{\mathbf{w}}^{H}\mathbf{f}\right]-\varepsilon\left\Vert \mathbf{F}^{H}\bar{\mathbf{w}}\right\Vert _{2}\right)^{2}}{\bar{\mathbf{w}}^{H}\hat{\mathbf{R}}_{i+n}\bar{\mathbf{w}}}\\
 & \mathsf{subject\thinspace to} &  & \mathrm{Re}\left[\bar{\mathbf{w}}^{H}\mathbf{f}\right]-\varepsilon\left\Vert \mathbf{F}^{H}\bar{\mathbf{w}}\right\Vert _{2}>0,
\end{aligned}
\end{equation}
where the constraint automatically excludes the solution $\bar{\mathbf{w}}=\mathbf{0}$.
Note that the numerator and denominator of the objective are second-order
homogeneous and the optimal solution is scale invariant, so we can
normalize the numerator to $1$ without loss of optimality, which
yields the desired formulation: 
\begin{equation}
\begin{aligned} & \underset{\bar{\mathbf{w}}\in\mathbb{C}^{N}}{\mathsf{minimize}} &  & \bar{\mathbf{w}}^{H}\hat{\mathbf{R}}_{i+n}\bar{\mathbf{w}}\\
 & \mathsf{subject\thinspace to} &  & \mathrm{Re}\left[\bar{\mathbf{w}}^{H}\mathbf{f}\right]-\varepsilon\left\Vert \mathbf{F}^{H}\bar{\mathbf{w}}\right\Vert _{2}=1.
\end{aligned}
\end{equation}
\end{IEEEproof}

\section{\label{sec:Proof-of-Lemma ellip2standard soc}Proof of Lemma \ref{lem: ellip2standard soc}}
\begin{IEEEproof}
The definitions of $\mathbf{B}=\mathrm{Diag}\left(\mathbf{d}\right)\mathbf{V}_{1}^{H}$
and $\mathbf{w}=\mathbf{V}_{1}\left[\mathrm{Diag}\left(\mathbf{d}\right)\right]^{-1}\mathbf{\tilde{w}}_{1}+\mathbf{V}_{2}\tilde{\mathbf{w}}_{2}$
yield 
\begin{equation}
\begin{aligned} & \left\Vert \mathbf{B}\mathbf{w}\right\Vert _{2}=\Bigl\Vert\mathrm{Diag}\left(\mathbf{d}\right)\mathbf{V}_{1}^{H}\Bigl(\mathbf{V}_{1}\left[\mathrm{Diag}\left(\mathbf{d}\right)\right]^{-1}\tilde{\mathbf{w}}_{1}\\
 & +\mathbf{V}_{2}\tilde{\mathbf{w}}_{2}\Bigr)\Bigr\Vert_{2}=\left\Vert \tilde{\mathbf{w}}_{1}\right\Vert _{2}
\end{aligned}
\end{equation}
and
\begin{equation}
\begin{aligned}\mathbf{w}^{H}\mathbf{a} & =\left(\mathbf{V}_{1}\left[\mathrm{Diag}\left(\mathbf{d}\right)\right]^{-1}\mathbf{\tilde{w}}_{1}+\mathbf{V}_{2}\tilde{\mathbf{w}}_{2}\right)^{H}\mathbf{a}\\
 & =\tilde{\mathbf{w}}_{1}^{H}\tilde{\mathbf{a}}_{1}+\tilde{\mathbf{w}}_{2}^{H}\tilde{\mathbf{a}}_{2}
\end{aligned}
\end{equation}
where $\tilde{\mathbf{a}}_{1}=\left[\mathrm{Diag}\left(\mathbf{d}\right)\right]^{-1}\mathbf{V}_{1}^{H}\mathbf{a}$
and $\tilde{\mathbf{a}}_{2}=\mathbf{V}_{2}^{H}\mathbf{a}$. Thus,
\begin{equation}
\begin{aligned} & \mathbf{w}^{H}\mathbf{a}\geq\varepsilon\left\Vert \mathbf{B}\mathbf{w}\right\Vert _{2}+1\\
\Longleftrightarrow & \tilde{\mathbf{w}}_{1}^{H}\tilde{\mathbf{a}}_{1}+\tilde{\mathbf{w}}_{2}^{H}\tilde{\mathbf{a}}_{2}\geq\varepsilon\left\Vert \tilde{\mathbf{w}}_{1}\right\Vert _{2}+1\\
\Longleftrightarrow & \tilde{\mathbf{w}}^{H}\tilde{\mathbf{a}}\geq\varepsilon\left\Vert \tilde{\mathbf{w}}_{1}\right\Vert _{2}+1\\
 & \mathrm{Im}\left[\mathbf{w}^{H}\mathbf{a}\right]=0\\
\Longleftrightarrow & \mathrm{Im}\left[\tilde{\mathbf{w}}_{1}^{H}\tilde{\mathbf{a}}_{1}+\tilde{\mathbf{w}}_{2}^{H}\tilde{\mathbf{a}}_{2}\right]=0\\
\Longleftrightarrow & \mathrm{Im}\left[\tilde{\mathbf{w}}^{H}\tilde{\mathbf{a}}\right]=0,
\end{aligned}
\end{equation}
where $\tilde{\mathbf{w}}=\left[\mathbf{\tilde{w}}_{1}^{T},\tilde{\mathbf{w}}_{2}^{T}\right]^{T}$
and $\tilde{\mathbf{a}}=\left[\mathbf{\tilde{a}}_{1}^{T},\tilde{\mathbf{a}}_{2}^{T}\right]^{T}$.
We further denote $\tilde{\mathbf{B}}=\left[\mathbf{V}_{1}\left[\mathrm{Diag}\left(\mathbf{d}\right)\right]^{-1},\mathbf{V}_{2}\right]$
and as a result, $\mathbf{w}=\tilde{\mathbf{B}}\tilde{\mathbf{w}}$
and $\mathbf{w}^{H}\mathbf{R}\mathbf{w}=\tilde{\mathbf{w}}^{H}\tilde{\mathbf{B}}^{H}\mathbf{R}\tilde{\mathbf{B}}\tilde{\mathbf{w}}=\tilde{\mathbf{w}}^{H}\tilde{\mathbf{R}}\tilde{\mathbf{w}}$,
where $\tilde{\mathbf{R}}=\tilde{\mathbf{B}}^{H}\mathbf{R}\tilde{\mathbf{B}}$.
Hence, the unified formulation is equivalently recast as 

\begin{equation}
\begin{aligned} & \underset{\tilde{\mathbf{w}}\in\mathbb{C}^{N}}{\mathsf{minimize}} &  & \tilde{\mathbf{w}}^{H}\tilde{\mathbf{R}}\tilde{\mathbf{w}}\\
 & \mathsf{subject\ to} &  & \tilde{\mathbf{w}}^{H}\tilde{\mathbf{a}}\geq\varepsilon\left\Vert \tilde{\mathbf{w}}_{1}\right\Vert _{2}+1\\
 &  &  & \mathrm{Im}\left[\tilde{\mathbf{w}}^{H}\tilde{\mathbf{a}}\right]=0\\
 &  &  & \tilde{\mathbf{w}}=\left[\mathbf{\tilde{w}}_{1}^{T},\tilde{\mathbf{w}}_{2}^{T}\right]^{T}.
\end{aligned}
\end{equation}
\end{IEEEproof}

\section{\label{sec:Proof-of-Theorem obj decouple}Proof of Theorem \ref{thm:obj decouple}}
\begin{IEEEproof}
It can be verified that the product of the three matrices is {\footnotesize{

\begin{equation}
\begin{aligned} & \left[\begin{array}{cc}
\mathbf{U}_{1}^{H} & -\mathbf{U}_{1}^{H}\tilde{\mathbf{R}}_{12}\tilde{\mathbf{R}}_{22}^{-1}\\
\mathbf{0} & \mathbf{U}_{2}^{H}
\end{array}\right]\left[\begin{array}{cc}
\tilde{\mathbf{R}}_{11} & \tilde{\mathbf{R}}_{12}\\
\tilde{\mathbf{R}}_{21} & \tilde{\mathbf{R}}_{22}
\end{array}\right]\left[\begin{array}{cc}
\mathbf{U}_{1} & \mathbf{0}\\
-\tilde{\mathbf{R}}_{22}^{-1}\tilde{\mathbf{R}}_{21}\mathbf{U}_{1} & \mathbf{U}_{2}
\end{array}\right]\\
= & \left[\begin{array}{cc}
\mathbf{U}_{1}^{H}\tilde{\mathbf{R}}_{11}-\mathbf{U}_{1}^{H}\tilde{\mathbf{R}}_{12}\tilde{\mathbf{R}}_{22}^{-1}\tilde{\mathbf{R}}_{21} & \mathbf{0}\\
\mathbf{U}_{2}^{H}\tilde{\mathbf{R}}_{21} & \mathbf{U}_{2}^{H}\tilde{\mathbf{R}}_{22}
\end{array}\right]\left[\begin{array}{cc}
\mathbf{U}_{1} & \mathbf{0}\\
-\tilde{\mathbf{R}}_{22}^{-1}\tilde{\mathbf{R}}_{21}\mathbf{U}_{1} & \mathbf{U}_{2}
\end{array}\right]\\
= & \left[\begin{array}{cc}
\mathbf{U}_{1}^{H}\tilde{\mathbf{S}}_{11}\mathbf{U}_{1} & \mathbf{0}\\
\mathbf{0} & \mathbf{U}_{2}^{H}\tilde{\mathbf{R}}_{22}\mathbf{U}_{2}
\end{array}\right].
\end{aligned}
\end{equation}
}}Consequently, the objective is fully decoupled: {\footnotesize{
\begin{equation}
\begin{aligned}\tilde{\mathbf{w}}^{H}\tilde{\mathbf{R}}\tilde{\mathbf{w}} & =\left[\begin{array}{cc}
\tilde{\mathbf{w}}_{1}^{H} & \tilde{\mathbf{w}}_{2}^{H}\end{array}\right]\left[\begin{array}{cc}
\tilde{\mathbf{R}}_{11} & \tilde{\mathbf{R}}_{12}\\
\tilde{\mathbf{R}}_{21} & \tilde{\mathbf{R}}_{22}
\end{array}\right]\left[\begin{array}{c}
\tilde{\mathbf{w}}_{1}\\
\tilde{\mathbf{w}}_{2}
\end{array}\right]\\
 & =\left[\begin{array}{cc}
\mathbf{v}_{1}^{H} & \mathbf{v}_{2}^{H}\end{array}\right]\left[\begin{array}{cc}
\mathbf{U}_{1}^{H} & -\mathbf{U}_{1}^{H}\tilde{\mathbf{R}}_{12}\tilde{\mathbf{R}}_{22}^{-1}\\
\mathbf{0} & \mathbf{U}_{2}^{H}
\end{array}\right]\cdot\\
 & \hspace{0.3cm}\left[\begin{array}{cc}
\tilde{\mathbf{R}}_{11} & \tilde{\mathbf{R}}_{12}\\
\tilde{\mathbf{R}}_{21} & \tilde{\mathbf{R}}_{22}
\end{array}\right]\left[\begin{array}{cc}
\mathbf{U}_{1} & \mathbf{0}\\
-\tilde{\mathbf{R}}_{22}^{-1}\tilde{\mathbf{R}}_{21}\mathbf{U}_{1} & \mathbf{U}_{2}
\end{array}\right]\left[\begin{array}{c}
\mathbf{v}_{1}\\
\mathbf{v}_{2}
\end{array}\right]\\
 & =\left[\begin{array}{cc}
\mathbf{v}_{1}^{H} & \mathbf{v}_{2}^{H}\end{array}\right]\left[\begin{array}{cc}
\mathbf{U}_{1}^{H}\tilde{\mathbf{S}}_{11}\mathbf{U}_{1} & \mathbf{0}\\
\mathbf{0} & \mathbf{U}_{2}^{H}\tilde{\mathbf{R}}_{22}\mathbf{U}_{2}
\end{array}\right]\left[\begin{array}{c}
\mathbf{v}_{1}\\
\mathbf{v}_{2}
\end{array}\right]\\
 & =\mathbf{v}_{1}^{H}\mathbf{\boldsymbol{\Lambda}}_{1}\mathbf{v}_{1}+\mathbf{v}_{2}^{H}\mathbf{\boldsymbol{\Lambda}}_{2}\mathbf{v}_{2}.
\end{aligned}
\end{equation}
}}

By definition of the transformation in \eqref{eq:lin trans}, $\tilde{\mathbf{w}}_{1}=\mathbf{U}_{1}\mathbf{v}_{1}$
and $\tilde{\mathbf{w}}_{2}=-\tilde{\mathbf{R}}_{22}^{-1}\tilde{\mathbf{R}}_{21}\mathbf{U}_{1}\mathbf{v}_{1}+\mathbf{U}_{2}\mathbf{v}_{2}$.
Thus, 
\begin{equation}
\left\Vert \tilde{\mathbf{w}}_{1}\right\Vert _{2}=\left\Vert \mathbf{U}_{1}\mathbf{v}_{1}\right\Vert _{2}=\left\Vert \mathbf{v}_{1}\right\Vert _{2}
\end{equation}
and 
\begin{equation}
\begin{aligned} & \tilde{\mathbf{w}}^{H}\tilde{\mathbf{a}}=\tilde{\mathbf{w}}_{1}^{H}\tilde{\mathbf{a}}_{1}+\tilde{\mathbf{w}}_{2}^{H}\tilde{\mathbf{a}}_{2}\\
= & \left(\mathbf{U}_{1}\mathbf{v}_{1}\right)^{H}\tilde{\mathbf{a}}_{1}+\left(-\tilde{\mathbf{R}}_{22}^{-1}\tilde{\mathbf{R}}_{21}\mathbf{U}_{1}\mathbf{v}_{1}+\mathbf{U}_{2}\mathbf{v}_{2}\right)^{H}\tilde{\mathbf{a}}_{2}\\
= & \mathbf{v}_{1}^{H}\mathbf{U}_{1}^{H}\tilde{\mathbf{a}}_{1}-\mathbf{v}_{1}^{H}\mathbf{U}_{1}^{H}\tilde{\mathbf{R}}_{21}^{H}\tilde{\mathbf{R}}_{22}^{-1}\tilde{\mathbf{a}}_{2}+\mathbf{v}_{2}^{H}\mathbf{U}_{2}^{H}\tilde{\mathbf{a}}_{2}\\
= & \mathbf{v}_{1}^{H}\left[\mathbf{U}_{1}^{H}\left(\tilde{\mathbf{a}}_{1}-\tilde{\mathbf{R}}_{21}^{H}\tilde{\mathbf{R}}_{22}^{-1}\tilde{\mathbf{a}}_{2}\right)\right]+\mathbf{v}_{2}^{H}\mathbf{U}_{2}^{H}\tilde{\mathbf{a}}_{2}\\
= & \mathbf{v}_{1}^{H}\mathbf{b}_{1}+\mathbf{v}_{2}^{H}\mathbf{b}_{2}
\end{aligned}
\end{equation}
where $\mathbf{b}_{1}=\mathbf{U}_{1}^{H}\left(\tilde{\mathbf{a}}_{1}-\tilde{\mathbf{R}}_{21}^{H}\tilde{\mathbf{R}}_{22}^{-1}\tilde{\mathbf{a}}_{2}\right)$
and $\mathbf{b}_{2}=\mathbf{U}_{2}^{H}\tilde{\mathbf{a}}_{2}$. The
constraints become 
\begin{equation}
\begin{aligned} & \tilde{\mathbf{w}}^{H}\tilde{\mathbf{a}}\geq\varepsilon\left\Vert \tilde{\mathbf{w}}_{1}\right\Vert _{2}+1\\
\Longleftrightarrow & \mathbf{v}_{1}^{H}\mathbf{b}_{1}+\mathbf{v}_{2}^{H}\mathbf{b}_{2}\geq\varepsilon\left\Vert \mathbf{v}_{1}\right\Vert _{2}+1\\
\Longleftrightarrow & \mathbf{v}^{H}\mathbf{b}\geq\varepsilon\left\Vert \mathbf{v}_{1}\right\Vert _{2}+1\\
 & \mathrm{Im}\left[\tilde{\mathbf{w}}^{H}\tilde{\mathbf{a}}\right]=0\\
\Longleftrightarrow & \mathrm{Im}\left[\mathbf{v}_{1}^{H}\mathbf{b}_{1}+\mathbf{v}_{2}^{H}\mathbf{b}_{2}\right]=0\\
\Longleftrightarrow & \mathrm{Im}\left[\mathbf{v}^{H}\mathbf{b}\right]=0.
\end{aligned}
\end{equation}
where $\mathbf{v}=\left[\mathbf{v}_{1}^{T},\mathbf{v}_{2}^{T}\right]^{T}$
and $\mathbf{b}=\left[\mathbf{b}_{1}^{T},\mathbf{b}_{2}^{T}\right]^{T}$.
The objective can also simplified as $\tilde{\mathbf{w}}^{H}\tilde{\mathbf{R}}\tilde{\mathbf{w}}=\mathbf{v}_{1}^{H}\mathbf{\boldsymbol{\Lambda}}_{1}\mathbf{v}_{1}+\mathbf{v}_{2}^{H}\mathbf{\boldsymbol{\Lambda}}_{2}\mathbf{v}_{2}=\mathbf{v}^{H}\boldsymbol{\Lambda}\mathbf{v}$
where $\boldsymbol{\Lambda}=\mathrm{BlkDiag}\left(\left\{ \mathbf{\boldsymbol{\Lambda}}_{1},\mathbf{\boldsymbol{\Lambda}}_{2}\right\} \right)$.
To this end, problem \eqref{eq:opt prob 1 nonfull} is equivalently
rewritten as 
\begin{equation}
\begin{aligned} & \underset{\mathbf{v}\in\mathbb{C}^{N}}{\mathsf{minimize}} &  & \mathbf{v}^{H}\boldsymbol{\Lambda}\mathbf{v}\\
 & \mathsf{subject\ to} &  & \mathbf{v}^{H}\mathbf{b}\geq\varepsilon\left\Vert \mathbf{v}_{1}\right\Vert _{2}+1\\
 &  &  & \mathrm{Im}\left[\mathbf{v}^{H}\mathbf{b}\right]=0\\
 &  &  & \mathbf{v}=\left[\mathbf{v}_{1}^{T},\mathbf{v}_{2}^{T}\right]^{T}.
\end{aligned}
\end{equation}
\end{IEEEproof}

\section{\label{sec:Proof-of-Lemma phase align}Proof of Lemma \ref{lem:phase align}}
\begin{IEEEproof}
The proof is given by contradiction. Suppose the optimal solution
$\mathbf{v}^{\star}$ does not align on the phase of $\mathbf{b}$,
i.e., $\arg\left(\mathbf{v}^{\star}\right)\neq\arg\left(\mathbf{b}\right)$.
Note that $\mathbf{b}\neq\mathbf{0}$. Since $\mathbf{v}^{\star}$
is feasible, $\mathrm{Im}\left[\left(\mathbf{v}^{\star}\right)^{H}\mathbf{b}\right]=0$
and $0<\left(\mathbf{v}^{\star}\right)^{H}\mathbf{b}=\mathrm{Re}\left[\left(\mathbf{v}^{\star}\right)^{H}\mathbf{b}\right]$.
Note that for any nonzero $\mathbf{x},\mathbf{y}\in\mathbb{C}^{N}$,
$\mathrm{Re}\left[\mathbf{x}^{H}\mathbf{y}\right]\leq\sum_{n=1}^{N}\left|x_{n}\right|\left|y_{n}\right|$
holds true and the equality is achieved if and only if $\arg\left(\mathbf{x}\right)=\arg\left(\mathbf{y}\right)$.
With $\arg\left(\mathbf{v}^{\star}\right)\neq\arg\left(\mathbf{b}\right)$,
$\mathrm{Re}\left[\left(\mathbf{v}^{\star}\right)^{H}\mathbf{b}\right]<\sum_{n=1}^{N}\left|v_{n}^{\star}\right|\left|b_{n}\right|=\left|\mathbf{v}^{\star}\right|^{T}\left|\mathbf{b}\right|$.
Thus, we construct 
\begin{equation}
\tilde{\mathbf{v}}^{\star}=\alpha\left|\mathbf{v}^{\star}\right|\odot\exp\left(j\arg\left(\mathbf{b}\right)\right),
\end{equation}
where $\alpha=\mathrm{Re}\left[\left(\mathbf{v}^{\star}\right)^{H}\mathbf{b}\right]/\left(\left|\mathbf{v}^{\star}\right|^{T}\left|\mathbf{b}\right|\right)<1$. 

Obviously, $\mathrm{Im}\left[\left(\tilde{\mathbf{v}}^{\star}\right)^{H}\mathbf{b}\right]=\mathrm{Im}\left[\sum_{n=1}^{N}\alpha\left|v_{n}^{\star}\right|\exp\left(-j\arg\left(b_{n}\right)\right)b_{n}\right]=\mathrm{Im}\left[\alpha\sum_{n=1}^{N}\left|v_{n}^{\star}\right|\left|b_{n}\right|\right]=0$.
When $\mathbf{A}$ has full column rank, $\left(\tilde{\mathbf{v}}^{\star}\right)^{H}\mathbf{b}=\alpha\sum_{n=1}^{N}\left|v_{n}^{\star}\right|\left|b_{n}\right|=\frac{\mathrm{Re}\left[\left(\mathbf{v}^{\star}\right)^{H}\mathbf{b}\right]}{\left|\mathbf{v}^{\star}\right|^{T}\left|\mathbf{b}\right|}\cdot\left|\mathbf{v}^{\star}\right|^{T}\left|\mathbf{b}\right|=\mathrm{Re}\left[\left(\mathbf{v}^{\star}\right)^{H}\mathbf{b}\right]\geq\varepsilon\left\Vert \mathbf{v}^{\star}\right\Vert _{2}+1>\varepsilon\cdot\alpha\left\Vert \mathbf{v}^{\star}\right\Vert _{2}+1=\varepsilon\left\Vert \tilde{\mathbf{v}}^{\star}\right\Vert _{2}+1$.
Similarly, when the column rank of $\mathbf{A}$ is deficient, $\left(\tilde{\mathbf{v}}^{\star}\right)^{H}\mathbf{b}=\mathrm{Re}\left[\left(\mathbf{v}^{\star}\right)^{H}\mathbf{b}\right]\geq\varepsilon\left\Vert \mathbf{v}_{1}^{\star}\right\Vert _{2}+1>\varepsilon\cdot\alpha\left\Vert \mathbf{v}_{1}^{\star}\right\Vert _{2}+1=\varepsilon\left\Vert \tilde{\mathbf{v}}_{1}^{\star}\right\Vert _{2}+1$
because $\mathbf{v}_{1}^{\star}$ and $\tilde{\mathbf{v}}_{1}^{\star}$
are the first $r$ elements of $\mathbf{v}^{\star}$ and $\tilde{\mathbf{v}}^{\star}$,
respectively. The feasibility of $\tilde{\mathbf{v}}^{\star}$ is
thus verified. Nevertheless, $\tilde{\mathbf{v}}^{\star}$ can achieve
an even lower objective, i.e., 
\begin{equation}
\left(\tilde{\mathbf{v}}^{\star}\right)^{H}\boldsymbol{\Lambda}\tilde{\mathbf{v}}^{\star}=\sum_{n=1}^{N}\lambda_{n}\cdot\alpha^{2}\left|v_{n}^{\star}\right|^{2}<\sum_{n=1}^{N}\lambda_{n}\left|v_{n}^{\star}\right|^{2}=\left(\mathbf{v}^{\star}\right)^{H}\boldsymbol{\Lambda}\mathbf{v}^{\star},
\end{equation}
which contradicts the optimality assumption of $\mathbf{v}^{\star}$.
Therefore, the phase of $\mathbf{v}^{\star}$ must align with $\mathbf{b}$. 
\end{IEEEproof}

\section{\label{sec:Proof-of-Lemma exist eqv}Proof of Lemma \ref{lem:exist eqv}}
\begin{IEEEproof}
First we prove sufficiency: we set $\mathbf{x}=\rho\mathbf{y}/\left\Vert \mathbf{y}\right\Vert _{2}$
where $\rho\geq0$. Given this $\mathbf{x}$, we have $\left\Vert \mathbf{x}\right\Vert _{2}=\rho$,
$\mathbf{x}^{T}\mathbf{y}=\rho\left\Vert \mathbf{y}\right\Vert _{2}$.
Since $\left\Vert \mathbf{y}\right\Vert _{2}>\varepsilon$, we can
set $\rho=\frac{1}{\left\Vert \mathbf{y}\right\Vert _{2}-\varepsilon}$
and obtain 
\begin{equation}
\begin{aligned}\mathbf{x}^{T}\mathbf{y}= & \left\Vert \mathbf{y}\right\Vert _{2}/\left(\left\Vert \mathbf{y}\right\Vert _{2}-\varepsilon\right)\\
= & 1+\varepsilon/\left(\left\Vert \mathbf{y}\right\Vert _{2}-\varepsilon\right)\\
= & 1+\varepsilon\rho=1+\varepsilon\left\Vert \mathbf{x}\right\Vert _{2},
\end{aligned}
\end{equation}
which is a special case in $\mathbf{x}^{T}\mathbf{y}\geq\varepsilon\left\Vert \mathbf{x}\right\Vert _{2}+1$.
Next we attend to necessity: assuming $\left\Vert \mathbf{y}\right\Vert _{2}\leq\varepsilon$,
it holds that for any $\mathbf{x}$, 
\begin{equation}
\begin{aligned}\mathbf{x}^{T}\mathbf{y} & \leq\left\Vert \mathbf{y}\right\Vert _{2}\left\Vert \mathbf{x}\right\Vert _{2}\\
 & \leq\varepsilon\left\Vert \mathbf{x}\right\Vert _{2}<\varepsilon\left\Vert \mathbf{x}\right\Vert _{2}+1,
\end{aligned}
\end{equation}
which contradicts the condition $\exists\mathbf{x}$, $\mathbf{x}^{T}\mathbf{y}\geq\varepsilon\left\Vert \mathbf{x}\right\Vert _{2}+1$. 
\end{IEEEproof}

\section{\label{sec:Proof-of-Lemma unique}Proof of Lemma \ref{lem: unique}}
\begin{IEEEproof}
Denote a particular solution of $\mathbf{B}$ as $\mathbf{B}_{0}$.
For any $\mathbf{B}$ satisfying $\mathbf{B}^{H}\mathbf{B}=\mathbf{A}^{H}\mathbf{A}$,
there exists a unitary matrix $\mathbf{V}$ such that $\mathbf{B}=\mathbf{V}\mathbf{B}_{0}$.
Because $\mathbf{B}^{-1}=\left(\mathbf{V}\mathbf{B}_{0}\right)^{-1}=\mathbf{B}_{0}^{-1}\mathbf{V}^{H}$,
the matrix $\left(\mathbf{B}^{-1}\right)^{H}\mathbf{R}\mathbf{B}^{-1}$
is then expressed as 
\begin{equation}
\left(\mathbf{B}^{-1}\right)^{H}\mathbf{R}\mathbf{B}^{-1}=\mathbf{V}\left(\mathbf{B}_{0}^{-1}\right)^{H}\mathbf{R}\mathbf{B}_{0}^{-1}\mathbf{V}^{H}.
\end{equation}

We write a particular EVD solution of $\left(\mathbf{B}_{0}^{-1}\right)^{H}\mathbf{R}\mathbf{B}_{0}^{-1}$
as $\mathbf{U}_{0}\cdot\mathrm{Diag}\left(\boldsymbol{\lambda}\right)\cdot\mathbf{U}_{0}^{H}$
with components of $\boldsymbol{\lambda}$ in decreasing order. In
case of duplicate eigenvalues, the EVD output is unique up to a unitary
rotation in the subspace of the duplicate eigenvalues. Suppose that
the number of distinct eigenvalues in $\left(\mathbf{B}_{0}^{-1}\right)^{H}\mathbf{R}\mathbf{B}_{0}^{-1}$
is $N_{d}$ ($N_{d}\leq N$). We construct a block diagonal unitary
matrix $\mathbf{E}$ as 
\begin{equation}
\mathbf{E}=\mathrm{BlkDiag}\left(\left\{ \mathbf{E}_{n}\right\} _{n=1}^{N_{d}}\right)
\end{equation}
where $\mathbf{E}_{n}$ can be any unitary matrix whose dimension
equals the algebraic multiplicity of the $n$-th distinct eigenvalue.
The matrix $\left(\mathbf{B}^{-1}\right)^{H}\mathbf{R}\mathbf{B}^{-1}$
can be further expressed as
\begin{equation}
\begin{aligned}\left(\mathbf{B}^{-1}\right)^{H}\mathbf{R}\mathbf{B}^{-1}= & \mathbf{V}\left(\mathbf{B}_{0}^{-1}\right)^{H}\mathbf{R}\mathbf{B}_{0}^{-1}\mathbf{V}^{H}\\
= & \mathbf{V}\left[\mathbf{U}_{0}\mathbf{E}\cdot\mathrm{Diag}\left(\boldsymbol{\lambda}\right)\cdot\mathbf{E}^{H}\mathbf{U}_{0}^{H}\right]\mathbf{V}^{H}\\
= & \left(\mathbf{V}\mathbf{U}_{0}\mathbf{E}\right)\cdot\mathrm{Diag}\left(\boldsymbol{\lambda}\right)\cdot\left(\mathbf{V}\mathbf{U}_{0}\mathbf{E}\right)^{H},
\end{aligned}
\end{equation}
so the eigenvectors of this matrix can always be written as $\mathbf{U}=\mathbf{V}\mathbf{U}_{0}\mathbf{E}$. 

When the solution to problem \eqref{eq:opt prob 3 relax} is unique,
the expression of $\mathbf{w}^{\star}$ is written as 
\begin{equation}
\begin{aligned}\mathbf{w}^{\star}= & \mathbf{B}^{-1}\mathbf{U}\left[\mathbf{u}^{\star}\odot\exp\left(j\arg\left(\mathbf{b}\right)\right)\right]\\
= & \mathbf{B}^{-1}\mathbf{U}\left[\mu\mathbf{c}/\left(2\boldsymbol{\lambda}+k\mathbf{1}\right)\odot\exp\left(j\arg\left(\mathbf{b}\right)\right)\right]\\
\overset{(a)}{=} & \mathbf{B}^{-1}\mathbf{U}\left[\mu\mathbf{b}/\left(2\boldsymbol{\lambda}+k\mathbf{1}\right)\right]\\
\overset{(b)}{=} & \mathbf{B}^{-1}\mathbf{U}\cdot\mathrm{BlkDiag}\left(\left\{ \frac{\mu}{2\lambda_{n}+k}\mathbf{I}_{n}\right\} _{n=1}^{N_{d}}\right)\mathbf{b}\\
= & \mathbf{B}^{-1}\mathbf{U}\cdot\mathrm{BlkDiag}\left(\left\{ \frac{\mu}{2\lambda_{n}+k}\mathbf{I}_{n}\right\} _{n=1}^{N_{d}}\right)\left(\mathbf{B}^{-1}\mathbf{U}\right)^{H}\mathbf{a}.
\end{aligned}
\label{eq: w star general form}
\end{equation}
where (a) $\mathbf{c}=\left|\mathbf{b}\right|$ and (b) $\mathbf{I}_{n}$
is the identity matrix whose dimension equals the algebraic multiplicity
of the $n$-th distinct eigenvalue. Note that $\mathbf{B}^{-1}\mathbf{U}$
is invariant of the choice of $\mathbf{V}$: 
\begin{equation}
\mathbf{B}^{-1}\mathbf{U}=\mathbf{B}_{0}^{-1}\mathbf{V}^{H}\mathbf{V}\mathbf{U}_{0}\mathbf{E}=\mathbf{B}_{0}^{-1}\mathbf{U}_{0}\mathbf{E}.
\end{equation}
Then, 

\begin{equation}
\begin{aligned}\mathbf{w}^{\star}= & \mathbf{B}_{0}^{-1}\mathbf{U}_{0}\mathbf{E}\cdot\mathrm{BlkDiag}\left(\left\{ \frac{\mu}{2\lambda_{n}+k}\mathbf{I}_{n}\right\} _{n=1}^{N_{d}}\right)\cdot\\
 & \mathbf{E}^{H}\mathbf{U}_{0}^{H}\left(\mathbf{B}_{0}^{-1}\right)^{H}\mathbf{a}.
\end{aligned}
\end{equation}
Recall that $\mathbf{E}=\mathrm{BlkDiag}\left(\left\{ \mathbf{E}_{n}\right\} _{n=1}^{N_{d}}\right)$,
and we have
\begin{equation}
\begin{aligned} & \mathbf{E}\cdot\mathrm{BlkDiag}\left(\left\{ \frac{\mu}{2\lambda_{n}+k}\mathbf{I}_{n}\right\} _{n=1}^{N_{d}}\right)\mathbf{E}^{H}\\
= & \mathrm{BlkDiag}\left(\left\{ \frac{\mu}{2\lambda_{n}+k}\mathbf{E}_{n}\cdot\mathbf{I}_{n}\cdot\mathbf{E}_{n}^{H}\right\} _{n=1}^{N_{d}}\right)\\
= & \mathrm{BlkDiag}\left(\left\{ \frac{\mu}{2\lambda_{n}+k}\mathbf{I}_{n}\right\} _{n=1}^{N_{d}}\right).
\end{aligned}
\end{equation}
Therefore, 
\begin{equation}
\begin{aligned}\mathbf{w}^{\star}= & \mathbf{B}_{0}^{-1}\mathbf{U}_{0}\cdot\mathrm{BlkDiag}\left(\left\{ \frac{\mu}{2\lambda_{n}+k}\mathbf{I}_{n}\right\} _{n=1}^{N_{d}}\right)\cdot\\
 & \mathbf{U}_{0}^{H}\left(\mathbf{B}_{0}^{-1}\right)^{H}\mathbf{a},
\end{aligned}
\end{equation}
which indicates that the effect of decomposition ambiguity on $\mathbf{w}^{\star}$
is eliminated and that $\mathbf{w}^{\star}$ is unique. 

For the sake of rigor, we discuss the possible impact of the extended
definition of zero-valued phase (cf. Lemma \ref{lem:phase align})
on the uniqueness of the optimal solution. In the second line of \eqref{eq: w star general form},
we deduce that $\mathbf{c}\odot\exp\left(j\arg\left(\mathbf{b}\right)\right)=\mathbf{b}$
with $\mathbf{c}=\left|\mathbf{b}\right|$. This equation always holds
true irrespective of the definition of zero-valued phase. For any
index $j$ such that $b_{j}=0$, no matter what definition is adopted
for the phase, the magnitude $c_{j}$ stays zero and hence the equation
holds trivially. 
\end{IEEEproof}
\end{appendices}
\end{document}